\numberwithin{equation}{section}
\numberwithin{figure}{section}
\newtheorem{thm}{Theorem}[section]
\newtheorem{athm}{Theorem}
\newtheorem{lem}[thm]{Lemma}
\newtheorem{prop}[thm]{Proposition}
\newtheorem{cor}[thm]{Corollary}
\newtheorem*{conj*}{Conjecture}
\theoremstyle{definition}
\newtheorem{rem}[thm]{Remark}
\newtheorem{defn}[thm]{Definition}
\newtheorem{ex}[thm]{Example}
 \newcommand{\BN}{{\mathbb {N}}}
\newcommand{\BQ}{{\mathbb {Q}}}
 \newcommand{\BZ}{{\mathbb {Z}}}
\newcommand{\BSol}{{\mathcal{BS}}}
\newcommand{\incl}[3][right]%
{%
\draw[<-,>=#1 hook] #2 to ($ #2!0.5!#3 $);
\draw[->] ($ #2!0.5!#3 $) to #3;%
}
\newcounter{commentcounter}
\title{On the Boone--Higman Conjecture for groups acting on locally finite trees}
\author{Kai-Uwe Bux}
\address{Faculty of Mathematics, Bielefeld Univeristy, D-33501 Bielefeld, Germany}
\email{bux@math.uni-bielefeld.de}
\author{Claudio Llosa Isenrich}
\address{Faculty of Mathematics, Karlsruhe Institute of Technology, Englerstra\ss e 2, 76131 Karlsruhe, Germany}
\email{claudio.llosa@kit.edu}
\author{Xiaolei Wu}
\address{Shanghai Center for Mathematical Sciences, Jiangwan Campus, Fudan University, No.2005 Songhu Road, Shanghai, 200438, P.R. China}
\email{xiaoleiwu@fudan.edu.cn}
\subjclass[2020]{20E08, 20F10, 20E32}
\keywords{Finiteness properties, Boone--Higman Conjecture, Graphs of groups}
\date{July 2024}
\begin{document}

\begin{abstract}
    We develop a method for proving the Boone--Higman Conjecture for groups acting on locally finite trees. As a consequence, we prove the Boone--Higman Conjecture for all Baumslag--Solitar groups and for all free(finite rank)-by-cyclic groups, solving it in two cases that have been raised explicitly by Belk, Bleak, Matucci and Zaremsky. We also illustrate that our method has applications beyond these cases and may offer a route for proving the Boone--Higman Conjecture for many classes of groups.

\end{abstract}

\maketitle

\section*{Introduction}

A classical result in group theory is the Higman Embedding Theorem \cite{Hig-61}, which says that a group is recursively presentable if and only if it embeds in a finitely presentable group. This naturally raises the question if other algorithmic properties of groups admit similar characterisations. One fundamental property of this kind is the solvability of the word problem for finitely generated groups. The Boone--Higman Conjecture provides the following conjectural characterisation of this property.
\begin{conj*}[{Boone--Higman}]\label{conj:Boone--Higman}
    A finitely generated group has solvable word problem if and only if it embeds in a finitely presented simple group.
\end{conj*}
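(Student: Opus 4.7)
The plan is to split the conjecture into its two directions, treat the easy one as a warm-up, and then propose a strategy for the hard one that fits the paper's declared framework of groups acting on locally finite trees.

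The easy direction is that if a finitely generated group $G$ embeds in a finitely presented simple group $S$, then $G$ has solvable word problem. First I would invoke Kuznetsov's theorem: a finitely generated simple group that is recursively presentable (in particular a finitely presented one) has solvable word problem. Then, since $G$ is finitely generated, solvability of the word problem in $S$ restricts to solvability of the word problem in $G$ (decide $w =_G 1$ by deciding $w =_S 1$ using fixed expressions of the generators of $G$ in those of $S$). This half is entirely formal and would occupy only a short paragraph.

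For the hard direction, given a finitely generated $G$ with solvable word problem, I would aim to build a finitely presented simple group $S$ containing $G$ by combining a Higman-style embedding with a Thompson-like simple overgroup construction, channelled through an action on a locally finite tree. Step one: apply the Higman Embedding Theorem to sit $G$ inside a finitely presented group $H$; since $G$ has solvable word problem, I would try to arrange $H$ to also have solvable word problem (or at least to act faithfully and cocompactly on a nice locally finite tree $T$, perhaps via a Bass--Serre decomposition as a fundamental group of a finite graph of groups). Step two: realise $H$ as a subgroup of the homeomorphism group of the space of ends of $T$, and then enlarge $H$ inside a Nekrashevych--R\"over--Thompson style or Belk--Bleak--Matucci--Zaremsky twisted-Brin--Thompson style group $V(H,T)$ acting on the ends, which is the natural candidate for $S$. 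Step three: verify that this enlargement is (a) finitely presented, by using the acting-on-tree methods developed in the body of the paper, presumably via finiteness-properties arguments of Brown/Bestvina--Brady type; and (b) simple, by appealing to a commutator-generation or transitivity-on-configurations argument in the spirit of Higman's and Brin's simplicity proofs for Thompson-type groups.

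The principal obstacle, and the one I expect will absorb the bulk of the work, is step three: controlling finite presentability while forcing simplicity. Thompson-like overgroups are known to be simple after passing to a commutator subgroup, but that subgroup is typically only finitely generated under strong hypotheses on the acting group $H$ (solvable word problem alone is too weak in general), and finite presentability usually demands further finiteness properties (such as $\mathrm{FP}_2$) of the vertex and edge stabilisers in the graph of groups for $H$. The paper's announced method --- proving the Boone--Higman conjecture for groups acting on locally finite trees under controlled hypotheses --- is presumably designed precisely to navigate this tension, and I would therefore expect the general conjecture to remain open and my proposal to specialise, as the authors do, to classes where the tree action provides enough structural control, such as Baumslag--Solitar groups, free-by-cyclic groups, and Leary--Minasyan groups.
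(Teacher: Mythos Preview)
The statement you are attempting to prove is not a theorem in the paper but an open \emph{conjecture}. The paper does not prove the Boone--Higman Conjecture; it establishes special cases (Theorems~A, B, C) for groups acting on locally finite trees, and you correctly anticipate this specialisation in your final paragraph. So there is no ``paper's own proof'' to compare against for the full statement.

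Your treatment of the easy direction is correct and matches what the paper records: Kuznetsov's observation that finitely generated subgroups of finitely presented simple groups have solvable word problem. That direction is not in dispute.

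For the hard direction, your proposal is not a proof but a strategy sketch, and the gap you identify yourself is genuine and fatal for a general argument. Step one already fails as stated: the Higman Embedding Theorem does not in general produce an $H$ acting faithfully and cocompactly on a locally finite tree, nor one with solvable word problem, and there is no known mechanism to arrange this starting only from solvable word problem in $G$. Step three is where the real obstruction lives, as you note: simultaneously forcing simplicity and finite presentability of a Thompson-type overgroup requires structural hypotheses (finiteness properties of stabilisers, oligomorphic actions) that solvable word problem alone does not supply. The paper's actual contribution is precisely to isolate hypotheses under which this works --- faithful cocompact actions on locally finite trees with $F_n$ edge stabilisers --- and to build the machinery (rigid permutation groups, a Stein--Farley complex, Morse-theoretic connectivity arguments, then the Belk--Zaremsky twisted Brin--Thompson embedding) that verifies those hypotheses suffice. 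Your outline gestures at this machinery but does not engage with it, and cannot, because the full conjecture remains open.
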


It was first observed by Kuznetsov  that a finitely generated subgroup of a finitely presented simple group always has solvable word problem \cite{Kuznetsov58}. The other direction however is far from understood. What Boone and Higman were able to show is that a finitely generated group has solvable word problem if and only if it embeds into a simple subgroup of a finitely presented group \cite{BoHi74}. On the other hand, the conjecture has now been verified for many classes of groups, including virtually special groups, hyperbolic groups \cite{BBMZ-hyperbolic-23} and finitely generated subgroups of $GL_n(\BQ)$ \cite{Zar-24a},  see \cite[Section 5]{BBMZ-survey-23} for a summary of the current status of the conjecture. In this work we develop techniques to verify the Boone--Higman Conjecture for large classes of groups that act on locally finite trees. As a consequence we obtain:
\begin{athm}\label{mainthm:Boone-Higman}
    The Boone--Higman Conjecture holds for the following classes of groups:
    \begin{enumerate}
        \item Baumslag--Solitar groups;
        \item free(finite rank)-by-cyclic groups;
        \item Leary--Minasyan groups.
        \item Euclidean triangle Artin groups.
    \end{enumerate}
\end{athm}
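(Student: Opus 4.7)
The plan is to apply the general framework for groups acting on locally finite trees developed in the body of the paper: given a group $G$ acting suitably on a locally finite simplicial tree, the method reduces the Boone--Higman Conjecture for $G$ to verifiable conditions on the vertex and edge stabilizers together with the edge-to-vertex inclusion data. Each of the three classes in the theorem admits a natural HNN decomposition whose Bass--Serre tree is locally finite and whose vertex stabilizers are abelian or free of finite rank -- groups for which the conjecture is classically known -- so in each case the proof should proceed by producing the tree action and verifying the compatibility hypotheses of the framework.

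For $\mathrm{BS}(m,n) = \langle a,t \mid t a^m t^{-1} = a^n\rangle$, I take the standard Bass--Serre tree of the HNN extension: it is biregular of valences $|m|$ and $|n|$, the vertex stabilizers are conjugates of $\BZ$, and the two edge-to-vertex inclusions are multiplication by $m$ and by $n$. The Leary--Minasyan groups are built analogously as HNN extensions of $\BZ^n$ along an isomorphism between two finite-index subgroups, twisted by an element of $\mathrm{GL}_n(\BQ)$; they admit a Bass--Serre tree whose valence is determined by the two indices, and whose vertex stabilizers are $\BZ^n$. In both cases the vertex stabilizers are free abelian and the gluing data is linear, which should reduce the verification of the framework's hypotheses to essentially routine linear-algebraic checks. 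For the free-by-cyclic groups $F_r \rtimes_\varphi \BZ$ with $r$ finite, I take the Bass--Serre tree of the HNN decomposition with vertex group $F_r$: both edge-to-vertex inclusions are isomorphisms (one the identity, the other $\varphi$), so the tree is a line (hence locally finite) and the vertex stabilizer is free of finite rank.

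The main obstacle is anticipated to be the free-by-cyclic case: whereas the edge data in the (Leary--)Baumslag--Solitar situation is linear and explicit, the automorphism $\varphi \in \mathrm{Aut}(F_r)$ is arbitrary, and the delicate point is to exhibit an embedding of $F_r$ into a suitable finitely presented simple target that is compatible with $\varphi$ in the sense required by the self-similarity/compatibility conditions of the framework. Once this is in place, the general theorem should immediately deliver an embedding of $F_r \rtimes_\varphi \BZ$ into a finitely presented simple group, and the same mechanism, specialised to linear edge data, handles the Baumslag--Solitar and Leary--Minasyan cases.
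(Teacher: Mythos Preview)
Your plan has a genuine gap: the paper's main tree theorem (Theorem~\ref{thm:main-actions-on-trees}) requires the action on the tree to be \emph{faithful}, and for the natural Bass--Serre trees you propose this fails in general. Most blatantly, for a free-by-cyclic group $F_r\rtimes_\varphi\BZ$ the Bass--Serre tree is a line on which the normal subgroup $F_r$ acts trivially, so the action is never faithful and the framework does not apply. The same obstruction hits many Baumslag--Solitar groups (e.g.\ $BS(1,1)\cong\BZ^2$, and more generally whenever the intersection of all edge stabilisers is nontrivial), so the ``routine linear-algebraic checks'' you anticipate cannot succeed uniformly.

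The paper circumvents this not by working with the group's own tree but by passing to a larger graph of groups in the same commensurability class $\BSol_G$. Theorem~\ref{thm:main-BS} says: if \emph{some} group $H\in\BSol_G$ acts faithfully on its Bass--Serre tree, then \emph{every} $K\in\BSol_G$ embeds in a finitely presented simple group, because one can glue $K$ to $H$ along a new edge to obtain $L\in\BSol_G$ whose action is faithful (Lemma~\ref{lem:subgraphs-of-groups}), and then apply Theorem~\ref{thm:main-graphs-of-groups} to $L$. The actual work in each case is therefore to exhibit one faithful example: for $G=\BZ^k$ (covering Baumslag--Solitar and Leary--Minasyan groups) this comes from the strongly scale-invariant endomorphism $x\mapsto 2x$, whose associated ascending HNN extension has trivial action kernel; for $G=\mathbb{F}_k$ the faithful witness is a Burger--Mozes simple amalgam $\mathbb{F}_n\ast_H\mathbb{F}_n$. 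In particular the ``main obstacle'' you anticipate --- a $\varphi$-compatible embedding of $F_r$ into a simple target --- is a red herring: the argument is completely insensitive to the specific automorphism $\varphi$.
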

We emphasize that the problem of proving the Boone--Higman Conjecture for Baumslag--Solitar groups and free-by-cyclic groups was raised explicitly by Belk, Bleak, Matucci and Zaremsky \cite[Problem 5.3(4) \& (8)]{BBMZ-survey-23}.

More generally,  given a group acting on a tree, our techniques allow us to establish the following  theorem.

\begin{athm}\label{thm:main-actions-on-trees} [\ref{thm:main-graphs-of-groups}]
    Let  $G$ be a group acting faithfully and cocompactly on a locally finite tree. If all edge stabilizers for the action are finitely presented, then $G$ embeds in a finitely presented simple group.
\end{athm}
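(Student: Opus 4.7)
The plan is to exploit Bass--Serre theory combined with a Thompson-like overgroup construction in the spirit of Belk--Zaremsky's twisted Brin--Thompson groups.

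\textbf{Step 1: Graph-of-groups reduction.} Since $G$ acts faithfully and cocompactly on the locally finite tree $T$, Bass--Serre theory realises $G$ as the fundamental group of a finite graph of groups whose vertex and edge groups are the respective stabilisers. Local finiteness of $T$ combined with cocompactness forces every edge group to have finite index in its adjacent vertex groups: a vertex stabiliser $G_v$ acts with finitely many orbits on the finite star at $v$, so each edge stabiliser is the stabiliser of an element in a finite $G_v$-set. Hence the hypothesis that edge stabilisers are finitely presented lifts to all vertex stabilisers, and in particular to $G$ itself, reducing the problem to embedding the fundamental group of a finite graph of finitely presented groups with finite-index edge-to-vertex inclusions into a finitely presented simple group.

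\textbf{Step 2: Simple overgroup.} I would embed $G$ into a twisted Brin--Thompson-type group $SV_G$, using the natural action of $G$ on the Cantor space $\partial T$ of ends of $T$ (equipped with its basis of clopen cones indexed by finite subtrees). The locally finite branching of $T$ allows one to couple $G$ with a Thompson-style system of prefix substitutions on cones, compatible with the $G$-action. By the general results of Belk--Zaremsky, any such construction yields a simple group containing $G$.

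\textbf{Step 3: Finite presentability.} The technical heart is proving that $SV_G$ is finitely presented. I would split the relations of $SV_G$ into (a) finitely many Thompson-type tree relations controlled by finite subtrees of $T$, and (b) relations inherited from $G$, which in the Belk--Zaremsky framework reduce to relations in local stabilisers of the induced action on $\partial T$. These local stabilisers should be identifiable, via the Bass--Serre structure, with finite-index subgroups of edge stabilisers, which are finitely presented by hypothesis. Finite presentability of $SV_G$ should then follow by a standard presentation-combination argument.

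\textbf{Main obstacle.} Step 3 is the difficult part. The delicate issue is arranging the Thompson-like coupling so that the relations of $SV_G$ are \emph{genuinely} controlled by finitely presented edge stabilisers rather than by uncontrolled global features of the $G$-action on $\partial T$. This requires matching the cylinder-based local structure of $\partial T$ (where germ stabilisers are limits of sequences of edge stabilisers along rays) with an actual finite-index edge stabiliser at some bounded depth---which in turn is where the local finiteness of $T$ and the cocompactness of the action, combined with the graph-of-groups description from Step 1, must be used decisively to guarantee that only finitely many isomorphism types of edge stabiliser appear and that all relations needed are captured within them.
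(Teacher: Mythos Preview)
Your Step~1 is correct and matches the paper. The gap is in Steps~2--3: you propose to apply Belk--Zaremsky's machinery directly to $G$ acting on $\partial T$, but their finiteness criterion (their Theorem~D, quoted in the paper as Theorem~\ref{thm:Belk-Zaremsky}) requires the action to be \emph{oligomorphic}, and the action of $G$ on $\partial T$ (or on $V(T)$) almost never is. For instance, already for $BS(1,2)$ there are infinitely many $G$-orbits of pairs of ends. Your Step~3 tacitly acknowledges this by proposing to prove finite presentability of $SV_G$ by a direct presentation analysis rather than by citing Belk--Zaremsky, but the sketch there (``relations inherited from $G$ \ldots\ reduce to relations in local stabilisers'') is not a proof: without oligomorphicity there is no a priori bound on how many orbit-types of cones and cone-pairs occur, so there is no reason the Thompson-type relations should be finitely generated, let alone that the whole presentation closes up.

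The paper's resolution is to insert an intermediate group. One first passes from $G$ to the augmented graph of groups $\widehat{G}$ (vertex groups $G_v\times\mathbb{Z}/3\mathbb{Z}$), which makes the action on the new Bass--Serre tree $\widehat{\mathcal{T}}$ strongly faithful and gives enough branching for a ``viral expansion'' property. One then defines the \emph{rigid permutation group} $\mathrm{RP}_{\widehat G}(\widehat{\mathcal{T}})$, consisting of permutations of $V(\widehat{\mathcal{T}})$ that agree with elements of $\widehat G$ off a finite subtree. This group contains all finitely supported permutations, so its action on $V(\widehat{\mathcal{T}})$ is trivially oligomorphic, and now Belk--Zaremsky applies cleanly. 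The real work---your Step~3---is then relocated to proving that $\mathrm{RP}_{\widehat G}(\widehat{\mathcal{T}},\mathcal{B})$ is finitely presented for every finite $\mathcal{B}\subset V(\widehat{\mathcal{T}})$, and this is done not by combining presentations but geometrically: one builds a Stein--Farley cube complex on which these groups act, and runs Brown's criterion with Bestvina--Brady Morse theory. Cell stabilisers are virtually finite products of edge stabilisers (hence finitely presented), and the descending-link connectivity is established via a combinatorial argument on caret expansions. None of this structure is visible if you try to work with $G$ itself acting on $\partial T$.
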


Theorem~\ref{thm:main-actions-on-trees} is proved by first embedding $G$ into a bigger group which acts oligomorphically (see Definition \ref{defn:oligmc}) on the vertex set of the tree and then embedding this group further into the associated twisted Brin--Thompson group \cite{BelZar-20} which is always simple. In fact, given any group acting faithfully  on a locally finite tree $\mathcal{T}$, we define a new group $\mathrm{RP}_G(\mathcal{T})$, the rigid permutation group for the action of $G$ on $\mathcal{T}$, which is one of our key innovations and might be of independent interest. The definition is partially inspired by that of a topological full group \cite{GTS99,Mat12}. But instead of acting on the boundary of the tree $T$, our group is a subgroup of the permutation group of its vertex set. More precisely,  $\mathrm{RP}_G(\mathcal{T})$ is the subgroup of  permutations of the vertex set of $\mathcal{T}$ such that outside a finite subtree of $\mathcal{T}$, the action on the vertices of each component coincides with the action of some element of $G$. In particular, $\mathrm{RP}_G(\mathcal{T})$ contains both $G$ and the group of compactly supported permutations of vertices of $\mathcal{T}$. There is an induced action of  $\mathrm{RP}_G(\mathcal{T})$ on the boundary of $\mathcal{T}$ which coincides with the topological full group for the action of $G$ on the boundary. 

The hard part of our work lies in establishing the finiteness properties of $\mathrm{RP}_G(\mathcal{T})$ in nice situations. In fact, modulo some technical assumptions we show that $\mathrm{RP}_G(\mathcal{T})$  is of type $F_n$ if the edge stabilizers are of type $F_n$, see Proposition \ref{prop:main-strongly-faithful-actions-on-trees}. To prove this result, we build a Stein--Farley CAT(0) cube complex  equipped with a height function for $\mathrm{RP}_G(\mathcal{T})$ and show that the descending links of large height are highly connected. While this approach is used to prove finiteness properties for many Thompson-like groups, our proof is intricate and required several new ideas, for both, producing a suitable Stein--Farley complex and then analysing the connectivity properties of its descending links. For instance, the latter involved a surprising use of Dickson's Lemma \cite{FigFigSchSch-11} about unions of orthants in $\mathbb{N}_0^k$ (see the proof of Lemma~\ref{lem:maximum-in-alternative-histories}).

Note that the groups in Theorem~\ref{mainthm:Boone-Higman} all have a natural graph of groups structure, but they do not necessarily act faithfully on the corresponding Bass--Serre trees. Our idea is to enlarge the graphs of groups, so that the Bass-Serre trees remain locally finite but the actions become faithful. This idea leads us to define a very general class of graphs of groups for any given group $G$, which we call generalised Baumslag--Solitar groups  $\mathcal{BS}_G$ over $G$. By definition, $\BSol_G$ is the class of groups consisting of all finite graphs of groups with the property that all edge and vertex groups are abstractly commensurable with $G$ and all edge group inclusions in vertex groups have finite index. We have the following theorem for $\mathcal{BS}_G$.

    \begin{athm}\label{mthmC}[\ref{thm:main-BS}]
        Let $G$ be a finitely presented group. Assume that there is a non-trivial group $H\in \BSol_{G}$ such that $H$ acts faithfully on its Bass--Serre tree. Then every $K\in \BSol_{G}$ embeds in a finitely presented simple group.  
    \end{athm}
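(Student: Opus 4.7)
The plan is, given any $K \in \BSol_{G}$, to use the assumed faithfully acting $H$ to build a group $K' \in \BSol_{G}$ that contains $K$ and that acts faithfully on its own Bass--Serre tree; then Theorem~\ref{thm:main-actions-on-trees} applied to $K'$ delivers an embedding of $K'$ (and therefore of $K$) into a finitely presented simple group.

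For the construction, write $K = \pi_1(\Gamma_K)$ and $H = \pi_1(\Gamma_H)$ for finite graphs of groups $\Gamma_K,\Gamma_H$ realizing the $\BSol_{G}$-structure. Select any vertex of $\Gamma_K$ with vertex group $V$ and any vertex of $\Gamma_H$ with vertex group $W$. Both $V$ and $W$ are abstractly commensurable with $G$, so, since abstract commensurability is transitive, there exist finite index subgroups $V' \le V$ and $W' \le W$ together with an isomorphism $\varphi \colon V' \xrightarrow{\sim} W'$. Build a new finite graph of groups $\Gamma'$ by taking the disjoint union $\Gamma_K \sqcup \Gamma_H$ and adjoining a single new edge between the two chosen vertices, with edge group $V'$ and edge-to-vertex inclusions $V' \hookrightarrow V$ and $V' \xrightarrow{\varphi} W' \hookrightarrow W$. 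Put $K' := \pi_1(\Gamma')$. Every vertex group of $\Gamma'$ is commensurable with $G$ and every edge inclusion has finite index, so $K' \in \BSol_{G}$. The subgraph of groups inclusion $\Gamma_K \hookrightarrow \Gamma'$ induces an embedding $K \hookrightarrow K'$ by standard Bass--Serre theory, and the Bass--Serre tree $T_{K'}$ is locally finite because all edge inclusions are of finite index.

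To verify faithfulness, let $N$ be the kernel of the $K'$-action on $T_{K'}$. Every vertex stabilizer is a conjugate of some vertex group $G_u$ of $\Gamma'$, and normality of $N$ in $K'$ allows us to conjugate back: $N$ is contained in each $G_u$ itself, viewed as a subgroup of $K'$ via a fixed set of compatible Bass--Serre embeddings (from a spanning tree of $\Gamma'$ extending one of $\Gamma_H$). Choose a vertex $u$ lying in the subgraph $\Gamma_H \subseteq \Gamma'$; then $N \le G_u \le H$, so $N$ is contained in the subgroup $H$ of $K'$. Being normal in $K'$, $N$ is in particular normal in $H$, and it is still contained in every vertex group of $\Gamma_H$. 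Hence $N$ is a normal subgroup of $H$ sitting inside every vertex stabilizer for the $H$-action on its own Bass--Serre tree, so it lies in the kernel of that action, which is trivial by hypothesis. Therefore $N = \{1\}$ and $K'$ acts faithfully on $T_{K'}$.

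Since $\Gamma'$ is finite, the $K'$-action on $T_{K'}$ is cocompact, and every edge group of $\Gamma'$ is commensurable with the finitely presented group $G$ and hence itself finitely presented. All hypotheses of Theorem~\ref{thm:main-actions-on-trees} being met, we conclude that $K'$, and thus $K$, embeds in a finitely presented simple group. The delicate point is the faithfulness step: it rests on the observation that any normal subgroup of $K'$ lying in every vertex stabilizer of the Bass--Serre tree already lies in every vertex group of the defining graph of groups, which is exactly what permits us to localize $N$ inside the subgroup $H$ and transfer the faithfulness assumption of the $H$-action across.
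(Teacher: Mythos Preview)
Your proposal is correct and follows essentially the same strategy as the paper: form the amalgam $K' = K \ast_{V' \cong W'} H$ by connecting the two graphs of groups with a single edge, check that $K'$ still lies in $\BSol_G$ and acts faithfully on its Bass--Serre tree, and then invoke Theorem~\ref{thm:main-actions-on-trees}. The only visible difference is in how faithfulness is verified. The paper isolates this as Lemma~\ref{lem:subgraphs-of-groups}: it embeds the Bass--Serre tree $\mathcal{T}_H$ equivariantly into $\mathcal{T}_{K'}$ and observes that the kernel of the $K'$-action, being contained in every edge stabilizer along the image of $\mathcal{T}_H$, already lies in the kernel of the $H$-action (using that edge groups over $\Gamma_H$ are unchanged). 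You instead argue directly that the kernel $N$, being normal in $K'$ and contained in each vertex group $G_u$, sits inside $H$ and is normal there, hence lies in every $H$-conjugate of every $G_u$ with $u\in\Gamma_H$, i.e.\ in the kernel of the $H$-action on $\mathcal{T}_H$. Both arguments are short and equivalent in spirit; yours avoids setting up the tree embedding explicitly, while the paper's lemma is stated in slightly greater generality (allowing $H_v\lneq G_v$ as long as $H_e=G_e$).
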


Theorem~\ref{mthmC} can be applied to many classes of groups. For example, Theorem \ref{mainthm:Boone-Higman} is proved by taking $G$ to be a finitely generated abelian group or a finitely generated free group. We will discuss this in detail in Section \ref{sec:applications}. 
We also emphasize that the only obstruction for applying Theorem~\ref{mthmC} that we are currently aware of is that any $G$ satisfying its hypotheses must be residually finite. However, even if $G$ is residually finite, groups in $\BSol_G$ need not themselves be, as is shown by $BS(2,3)\in \BSol_{\mathbb{Z}}$ and the Burger--Mozes groups, which are in $\BSol_{\mathbb{F}_2}$. We thus expect that Theorem~\ref{mthmC} can be applied to prove the Boone--Higman Conjecture for further interesting classes of groups. A further interesting consequence of our approach is that the (finitely presented simple) Burger--Mozes groups also satisfy the permutational Boone--Higman Conjecture; see Remark \ref{rem:burger-mozes} for details.

\subsection*{Acknowledgements.}

KUB thanks the Deutsche      Forschungsgemeinschaft (DFG, German Research Foundation) for support     via the CRC TRR 358 – Project-ID 491392403. CLI thanks the Deutsche Forschungsgemeinschaft (DFG, German Research Foundation) for support via the RTG 2229 - Project-ID 281869850. XW is currently a member of LMNS and supported by NSFC No.12326601. We thank Bielefeld University and the Shanghai center for Mathematical Sciences, where part of the work was carried out, for the hospitality.

\section{Rigid permutation groups}

In this section we define the rigid permutation group $\mathrm{RP}_G(\mathcal{G})$ for a group $G$ acting on a locally finite graph $\mathcal{G}$. We start by introducing some graph theoretical notation that we will use throughout the paper. Given a graph $\mathcal{G}$ and a subgraph $\mathcal{H}\subseteq \mathcal{G}$, we will denote by $\mathcal{G}\setminus \mathcal{H}$ the subgraph of $\mathcal{G}$ spanned by all vertices in $V(\mathcal{G})\setminus V(\mathcal{H})$ and by $\overline{\mathcal{G}\setminus \mathcal{H}}$ the minimal subgraph of $\mathcal{G}$ containing all edges in $E(\mathcal{G})\setminus E(\mathcal{H})$. More generally, we will sometimes also denote by $\overline{M}$ the minimal subgraph of $\mathcal{G}$ containing all edges in a subset $M\subseteq E(\mathcal{G})$. We will further denote by ${\rm int}(\mathcal{G})$ the subgraph of $\mathcal{G}$ spanned by all interior vertices of $\mathcal{G}$, where an interior vertex is a vertex that is not a leaf. 

\begin{defn}
Let $\mathcal{G}$ be a locally finite infinite graph. We call a subgraph   a \emph{piece} of $\mathcal{G}$ if it is an infinite connected component of the complement of a finite subgraph of $\mathcal{G}$.
\end{defn}

\begin{defn}
 Let $G$ be a group and let $\mathcal{G}$ be a locally finite infinite graph which $G$ acts on. We call the action \emph{faithful} if no non-trivial element of $G$ acts as identity on $\mathcal{G}$. We call the action \emph{strongly faithful} if no non-trivial element of $G$ acts as identity on a piece of $\mathcal{G}$.
\end{defn}

\begin{defn}
    Let $G$ be a group and let $\mathcal{G}$ be a locally finite graph which $G$ acts on. We call a permutation $\phi$ of the vertices of $\mathcal{G}$ \emph{rigid} if  there is a finite subgraph $F$ of $\mathcal{G}$, such that for each piece $P$ of $\mathcal{G}\setminus F$, there exists an element $g\in G$ with $\phi\mid_P = g\mid_P$.  The \emph{rigid permutation group $\mathrm{RP}_G(\mathcal{G})$ (of the $G$-action on $\mathcal{G}$)} is the group of rigid permutations of $V(\mathcal{G})$. $F$ is called the \emph{(rigid) support} of $\phi$.

   Given a finite set $\mathcal{B}\subset V(\mathcal{G})$, we denote by $\mathrm{RP}_G(\mathcal{G}, \mathcal{B})$ the subgroup of $\mathrm{RP}_G(\mathcal{G})$ that fixes $\mathcal{B}$ pointwise.
\end{defn}

\begin{rem}
\begin{enumerate}
    \item When the graph $\mathcal{G}$ is finite, $\mathrm{RP}_G(\mathcal{G})$ coincides with the permutation group of vertices of $\mathcal{G}$.

    \item Any group element of $G$ acts on $\mathcal{G}$ as a rigid permutation. In particular, when the action is faithful, we have an embedding of $G$ into $\mathrm{RP}_G(\mathcal{G})$.

    \item 
    The action of $G$ on $\mathcal{G}$ induces an action of $G$ on $\partial \mathcal{G}$. One can then define the topological full group for the action of $G$ on $\partial \mathcal{G}$ once the induced action on $\partial \mathcal{G}$ is faithful. We expect that our proof for finiteness properties of $\mathrm{RP}_G(\mathcal{G})$ also applies to the corresponding topological full group.
    \item For a finite set $\mathcal{B}\subset V(\mathcal{G})$ one can view the group $\mathrm{RP}_G(\mathcal{G},\mathcal{B})$ as a rigid permutation group analogue of the pure mapping class group of a punctured surface. 
    \end{enumerate}
\end{rem}

\begin{ex}
    Let $G$ be a finitely generated infinite  group, and $C(G,S)$ be its Cayley graph with respect to some finite generating set $S$. Then $C(G,S)$ is a locally finite graph on which $G$ acts cocompactly and faithfully. It is not hard to see that the rigid permutation group $\mathrm{RP}_{G}(C(G,S))$ is again a finitely generated group. Indeed, denote the elements of $S$ by $s_1,\cdots s_n$, and denote by $\sigma_i$ the transposition of $1$ and $s_i$. Then $\mathrm{RP}_{G}(C(G,S))$ is generated by $s_1,\cdots, s_n$, $\sigma_1,\cdots, \sigma_n$. But $\mathrm{RP}_{G}(C(G,S))$ is in general not finitely presented. For example,  take $G = \BZ$ and $S=\{1\}$, then $\mathrm{RP}_{G}(C(G,S))$ is the second Houghton group which is not finitely presented \cite[Section 5]{Bro-87}. 
\end{ex}
  
\begin{rem}
  In the case of the Cayley graph $C(G,S)$, the rigid permutation group $\mathrm{RP}_{G}(C(G,S))$ can be also described as the group of quasi-automorphisms of the directed labeled Cayley graph. Quasi-automorphisms preserve colors and directions but are allowed to break finitely many edges. Quasi-automorphism groups and their finiteness properties have been studied by Lehnert in his PhD thesis~\cite{Lehnert2008}, see also \cite{AuAyFa18,NucStJG-18}.
\end{rem}

\section{A connectivity lemma}
\label{sec:connectivity-lemma}

We will later apply Bestvina--Brady Morse theory to derive that our groups have the asserted finiteness properties. For this we will need to check that the descending links in a cube complex that our groups act on have good connectivity properties. This argument will rely on a generalization of a result by Zaremsky \cite[Lemma 6.5]{Zar17} that we will now state and prove.

\begin{defn}
  Let $\Delta$ be a simplicial complex. A set $\sigma$ of $l+1$ vertices in $\Delta$ is called an \emph{$m$-pseudosimplex of dimension $l$} if any subset of up to $m+1$ vertices in $\sigma$ spans a simplex in $\Delta$. Note that any subset of an $m$-pseudosimplex is itself an $m$-pseudosimplex. We call the subsets of $\sigma$ its \emph{pseudofaces}.

  Two vertex sets $\sigma$ and $\tau$ in $\Delta$ are called \emph{$m$-joinable} if their union is an $m$-pseudosimplex, that is, if any subset of up to $m+1$ vertices of $\sigma\cup\tau$ spans a simplex in $\Delta$. Note that in this case, $\sigma$ and $\tau$ are both $m$-pseudosimplices.

  We say that $\Delta$ is \emph{$m$-flag with respect to an $m$-pseudosimplex $\sigma$} if any $m$-pseudosimplex $\rho$ of $\Delta$ and any pseudoface $\tau$ of $\sigma$ are $m$-joinable provided that they are vertex-wise joinable, that is, provided that any vertex of $\rho$ is joined to any vertex of $\tau$.
\end{defn}

\begin{lem}\label{lem:zaremsky-connectivity-lemma}
    Let $\Delta$ be a simplicial complex, and let $m, k\in \BN$. Suppose that there exists an $m$-pseudosimplex $\sigma$ of dimension $l$ such that $\Delta$ is $m$-flag with respect to $\sigma$, and for every vertex $v$ in $\Delta$, $v$ is $m$-joinable to some $(l-k)$-pseudoface of $\sigma$. Then $\Delta$ is $\min\{\lfloor \frac{l}{k}\rfloor-1,m-1\}$-connected.
\end{lem}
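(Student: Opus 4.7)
The plan is to show directly that every simplicial map $f\colon S^n\to\Delta$ extends to a simplicial map $D^{n+1}\to\Delta$ whenever $n\leq N:=\min\{\lfloor l/k\rfloor-1,\,m-1\}$, by coning through carefully chosen vertices of $\sigma$.

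First I would fix a simplicial triangulation of $S^n$ and use the hypothesis to assign each vertex $u$ an $(l-k)$-pseudoface $\tau(u)$ of $\sigma$ with $f(u)$ being $m$-joinable to $\tau(u)$. A pigeonhole count then yields, for every simplex $\rho=\{u_0,\ldots,u_p\}$ of $S^n$ (so $p\leq n$), that $\bigcap_{i=0}^{p}\tau(u_i)\neq\emptyset$: each $\tau(u_i)$ omits exactly $k$ vertices of $V(\sigma)$, so at most $(p+1)k\leq(n+1)k\leq\lfloor l/k\rfloor\cdot k\leq l<|V(\sigma)|$ vertices are omitted in total. Pick $w(\rho)\in\bigcap_i\tau(u_i)\subseteq V(\sigma)$ for each simplex $\rho$.

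The key combinatorial claim is the following: for every chain $\rho_0\subsetneq\rho_1\subsetneq\cdots\subsetneq\rho_t$ of simplices of $S^n$ and every subset $V'\subseteq V(\rho_0)$, the image set $\{f(u):u\in V'\}\cup\{w(\rho_0),\ldots,w(\rho_t)\}$ spans a simplex of $\Delta$. To prove it, add the vertices $w(\rho_i)$ one at a time to the initial simplex $\{f(u):u\in V'\}$: at each step the new vertex $w(\rho_i)\in V(\sigma)$ is joined to every vertex already present (to each $f(u)$ with $u\in V'\subseteq V(\rho_0)\subseteq V(\rho_i)$ because $w(\rho_i)\in\tau(u)$ and $m\geq 1$ forces an edge from any $m$-joinable pair, and to each earlier $w(\rho_j)$ because $\sigma$ itself is an $m$-pseudosimplex). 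The $m$-flag hypothesis applied to the singleton pseudoface $\{w(\rho_i)\}$ then enlarges the current $m$-pseudosimplex; the vertex-count bound $|V'|+j+1\leq |V(\rho_t)|+1\leq n+2\leq m+1$ at every step (using the strictness of the chain to bound $|V'|+t\leq|V(\rho_t)|$) upgrades each stage to a genuine simplex.

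Finally I would package these vertices into a simplicial extension $D^{n+1}\to\Delta$ via the derived subdivision of the abstract cone $v\ast S^n$: for each simplex $\rho$ of $S^n$, in order of increasing dimension, subdivide the cone simplex $v\ast\rho$ by inserting a new vertex $b_\rho$ and coning its (already subdivided) boundary to $b_\rho$. Extend $f$ by sending each $b_\rho$ to $w(\rho)$ and $v$ to an arbitrary vertex of $\sigma$. An induction on $\dim\rho$ shows that the simplices of the resulting subdivision fall into three classes: (i) simplices already in $S^n$, (ii) those of the form $\{b_{\rho_0},\ldots,b_{\rho_t}\}\cup V'$ for a chain $\rho_0\subsetneq\cdots\subsetneq\rho_t$ and $V'\subseteq V(\rho_0)$, and (iii) those of the form $\{v,b_{\rho_0},\ldots,b_{\rho_t}\}$ for such a chain. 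Class (i) is handled by $f$, class (ii) by the claim, and class (iii) because its image is a subset of $V(\sigma)$ of size at most $t+2\leq n+2\leq m+1$, hence a simplex of the $m$-pseudosimplex $\sigma$. The main obstacle is the combinatorial claim: the iterated $m$-flag step only produces an $m$-pseudosimplex, and its upgrade to a genuine simplex relies crucially on the vertex-count bound $n+2\leq m+1$, while the existence of the $w(\rho)$'s relies on the pigeonhole bound $n+1\leq\lfloor l/k\rfloor$ -- these two thresholds are precisely the two terms of the announced $\min\{\lfloor l/k\rfloor-1,\,m-1\}$.
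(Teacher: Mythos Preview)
Your argument is correct and follows a genuinely different route from the paper's proof. The paper argues by reducing to the case where $\Delta$ is finite, then inducts on the number of vertices of $\Delta$ lying outside $\sigma$: it removes one such vertex $v$, applies the induction hypothesis to the full subcomplex $\Delta'$ on the remaining vertices, and then shows that the relative link $L$ of $v$ in $\Delta'$ again satisfies the hypotheses of the lemma with the parameters $m,\,l$ each lowered by~$1$ and $k$ unchanged; another appeal to the induction hypothesis then gives the required connectivity of $L$, and the two pieces glue. By contrast, you fill an arbitrary simplicial sphere $S^n\to\Delta$ directly: your pigeonhole step pinpoints, for each simplex $\rho$ of $S^n$, a vertex $w(\rho)$ of $\sigma$, and your iterated stellar subdivision of the cone (together with the $m$-flag hypothesis and the vertex-count bound $n+2\le m+1$) produces an explicit simplicial extension to $D^{n+1}$.

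Your approach is more constructive and avoids both the passage to finite subcomplexes and the somewhat delicate verification that the hypotheses descend to relative links; it also makes transparent why the two thresholds $\lfloor l/k\rfloor-1$ and $m-1$ arise (the first from the pigeonhole choice of $w(\rho)$, the second from the size bound needed to upgrade $m$-pseudosimplices to genuine simplices). The paper's inductive scheme, on the other hand, is closer in spirit to Zaremsky's original argument that it generalises and would adapt more readily to variants of the hypotheses; it also never needs to build an explicit subdivision. One small simplification available in your proof: the iterative application of the $m$-flag condition (adding the $w(\rho_i)$ one at a time) can be replaced by a single application with $\rho=f(V')$ and $\tau=\{w(\rho_0),\dots,w(\rho_t)\}$, since each $f(u)$ is already joined to every $w(\rho_i)\in\tau(u)$.
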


\begin{proof}
  
  The result is trivially true if $\sigma$ is the empty simplex, so assume $\sigma\neq \emptyset$. Note that the hypotheses carry over to any full subcomplex of $\Delta$ that contains $\sigma$. Since the image of a sphere inside $\Delta$ under a continuous map lives inside some full finite subcomplex containing $\sigma$, we may thus restrict to the case when $\Delta$ is finite. 
  
  We now induct on the number $n$ of vertices in $\Delta$ outside $\sigma$. More precisely, our induction hypothesis is that for a given $n$ the statement is true for all possible values of $m$, $k$ and $l$.

  If $\Delta=\sigma$, the complex is $(m-1)$-connected since it is an $m$-pseudosimplex. This proves the case when $n=0$. Now assume that the induction hypothesis holds for $n-1$.

  Consider a vertex $v\in\Delta$ outside $\sigma$. Let $\Delta'$ be the full subcomplex of $\Delta$ spanned by all vertices but $v$. Then $\sigma$ is contained in $\Delta'$ and the hypotheses of the lemma are satisfied. By induction, $\Delta'$ is $\min\{\lfloor \frac{l}{k}\rfloor-1,m-1\}$-connected. 

  Let $L$ be the relative link of $v$ in $\Delta'$. It suffices to show that $L$ is
 $\min\{\lfloor \frac{l}{k}\rfloor-2,m-2\}$-connected. This is again done by reducing the statement to our induction hypothesis. Care is required as $L$ is not the full subcomplex spanned by its vertices, which means that we require an argument to establish that the hypotheses of the lemma are satisfied for $L$ and $\sigma':=\sigma\cap L$.
 
 First, note that $v$ is vertex-wise joinable to $\sigma'$ and therefore (by the $m$-flag condition) $m$-joinable to the full subcomplex spanned by the vertices of $\sigma'$  in $\Delta$.  It follows that $\sigma'$ is an $(m-1)$-pseudosimplex in $L$. Also note that the dimension $l'$ of $\sigma'$ is at least $l-k$.
 
 We further observe that $L$ is $(m-1)$-flag with respect to $\sigma'$. Indeed, consider an $(m-1)$-pseudosimplex $\rho$ in $L$ and a pseudoface $\tau'$ of $\sigma'$ so that any vertex of $\rho$ and any vertex of $\tau'$ span an edge in $L$. Consider a total of up to $m$ vertices $u_1,\ldots,u_p$ of $\rho$ and $w_1,\ldots,w_q$ of $\sigma'$. Then $\{v,u_1,\ldots,u_p\}$ is a simplex in $\Delta$ that is vertex-wise joinable to $\{w_1,\ldots,w_q\}$. Hence $\{v,u_1,\ldots,u_p,w_1,\ldots,w_q\}$ spans a simplex in $\Delta$ whence $\{u_1,\ldots,u_p,w_1,\ldots,w_q\}$ spans a simplex in $L$.

 It remains to show that any vertex $w$ in $L$ outside $\sigma'$ is $(m-1)$-joinable to an $(l'-k)$-pseudoface of $\sigma'$. Note that, $w$ is joinable to all but at most $k$ vertices of $\sigma$ in $\Delta$. Hence, the edge from $w$ to $v$ is vertex-wise joinable in $\Delta$ to all but at most $k$ vertices of $\sigma'$. By the $m$-flag condition for $\sigma$ inside $\Delta$, the edge is $m$-joinable in $\Delta$ to a face $\tau'$ of $\sigma'$ that misses at most $k$ vertices of $\sigma'$. It follows that $w$ is $(m-1)$-joinable to the $(l'-k)$-pseudo face $\tau'$ in $L$.

 Hence, by the induction hypothesis for $n-1$, the relative link $L$ of $v$ in $\Delta'$ is $\min\{ \lfloor \frac{l'-k}{k} \rfloor - 1, m-1 -1 \}$-connected. Since $l'\geq l-k$, this completes the proof.
\end{proof}

\section{A Stein--Farley complex for rigid permutation groups}
\label{sec:Stein-Farley-complex}

We will now restrict ourselves to the case of rigid permutation groups $\mathrm{RP}_G(\mathcal{T},\mathcal{B})$ where the graph is a locally finite tree $\mathcal{T}$ on which $G$ acts cocompactly and without inversions of edges. This is equivalent to $G$ being a graph of groups where edge groups have finite index in vertex groups, acting on its Bass--Serre tree. We will also assume that $\mathcal{T}$ is an infinite tree without leaves. 

On a first read it may be helpful for the reader to assume that the set $\mathcal{B}$ is empty, as this is the main motivating example and it might help in developing an intuition for the proof. The reason for which we also cover the case when $\mathcal{B}\subset V(\mathcal{T})$ is non-empty is that it will be required for satisfying the assumptions of \cite[Theorem D]{BelZar-20}, which will provide us with an embedding of $\mathrm{RP}_G(\mathcal{T})$ in a simple group with good finiteness properties.

Following G\'en\'evois, Lonjou and Urech \cite{GenLonUre-22} we will define a Stein--Farley complex for $\mathrm{RP}_G(\mathcal{T},\mathcal{B})$; our exposition follows the one given by  Aramayona, Bux, Flechsig, Petrosyan and Wu in \cite{AraBuxFlePetWu-21}.

We start by defining a suitable poset of equivalence classes $[T,f]$ of pairs of admissible finite subtrees $T\subset \mathcal{T}$ and elements $f\in \mathrm{RP}_G(\mathcal{T},\mathcal{B})$.

Let $\Gamma= G\backslash \mathcal{T}$ be the finite quotient graph. A \emph{system of gates} $\mathfrak{G}$ is a set of half edges of $\Gamma$ (we allow that both half-edges defining an edge are contained in $\mathfrak{G}$). The quotient map $\mathcal{T}\to \Gamma$ allows us to choose a $G$-invariant labelling of the (half-)edges of $\mathcal{T}$ by (half-)edges of $\Gamma$.  In particular, this induces a labelling of some of the half-edges of $\mathcal{T}$ by elements of $\mathfrak{G}$.

A leaf of a finite subtree $T\subset \mathcal{T}$ is called ($\mathfrak{G}$-)\emph{admissible} if the terminal half-edge defining it is labelled by an element $\nu\in \mathfrak{G}$; in this case we will also say that the leaf is of type $\nu$. We call a finite subtree $T\subset \mathcal{T}$  ($\mathfrak{G}$-)\emph{admissible}, if all leaves of $T$ are admissible, and all interior vertices of $T$ have maximal degree (their degree coincides with the degree of the corresponding vertex of $\mathcal{T}$). We call $T$ \emph{admissible with respect to $T_0\subset \mathcal{T}$}, if $T$ is admissible and $T_0\subset \mathcal{T}$ is an admissible tree that is contained in $T$. Usually we will work with admissible trees with respect to a fixed admissible base tree $T_0$ and wherever this is clear from context, we will omit mentioning it. We call $\mathfrak{G}$ \emph{admissible} if every finite subtree of $\mathcal{T}$ is contained in an admissible finite subtree of $\mathcal{T}$.

We note that a system of gates on the graph $\Gamma$ is admissible if and only if any infinite geodesic ray in $\Gamma$ is blocked by some gate. This follows from the local finiteness of the universal cover $T$ and König's Lemma~\cite{Koe-27}. As an immediate consequence of this criterion, we observe that there always exists an admissible gate system on the finite quotient $\Gamma$.  To see this, order the vertices of $\Gamma$ linearly (think of assigning a height to each vertex) and put a gate on each edge in the half-edge pointing to the higher endpoint. For loops, we put gates on both half-edges. For an example of a graph $\Gamma$ equipped with an admissible system of gates obtained this way, see Figure \ref{fig:system-of-gates-for-a-finite-graph}. As $\Gamma$ is finite, there are no infinite strictly descending rays. But as soon as a ray does not descend, it is blocked by a gate.

Figure \ref{fig:admissible-trees} shows examples of admissible and non-admissible trees in the universal cover $\widetilde{\Gamma}$ of the graph $\Gamma$ from Figure \ref{fig:system-of-gates-for-a-finite-graph} with respect to the chosen system of gates.

\begin{figure}
    \centering
    \includegraphics[width=0.5\linewidth]{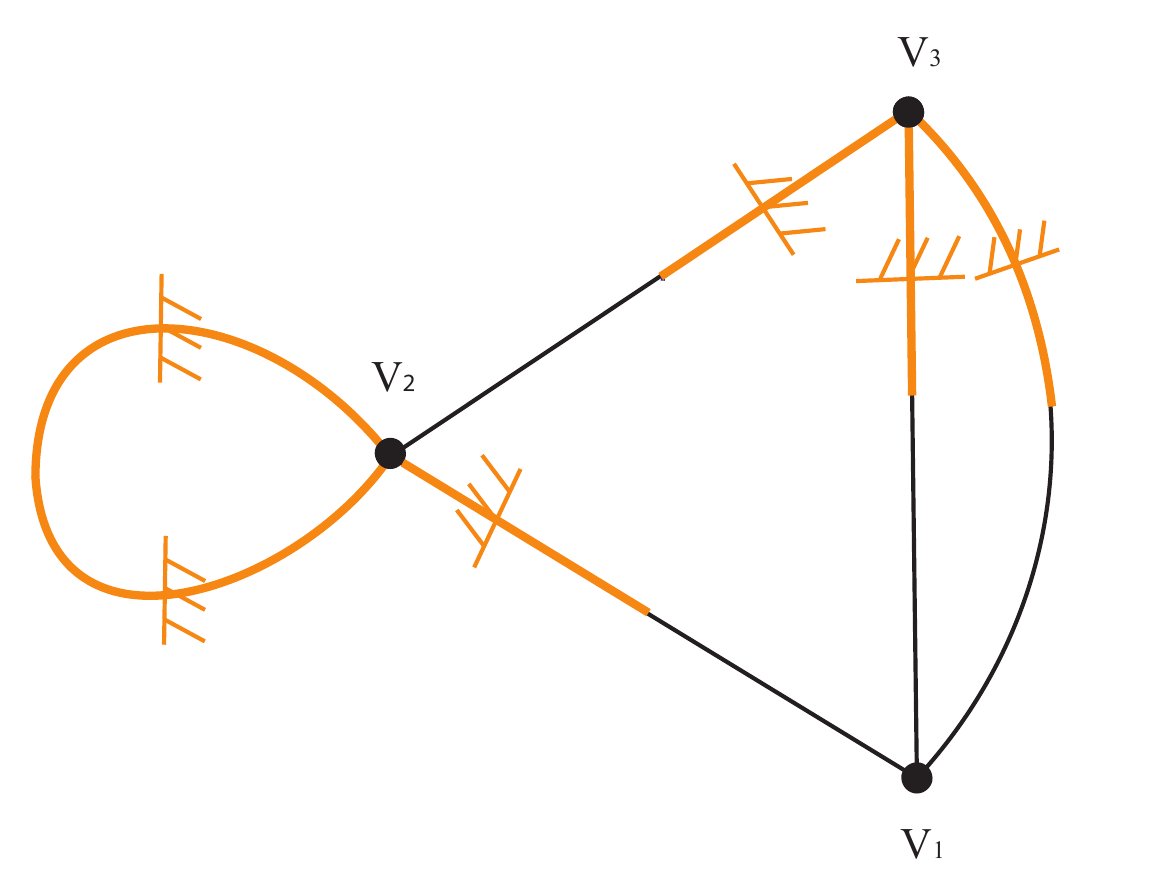}
    \caption{A finite graph $\Gamma$ equipped with an admissible system of gates indicated by the orange half-edges. The walls of the half-spaces illustrate the directions in which the gates are ``closed''.}
    \label{fig:system-of-gates-for-a-finite-graph}
\end{figure}

\begin{figure}
    \centering
    \includegraphics[width=\linewidth]{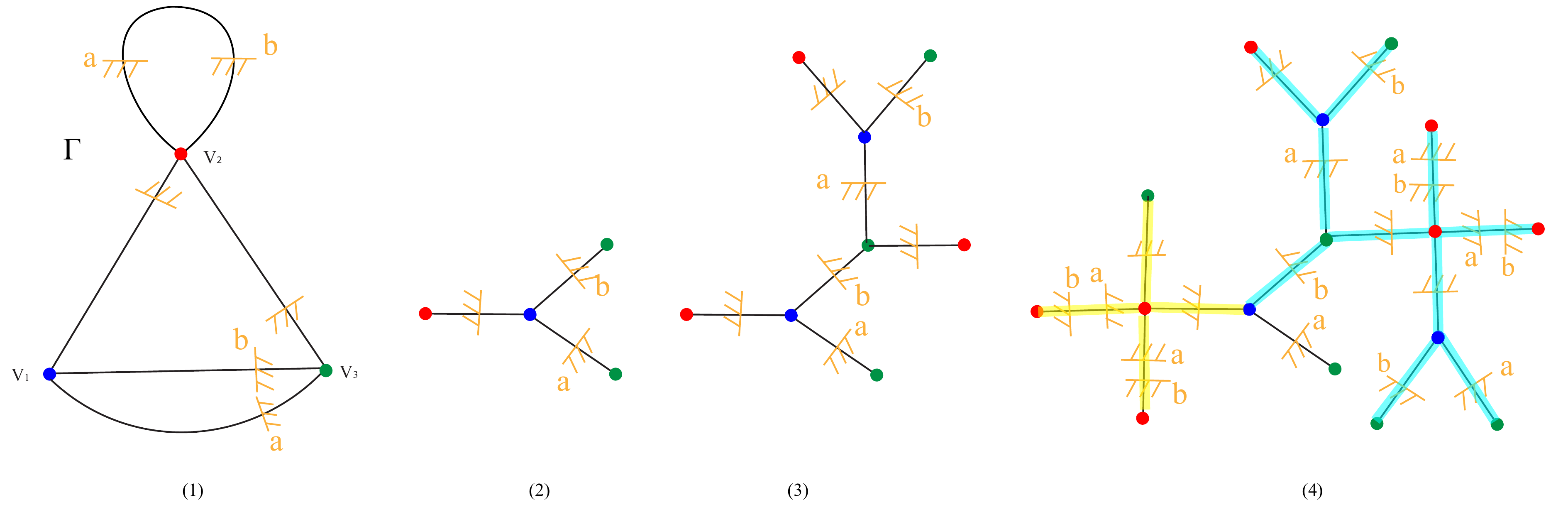}
    \caption{(1) The finite graph $\Gamma$ from Figure \ref{fig:system-of-gates-for-a-finite-graph} equipped with the admissible system of gates $\mathfrak{G}$ indicated in orange; (2) an admissible tree for the universal cover $\mathcal{T}:=\widetilde{\Gamma}$ of the finite graph $\Gamma$ that can serve as base tree $T_0$; (3) a finite subtree of $\mathcal{T}$ which contains $T_0$ and which is not admissible with respect to $\mathfrak{G}$; (4) an admissible tree obtained as an elementary expansion of $T_0$ in two leaves, with the carets we attached to $T_0$ highlighted in yellow and turquoise. Note that in the picture when there are more than one gates between a pair of adjacent vertices, we use letters in $\{a,b\}$ to distinguish them.}
    \label{fig:admissible-trees}
\end{figure}

We now assume that we have chosen an admissible system of gates $\mathfrak{G}$ and consider pairs $(T,f)$ of an admissible tree $T\subset \mathcal{T}$ and an element $f\in \mathrm{RP}_G(\mathcal{T},\mathcal{B})$. We will further assume from now that all our admissible trees are taken with respect to a fixed finite admissible base tree $T_0\subset \mathcal{T}$ which contains the finite set $\mathcal{B}$ in its interior, as we will later apply our results with this assumption. Most of our proofs in this section would work without this extra assumption. However, it is required in the proof of Lemma \ref{lem:intersections-and-unions-of-trees}.

We define an equivalence relation by $(T,f)\sim (S,g)$ if $g^{-1}\circ f\in \mathrm{RP}_G(\mathcal{T},\mathcal{B})$ restricts to a bijection $V(T)\to V(S)$ and has support $\mathrm{int}(T)$ (we only remove the interior of the finite trees $T$ and $S$ to guarantee that the leaves of $T$ are in correspondence with the pieces of $\mathcal{T}\setminus \mathrm{int}(T)$). We denote by $[T,f]$ the equivalence class of $(T,f)$ and by $\mathcal{P}$ the set of equivalence classes. Then $\mathrm{RP}_G(\mathcal{T},\mathcal{B})$ acts on $\mathcal{P}$ by $g\cdot [T,f]:= [T,g\circ f]$.

We introduce a relation on $\mathcal{P}$ by defining that $[T,f]\leq [S,g]$ if $(S,g)\sim (T',f)$ for a tree $T'$ with $T\subseteq T'$. We write $[T,f]< [S,g]$ if the inclusion $T\subset T'$ is strict.

The assumption on the support in the definition of $\sim$ guarantees that admissible trees for equivalent pairs have the same number of leaves. There is thus a well-defined height function $h: \mathcal{P} \to \mathbb{N}$ that maps $[T,f]$ to the number of leaves of $T$, see also Lemma \ref{lem:equivalence-of-pairs}. The $\mathrm{RP}_G(\mathcal{T},\mathcal{B})$-action on $\mathcal{P}$ is height preserving. 

To later construct a cube complex from $\mathcal{P}$, we introduce leaf expansions and elementary expansions. A \emph{leaf expansion} of an admissible tree $T\subset \mathcal{T}$ at a leaf $v$ is an admissible tree $T'\subset \mathcal{T}$ that is minimal such that $T\subsetneq T'$ and $v$ is an interior vertex of $T'$; if the half-edge of $T$ adjacent to $v$ is labelled by $\nu\in \mathfrak{G}$, we also say that $v$ is a \emph{leaf of type $\nu$} and \emph{$T'$ is a leaf expansion of $T$ of type $\nu$}. One can view a leaf expansion at $v$ as attaching a caret to $T$ at $v$; Definition \ref{defn:carets} will make this precise. An \emph{elementary expansion} of an admissible tree $T\subset \mathcal{T}$ is an admissible tree $T'\subset \mathcal{T}$ that is obtained from $T$ by a finite sequence of leaf expansions at leaves of $T$. We write $[T,f]\preccurlyeq [T',f]$, if $T'$ is an elementary expansion of $T$ and $[T,f]\prec [T',f]$, if $T\subsetneq T'$.

We show that the line of argument given in \cite[Section 5]{AraBuxFlePetWu-21} can be adapted to our situation to produce a Stein--Farley complex for $\mathrm{RP}_G(\mathcal{T},\mathcal{B})$ from its action on $\mathcal{P}$.

    \begin{lem}
        \label{lem:intersections-and-unions-of-trees}
        Let $T$ and $T'$ be admissible trees. Then $T\cap T'$ and $T\cup T'$ are admissible trees.
    \end{lem}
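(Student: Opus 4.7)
The plan is to exploit the standing assumption that both $T$ and $T'$ contain the fixed admissible base tree $T_0$, which forces the two trees to agree near their common leaves. First I observe that, since $T$ and $T'$ are subtrees of the tree $\mathcal{T}$ and both contain $T_0$, their union is connected and their intersection contains $T_0$, so both $T\cup T'$ and $T\cap T'$ are subtrees of $\mathcal{T}$. It then remains to verify the two admissibility conditions for each: maximal $\mathcal{T}$-degree at interior vertices, and the gate condition at leaves.

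The key structural fact behind both cases is that if $v\in T\cap T'$ is a leaf of both $T$ and $T'$, then the unique edges of $T$ and $T'$ incident to $v$ must coincide. Indeed, because $T_0$ lies in both trees and $\mathcal{T}$ is a tree, the unique $\mathcal{T}$-geodesic from $v$ to $T_0$ lies in each of $T$ and $T'$, and its last edge is the only edge at $v$ in each. Given this, admissibility of $T\cup T'$ follows: an interior vertex of $T\cup T'$ must be interior in $T$ or in $T'$ (otherwise the fact above would force $v$ to have degree $1$ in $T\cup T'$), so its maximal $\mathcal{T}$-degree in that tree is inherited by $T\cup T'$; and a leaf of $T\cup T'$ is a leaf in some admissible tree containing it, so its half-edge is admissible. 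For $T\cap T'$, an interior vertex $v$ has at least two neighbors in both $T$ and $T'$, so it is interior in both, and maximal degree in each puts all $\mathcal{T}$-edges at $v$ into $T\cap T'$. A leaf $v$ of $T\cap T'$ cannot be interior in both $T$ and $T'$---otherwise the previous argument would give $v$ its full $\mathcal{T}$-degree in $T\cap T'$, which is at least two since $\mathcal{T}$ has no leaves---so without loss of generality $v$ is a leaf of $T$, and its unique $T$-edge lies on the $\mathcal{T}$-geodesic from $T_0$ to $v$ and hence also in $T'$. Thus this edge belongs to $T\cap T'$ and is the unique edge of $T\cap T'$ at $v$, and it is admissible by admissibility of $T$.

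The only non-routine step above is the matching-of-leaf-edges claim, which rests crucially on both trees containing $T_0$ and on $\mathcal{T}$ being a tree. This is precisely where the base-tree assumption (highlighted in the paragraph preceding the lemma) enters: without it, $v$ could be a leaf of $T$ and of $T'$ along different $\mathcal{T}$-edges, so that $v$ would become an interior vertex of $T\cup T'$ of degree $2$, violating the maximal-degree condition whenever the $\mathcal{T}$-degree of $v$ is larger.
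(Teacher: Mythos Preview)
Your proof is correct and follows essentially the same approach as the paper: both argue that $T\cup T'$ and $T\cap T'$ are subtrees containing $T_0$, then verify the degree and gate conditions by relating interior vertices and leaves of the union/intersection to those of $T$ and $T'$. Your version is in fact more careful than the paper's on one point: the paper asserts that every leaf of $T\cup T'$ is a leaf of $T$ or $T'$ and that interior vertices inherit maximal degree, but it does not explicitly rule out the scenario where a common vertex $v$ is a leaf of both $T$ and $T'$ via \emph{different} edges (which would make $v$ an interior vertex of degree $2$ in the union). Your matching-of-leaf-edges observation, using the geodesic to $T_0$, is exactly what closes this small gap, and you rightly note that this is where the base-tree hypothesis is used. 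One tiny quibble: your geodesic argument as phrased assumes $v\notin T_0$; if $v\in T_0$ then $v$ must be a leaf of $T_0$ and its $T_0$-edge serves the same purpose.
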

    \begin{proof}
        Recall that by our assumptions there is an admissible base tree $T_0$ with $T_0\subset T,~ T'$. 
        
        We first prove that $T\cup T'$ is an admissible tree. Since $T_0\subset T,~ T'$ the union $T\cup T'$ is connected and thus a tree. It clearly contains $T_0$, every interior vertex of $T$ or $T'$ is an interior vertex of $T\cup T'$, and every leaf of $T\cup T'$ is a leaf of $T$ or $T'$, thus admissible. This implies that $T\cup T'$ is admissible.

        Similarly, $T\cap T'$ is a tree containing $T_0$. A vertex of $T\cap T'$ is an interior vertex, if and only if it is an interior vertex of $T$ and $T'$. Thus, all interior vertices of $T\cap T'$ have full degree and all leaves of $T\cap T'$ are admissible leaves of either $T$ or $T'$. This implies that $T\cap T'$ is admissible.
    \end{proof}

\begin{lem}\label{lem:unique-carets}
    For every $\nu\in \mathfrak{G}$ there is a non-trivial unique finite tree $S_{\nu}$  whose edges and vertices are labelled by elements of $\Gamma$ together with a fixed leaf $w_{\nu}$ whose interior half-edge is labelled by $\nu$ with the property that every leaf expansion of type $\nu$ of an admissible tree $T\subset \mathcal{T}$ is obtained by attaching $S_{\nu}$ to $T$ along $w_{\nu}$. Moreover, for every admissible tree $T$ and every choice of leaf $v$ of type $\nu$ there is a unique (non-trivial) leaf expansion $T_v$ of $T$ in $v$ and every admissible expansion of $T$ that has $v$ as an interior vertex contains $T_v$. 
\end{lem}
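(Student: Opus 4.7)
The plan is to build the leaf expansion $T_v$ of an admissible tree $T$ at a leaf $v$ of type $\nu$ by an explicit iterative procedure, then argue uniqueness, and finally show that the resulting ``caret'' depends only on $\nu$ by exploiting $G$-invariance of the labelling.

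First, I will construct $T_v$ by induction. Set $T^{(0)} := T$. Since the leaf expansion must turn $v$ into an interior vertex, and interior vertices of an admissible tree must have full degree, we are forced to add to $T^{(0)}$ all edges of $\mathcal{T}$ incident to $v$ that are not already in $T^{(0)}$; call the resulting tree $T^{(1)}$. At step $n\geq 1$, for every leaf $w$ of $T^{(n)}$ that is \emph{not} admissible (i.e.\ the terminal half-edge defining $w$ is not labelled by an element of $\mathfrak{G}$), we add to $T^{(n)}$ all edges of $\mathcal{T}$ incident to $w$ to form $T^{(n+1)}$; leaves of $T^{(n)}$ that are already admissible are left alone.

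Second, and this is the main technical step, I will show that this process terminates, i.e.\ $T^{(n+1)} = T^{(n)}$ for some $n$, in which case I set $T_v := T^{(n)}$. If the process did not terminate then, because $\mathcal{T}$ is locally finite and the growing trees $T^{(n)}$ form a nested increasing sequence of finite subtrees, König's Lemma would produce an infinite ray starting in $T^{(1)}$ all of whose terminal half-edges outside $T^{(1)}$ carry labels not in $\mathfrak{G}$. Projecting this ray to $\Gamma$ yields an infinite geodesic ray in $\Gamma$ none of whose half-edges (from some point on) lies in $\mathfrak{G}$, contradicting the admissibility of $\mathfrak{G}$ as characterized earlier via König's Lemma in the paper. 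By construction $T_v$ is admissible, strictly contains $T$, and has $v$ as an interior vertex.

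Third, I will prove minimality and uniqueness simultaneously. If $T'\subset\mathcal{T}$ is any admissible tree strictly containing $T$ with $v$ interior, I claim $T_v\subseteq T'$. Indeed, $v$ being interior in the admissible $T'$ forces all edges of $\mathcal{T}$ at $v$ to lie in $T'$, so $T^{(1)}\subseteq T'$. Inductively, any leaf of $T^{(n)}$ that is not admissible cannot be a leaf of the admissible tree $T'$, hence must be interior in $T'$, which forces all of its incident edges into $T'$, giving $T^{(n+1)}\subseteq T'$. So $T_v\subseteq T'$, which gives both uniqueness of the minimal expansion $T_v$ and the ``moreover'' statement about admissible expansions with $v$ interior.

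Finally, I will establish the $\nu$-dependence. Let $v_1,v_2$ be leaves of type $\nu$ in admissible trees $T_1,T_2$ respectively. Since the labelling is $G$-invariant and comes from the quotient $\mathcal{T}\to\Gamma$, two (half-)edges of $\mathcal{T}$ carry the same label precisely when they lie in a common $G$-orbit. In particular, there is $g\in G$ mapping the terminal half-edge defining $v_1$ onto the one defining $v_2$, so $g(v_1)=v_2$. Since $g$ is a label-preserving automorphism of $\mathcal{T}$, it commutes with the iterative caret construction: applied to the caret-construction starting at $v_1$, it produces the caret-construction starting at $v_2$. Thus the added tree $T_v\setminus T$ (together with the labels on its edges and vertices and the distinguished attaching leaf $w_\nu$) depends only on $\nu$; I set $S_\nu$ to be this labelled tree, and take $w_\nu$ to be the vertex glued to $v$ in the expansion. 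The main difficulty is the termination step, where one must carefully combine König's Lemma with the admissibility of the gate system $\mathfrak{G}$.
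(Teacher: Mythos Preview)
Your argument is correct and complete, but it follows a genuinely different route from the paper's proof. The paper argues non-constructively: it first uses admissibility of $\mathfrak{G}$ to produce \emph{some} admissible $T'\supsetneq T$ with $v$ interior, and then obtains $T_v$ as the intersection of all such $T'$, invoking Lemma~\ref{lem:intersections-and-unions-of-trees} to see that this intersection is again admissible. Your approach instead builds $T_v$ explicitly from below by the shell-by-shell growth $T^{(0)}\subset T^{(1)}\subset\cdots$, proves termination directly via K\"onig's Lemma and the gate criterion, and then shows minimality by the clean induction $T^{(n)}\subseteq T'$. The trade-off is that the paper's version is shorter once Lemma~\ref{lem:intersections-and-unions-of-trees} is in hand, while yours is self-contained (it never uses that lemma) and makes the caret $S_\nu$ completely explicit; this also makes your final $G$-equivariance step particularly transparent, since the iterative construction visibly depends only on the outward branch at $v$ and is carried by any $g\in G$ matching the two $\nu$-labelled half-edges. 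One small point worth tightening in your write-up: when you project the infinite ray to $\Gamma$, the image is an infinite \emph{non-backtracking} edge path rather than a geodesic ray; this is exactly the object to which the admissibility criterion applies, so the contradiction goes through, but the word ``geodesic'' is slightly misleading since $\Gamma$ is finite.
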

\begin{proof}
        Let $T$ be an admissible tree and let $v$ be a terminal vertex of a leaf of type $\nu$. Denote by $e$ the edge corresponding to this leaf. Since $\mathcal{T}$ has no leaves, there is at least one further edge adjacent to $v$. This implies that there is a finite subtree $T'$ of $\mathcal{T}$ that contains this further edge and has $T$ as a proper subtree. By our assumptions we may assume that $T'$ is admissible by making it larger if needed. In particular, any such $T'$ will have $v$ as an interior vertex of maximal degree. Thus, the same is true for the intersection of all such $T'$. Since by Lemma \ref{lem:intersections-and-unions-of-trees} this intersection is admissible, there is a unique, non-trivial leaf expansion of $T$ in $v$. We claim that $S_{\nu}:=(T'\setminus T)\cup \overline{e}$ has the desired property, where we label the edge $e$ by $w_{\nu}$.  By construction $S_{\nu}$ is unique and we only need to prove that it does not depend on the choice of $T$ and of one of its leaves labelled by $\nu$. This is an immediate consequence of the fact that $G$ acts transitively on half-edges labelled by $\nu$. Indeed, if $\widehat{T}$ is another admissible tree with a leaf $\widehat{v}$ labelled by $\nu$ and $\widehat{S}_{\nu}$ is the corresponding finite tree with labelled leaf $\widehat{w}_{\nu}$, then there is an element $g\in G$ that maps $w_{\nu}$ to $\widehat{w}_{\nu}$ and such that $\widehat{T}\cup g\cdot S_{\nu}$ is a leaf expansion of $\widehat{T}$ in $\widehat{v}$. By uniqueness $g\cdot S_{\nu}=\widehat{S}_{\nu}$. The moreover part is an immediate consequence of our proof.
    \end{proof}

    \begin{defn}\label{defn:carets}
        For $\nu \in \mathfrak{G}$ we call the tree $S_{\nu}$ from Lemma \ref{lem:unique-carets} \emph{caret (of type $\nu$)} and the leaves of $S_{\nu}$ different from $w_{\nu}$ the \emph{terminal leaves} of the caret $S_{\nu}$.
    \end{defn}
    \begin{rem}
        Lemma \ref{lem:unique-carets} shows that given an admissible tree $T$ and a leaf $v$ of type $\nu$, there is a unique leaf expansion in $v$, which is given by attaching a caret of type $\nu$ to $T$ along the edge $w_{\nu}$ adjacent to $v$.     
    \end{rem}

    \begin{cor}\label{cor:admissible-trees-from-leaf-expansions}
        Let $T\subset \mathcal{T}$ be an admissible tree. Then $T$ is obtained from $T_0$ by a finite sequence $(v_1, \cdots, v_n)$ of leaf expansions in leaves of $\mathcal{T}$. This sequence is unique up to reordering. 
    \end{cor}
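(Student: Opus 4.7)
The plan is to establish existence by iterated leaf expansion and uniqueness by characterising the underlying set of expansion vertices intrinsically from $T$.

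For existence, I would iterate the following procedure. Starting with $T^{(0)} := T_0$, suppose I have constructed an admissible chain $T_0 \subseteq T^{(1)} \subseteq \cdots \subseteq T^{(i)} \subseteq T$ together with leaves $v_1, \ldots, v_i$ realising the successive expansions. If $T^{(i)} = T$, stop. Otherwise, since $T^{(i)} \subsetneq T$ are both connected admissible subtrees of $\mathcal{T}$, there must exist a leaf $v_{i+1}$ of $T^{(i)}$ that is an interior vertex of $T$. By Lemma \ref{lem:unique-carets}, there is a unique non-trivial leaf expansion $T^{(i+1)}$ of $T^{(i)}$ at $v_{i+1}$, and the ``moreover'' part of that lemma guarantees $T^{(i+1)} \subseteq T$ because $T$ itself is an admissible expansion of $T^{(i)}$ having $v_{i+1}$ as an interior vertex. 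Since $|V(T)| - |V(T^{(i)})|$ strictly decreases with each step (carets are non-trivial), the process terminates with some $T^{(n)} = T$, producing the required sequence.

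For uniqueness, I would first argue that the underlying set $\{v_1, \ldots, v_n\} \subset V(\mathcal{T})$ is determined by $T$ alone. Each $v_i$ is a leaf of $T^{(i-1)}$ and becomes an interior vertex of $T^{(i)}$; since leaf expansions neither delete vertices nor convert interior vertices back to leaves, each $v_i$ stays interior in $T$, and the $v_i$ are pairwise distinct. Conversely, any vertex that is interior in $T$ but not interior in $T_0$ must have been promoted from a leaf at some step, hence occurs among the $v_i$. Therefore $\{v_1, \ldots, v_n\} = \mathrm{int}(V(T)) \setminus \mathrm{int}(V(T_0))$, independently of the chosen sequence. For the order, I would observe that by Lemma \ref{lem:unique-carets} the tree $T$ decomposes canonically as $T_0$ with a finite collection of carets attached, one rooted at each $v_i$; these carets form a forest under the relation ``$c$ is attached at a terminal leaf of $c'$''. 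A sequence $(v_1, \ldots, v_n)$ is a valid expansion sequence iff it linearly extends this partial order on roots, and any two linear extensions of the same partial order differ by transpositions of incomparable elements, i.e.\ by reordering.

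The step I expect to require most care is verifying the containment $T^{(i+1)} \subseteq T$ in the existence argument: it rests essentially on the uniqueness statement of Lemma \ref{lem:unique-carets}, namely that every admissible tree having $v_{i+1}$ as an interior vertex must contain the unique caret attached at $v_{i+1}$. Once this is in place, both halves of the corollary follow in a routine fashion.
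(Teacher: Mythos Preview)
Your existence argument is correct and essentially identical to the paper's.

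The uniqueness argument, however, contains a genuine gap. Your claimed characterisation $\{v_1,\ldots,v_n\}=\mathrm{int}(V(T))\setminus\mathrm{int}(V(T_0))$ is false in general, because a caret $S_\nu$ need not be a single star: it is the \emph{minimal} admissible tree making the root interior, and if some of the newly added neighbours do not carry a $\mathfrak{G}$-labelled half-edge they cannot serve as admissible leaves and must themselves become interior, forcing further growth. Such non-root interior vertices of a caret land in $\mathrm{int}(V(T))\setminus\mathrm{int}(V(T_0))$ but were never leaves of any intermediate $T^{(i)}$; they pass directly from ``not in the tree'' to ``interior''. Hence your sentence ``any vertex that is interior in $T$ but not interior in $T_0$ must have been promoted from a leaf at some step'' is exactly where the argument breaks. (For a concrete instance, take $\Gamma$ to be a single edge with a gate on only one of its two half-edges; then admissible leaves all have one colour, but every leaf expansion produces new interior vertices of the other colour as well. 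This is also why the paper tracks the numbers $I_j$ of interior vertices of each caret in Section~\ref{sec:expansions-of-trees}.)

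The paper repairs this by restricting to those interior vertices of $T$ outside $\mathrm{int}(T_0)$ that are adjacent to a $\mathfrak{G}$-labelled half-edge pointing away from $T_0$ --- precisely the condition singling out vertices that were at some stage admissible leaves. Your caret-decomposition remark at the end could also be turned into a correct intrinsic description (the $v_i$ are the attaching points of the carets), but you would need to use it \emph{in place of}, not on top of, the flawed set identity. Your ordering discussion via linear extensions of the caret forest is correct, though more than the statement actually requires.
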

    \begin{proof}
        We inductively construct a sequence of trees $T_0, T_1, \cdots, T_n=T$ such that $T_i$ is obtained from $T_{i-1}$ by a leaf expansion as follows. If $T_0\subsetneq T$, then there is a leaf $v_1$ of $T_0$, which is an interior vertex of $T$. Lemma \ref{lem:unique-carets} implies that the leaf expansion $T_1$ of $T_0$ in $v_1$ is contained in $T$. Since $T$ is finite, repeating this argument finitely many times yields the existence of such a sequence. For uniqueness we observe that the $v_i$ correspond precisely to the interior vertices of $T$, which are not interior vertices of $T_0$ and which are adjacent to a half-edge labelled by an element of $\mathfrak{G}$ that points away from $T_0$.
    \end{proof}

\begin{lem}\label{lem:basic-properties-of-expansions}
    Suppose $(T_1,f_1)\sim (T_2,f_2)$. Then:
    \begin{enumerate}
        \item If $T_1'$ is an admissible tree containing $T_1$, then there is an admissible tree $T_2'$ containing $T_2$ such that $[T_1',f_1]=[T_2',f_2]$.
        \item If $[T_1',f_1]=[T_2',f_2]$ and $T_1'$ is obtained from $T_1$ by an elementary expansion (of $\ell$ leaves), then $T_2'$ is obtained from $T_2$ by an elementary expansion (of $\ell$ leaves).
        \item If $f_1=f_2$, then $T_1=T_2$.
    \end{enumerate}
\end{lem}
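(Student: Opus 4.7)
I set $h:=f_2^{-1}\circ f_1\in\mathrm{RP}_G(\mathcal{T},\mathcal{B})$. By definition of $\sim$, $h$ restricts to a bijection $V(T_1)\to V(T_2)$ and, for each leaf $v$ of $T_1$, there is $g_v\in G$ with $h|_{P_v}=g_v|_{P_v}$ on the piece $P_v$ of $\mathcal{T}\setminus\mathrm{int}(T_1)$ at $v$. I will use, as the key geometric input, that $v':=g_v(v)$ is a leaf of $T_2$ and that $g_v(P_v)$ is the piece of $\mathcal{T}\setminus\mathrm{int}(T_2)$ at $v'$; this follows from reading the equivalence symmetrically in $(T_1,T_2)$ together with $G$-invariance of the gate labelling on $\mathcal{T}$.

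For (3), the hypothesis $f_1=f_2$ gives $h=\mathrm{id}$, so the bijection $V(T_1)\to V(T_2)$ forces $V(T_1)=V(T_2)$, whence $T_1=T_2$ since a subtree of $\mathcal{T}$ is determined by its vertex set.

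For (1), I decompose $T_1'=T_1\cup\bigcup_v Q_v$, where $Q_v:=T_1'\cap P_v$ is the (possibly trivial) subtree that $T_1'$ adds at the leaf $v$ of $T_1$, and I define
\[
T_2':=T_2\cup\bigcup_v g_v(Q_v),
\]
glued so that $g_v(Q_v)$ is attached to $T_2$ at $v'$. Because each $g_v$ is a graph automorphism of $\mathcal{T}$ preserving the gate labelling, admissibility of $T_1'$ (maximal-degree interior vertices and gate-labelled leaves) transfers to $T_2'$. To verify $[T_1',f_1]=[T_2',f_2]$, I piece together the given bijection on $V(T_1)$ with the bijections $V(Q_v)\setminus\{v\}\to V(g_v(Q_v))\setminus\{v'\}$ coming from $g_v$ (which agrees with $h$ on $P_v$) to get the required bijection $V(T_1')\to V(T_2')$, and I note that every piece of $\mathcal{T}\setminus\mathrm{int}(T_1')$ is contained in some $P_v$, on which $h=g_v$, so $\mathrm{int}(T_1')$ is a rigid support for $h$.

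For (2), (1) first produces $\widetilde T_2'\supseteq T_2$ with $[T_1',f_1]=[\widetilde T_2',f_2]$; applying (3) to $(\widetilde T_2',f_2)\sim(T_2',f_2)$ gives $\widetilde T_2'=T_2'$, so the explicit description $T_2'=T_2\cup\bigcup_v g_v(Q_v)$ from (1) is available. If $T_1'$ is an elementary expansion of $T_1$ at $\ell$ leaves, Lemma~\ref{lem:unique-carets} tells me that exactly $\ell$ of the $Q_v$ are non-trivial, each being the caret $S_{\nu_v}$ of type $\nu_v$ at $v$; $G$-equivariance of the labelling then makes $g_v(Q_v)$ the caret of type $\nu_v$ attached at the leaf $v'$ of $T_2$, and a second appeal to Lemma~\ref{lem:unique-carets} identifies $T_2'$ as the elementary expansion of $T_2$ at exactly those $\ell$ leaves $v'$. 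The step I expect to require the most care is the leaf-type correspondence $v\mapsto v'$, ensuring that $g_v$ sends the unique $T_1$-half-edge at $v$ onto the unique $T_2$-half-edge at $v'$; without it the gluing in (1) would not produce a subtree extending $T_2$ and the caret-preservation in (2) would fail. I plan to derive this from the symmetric reading of the rigid-support condition and the $G$-invariance of the gate labelling.
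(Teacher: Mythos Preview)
Your proposal is correct and follows essentially the same approach as the paper. Both arguments set $h=f_2^{-1}\circ f_1$, use that $h$ acts on each piece $P_v$ of $\mathcal{T}\setminus\mathrm{int}(T_1)$ as a graph automorphism $g_v\in G$, and take $T_2':=h(T_1')$; your decomposition $T_2'=T_2\cup\bigcup_v g_v(Q_v)$ just makes this explicit, and your use of (3) inside the proof of (2) to pin down $T_2'$ uniquely is a clean way to say what the paper leaves implicit. The leaf-type correspondence you flag as delicate is exactly what the paper summarises in the single phrase ``maps the connected components of $\mathcal{T}\setminus\mathrm{int}(T_1)$ rigidly to the connected components of $\mathcal{T}\setminus\mathrm{int}(T_2)$'', and your justification via the symmetric reading of $\sim$ and $G$-invariance of the gate labelling is the right one.
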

\begin{proof}
    By definition, $f_2^{-1}\circ f_1|_{\mathcal{T}\setminus {\rm int}(T_1)}$ maps the connected components of $\mathcal{T}\setminus {\rm int}(T_1)$ rigidly to the connected components of $\mathcal{T}\setminus {\rm int}(T_2)$. In particular, it preserves leaf types and maps leaf expansions to leaf expansions of the same type. Thus, setting $T_2':= (f_2^{-1}\circ f_1)(T_1')$ shows (1) and (2).

    For (3) observe that by definition of the equivalence relation the identitiy map $id_{\mathcal{T}}=f_1^{-1}\circ f_2$ maps $T_1$ to $T_2$ bijectively. Thus, $T_1=T_2$.
\end{proof}

\begin{lem}\label{lem:uniqueness-for-equal-maps}
    Assume that $[T,f]\leq v\leq w\leq [T',f]$. Then there are unique admissible trees $T_v$ and $T_w$ with $T\subset T_v\subset T_w \subset T'$, $v=[T_v,f]$ and $w=[T_w,f]$.
\end{lem}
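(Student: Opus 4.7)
The plan is to exploit Lemma \ref{lem:basic-properties-of-expansions}, in particular parts (1) and (3), to successively move to representatives with the map $f$ along the chain $[T,f]\leq v\leq w\leq [T',f]$, with the trees growing accordingly. Part (1) supplies the existence at each step, and part (3) pins down the representative uniquely whenever the map is fixed.

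First I would unfold the definition of $\leq$ at each link of the chain. Since $[T,f]\leq v$, there exists by definition a representative $(T_v,f)$ of $v$ with $T\subseteq T_v$, and by Lemma \ref{lem:basic-properties-of-expansions}(3) this $T_v$ is uniquely determined by $v$ and $f$. Next, since $v\leq w$, the definition of $\leq$ provides a representative $(S,g)$ of $v$ and a tree $W\supseteq S$ with $w=[W,g]$. Applying Lemma \ref{lem:basic-properties-of-expansions}(1) to the equivalence $(T_v,f)\sim(S,g)$ and the admissible tree $W\supseteq S$, we obtain an admissible tree $T_w\supseteq T_v$ with $w=[T_w,f]$; again part (3) makes this $T_w$ unique given $w$ and $f$.

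For the top of the chain, $w\leq[T',f]$ gives, after translating the representative to $f$ by the same use of Lemma \ref{lem:basic-properties-of-expansions}(1), a tree $T''\supseteq T_w$ with $[T'',f]=[T',f]$. Part (3) of the same lemma forces $T''=T'$, so $T_w\subseteq T'$. Combining the three inclusions yields $T\subseteq T_v\subseteq T_w\subseteq T'$ with $v=[T_v,f]$ and $w=[T_w,f]$. Uniqueness of $T_v$ and $T_w$ with these properties is then an immediate consequence of Lemma \ref{lem:basic-properties-of-expansions}(3) applied to any competing choice.

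I expect no serious obstacle; the argument is essentially bookkeeping. The only point that needs a moment of care is the step that translates the arbitrary representative $(S,g)$ of $v$ back to the fixed representative $(T_v,f)$ before enlarging the tree, since $\leq$ is defined through an existential quantifier over representatives. This is exactly what Lemma \ref{lem:basic-properties-of-expansions}(1) is designed to do, and the same device takes care of the top link. After that, uniqueness is just the observation that the map $f$ already determines the tree within an equivalence class.
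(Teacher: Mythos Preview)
Your proposal is correct and takes essentially the same approach as the paper: both proofs reduce existence to repeated applications of Lemma~\ref{lem:basic-properties-of-expansions}(1) to transport enlarged trees between representatives, and obtain uniqueness from Lemma~\ref{lem:basic-properties-of-expansions}(3). The only cosmetic difference is that the paper first sandwiches $T_v$ between $T$ and $T'$ (producing $T_v'\subset T'$ and then invoking part~(3) to get $T_v'=T_v$) and then repeats the argument for $w$, whereas you chain forward $T\subseteq T_v\subseteq T_w\subseteq T'$ and close the top inclusion at the end; neither ordering buys anything over the other.
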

\begin{proof}
    The uniqueness is an immediate consequence of Lemma \ref{lem:basic-properties-of-expansions}(3). We now prove the existence. By definition there are representatives $[T,f]=[S,g]$, $v=[S_v,g]$ such that $S\subset S_v$. Lemma \ref{lem:basic-properties-of-expansions}(1) implies that there is an admissible tree $T_v$ with $T\subset T_v$ such that $v=[T_v,f]$. Again using the definition of equivalence, there are representatives $[R_v,h]=[T_v,f]=v$ and $[R',h]=[T',f]$ such that $R_v\subset R'$. Thus, by Lemma \ref{lem:basic-properties-of-expansions}(1), there is a tree $T_v'\subset T'$ with $[T_v',f]=v$. Lemma \ref{lem:basic-properties-of-expansions} implies that $T_v'=T_v$. This proves that there is a unique tree $T_v$ such that $v=[T_v,f]$ and $T\subset T_v\subset T'$. Applying the same line of argument to $v=[T_v,f]\leq w \leq [T',f]$ completes the proof.
\end{proof}

We denote by $|\mathcal{P}|$ the geometric realisation of the poset $\mathcal{P}$. It is the simplicial complex, whose $n$-simplices are the subsets $\left\{v_0,\cdots, v_n\right\}\subset \mathcal{P}$ with $v_0<v_1<\cdots <v_n$. 

\begin{lem}\label{lem:directed-poset}
    $(\mathcal{P},\leq)$ is a directed poset. In particular, its geometric realisation $|\mathcal{P}|$ is contractible.
\end{lem}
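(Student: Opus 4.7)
The plan is to first establish directedness of $(\mathcal{P},\leq)$; contractibility of $|\mathcal{P}|$ will then follow from the standard fact that the nerve of a directed poset is contractible.

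Given $v_1=[T_1,f_1]$ and $v_2=[T_2,f_2]$, I would construct a common upper bound $w$ as follows. First, by Lemma~\ref{lem:intersections-and-unions-of-trees}, the union $T_1\cup T_2$ is admissible, and iterating Lemma~\ref{lem:unique-carets} I can enlarge it through elementary expansions to an admissible tree $T$ whose interior contains $T_1\cup T_2$ together with the rigid supports of $f_1$, $f_2$ and of $\phi:=f_1^{-1}f_2$. In particular, $\phi$ then acts as an element of $G$ on every piece of $\mathcal{T}\setminus\mathrm{int}(T)$, hence as a graph automorphism there.

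The technical heart of the argument is to further enlarge $T$ to an admissible tree $T^*$ for which $\phi(V(T^*))$ is itself the vertex set of an admissible subtree $S$ with $V(T^*)\subseteq V(S)$. Since $\phi$ acts piecewise by graph automorphisms from $G$, each piece-portion of $V(T^*)$ already maps to the vertex set of a subtree; the only obstruction comes from the finitely many interior vertices, where $\phi$ is an arbitrary bijection of finite sets that may fail to attach consistently or may send vertices of $V(T^*)$ outside $V(T^*)$. I would handle this by an iterative closure procedure: whenever some vertex of $V(T^*)$ has its $\phi$- or $\phi^{-1}$-image missing, or whenever the running image set fails to span a connected subgraph, I would absorb the missing vertices into $V(T^*)$ and re-admissibilise via Lemma~\ref{lem:unique-carets}. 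Because the rigid support of $\phi$ is finite and $G$-translates preserve admissibility of subtrees, this procedure stabilises in finitely many steps.

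Once $T^*$ and $S$ have been produced, the definition of $\sim$ immediately gives $(T^*,f_2)\sim(S,f_1)$, whence $[T^*,f_2]=[S,f_1]=:w$. Then $v_2\leq[T^*,f_2]=w$, taking $T_2':=T^*\supseteq T_2$ with the trivial self-equivalence $(T^*,f_2)\sim(T^*,f_2)$, and $v_1\leq[S,f_1]=w$, taking $T_1':=S\supseteq T^*\supseteq T_1$ with $(S,f_1)\sim(S,f_1)$. For contractibility, any continuous image of a sphere in $|\mathcal{P}|$ lies in a finite subcomplex involving only finitely many vertices $u_1,\ldots,u_k$ of $\mathcal{P}$; iterated directedness produces a common upper bound $u$, and the sphere then sits in the contractible star of $u$, so all homotopy groups of $|\mathcal{P}|$ vanish. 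The hard part will be executing the iterative closure in the third paragraph --- verifying that finitely many enlargements suffice to make $\phi(V(T^*))$ span an admissible subtree containing $V(T^*)$ while preserving the rigid-support containment.
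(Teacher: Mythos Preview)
There is a genuine gap. You require $V(T^*)\subseteq V(S)=\phi(V(T^*))$; since $\phi$ is a bijection and both sets have the same finite cardinality, this forces $\phi(V(T^*))=V(T^*)$, i.e.\ $V(T^*)$ must be $\phi$-invariant. But a rigid permutation need not have any finite invariant subset containing a prescribed finite set. Concretely, take $f_1=\mathrm{id}_{\mathcal{T}}$ and $f_2=g$ for a hyperbolic element $g\in G$; then $\phi=g$, every $g$-orbit in $V(\mathcal{T})$ is infinite, and no finite $T^*$ can be $\phi$-invariant. Your iterative absorption of missing $\phi$- and $\phi^{-1}$-images will then run forever along the axis of $g$. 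The finiteness of the rigid support does not help here: the support of $\phi$ can even be empty while $\phi$ still moves every vertex.

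The paper's argument avoids this entirely by observing that you only need $T_1\subseteq S$, not $T^*\subseteq S$, to conclude $v_1\le[S,f_1]$. The inclusion $T_1\subseteq S=\phi(T^*)$ is equivalent to $(f_2^{-1}f_1)(V(T_1))\subseteq V(T^*)$, and this is arranged in one stroke by choosing $T^*$ from the outset to be any admissible tree containing the fixed finite set $T_2\cup (f_2^{-1}f_1)(T_1)$ together with a rigid support for $\phi$; no iteration is required. That $\phi(V(T^*))$ then spans an admissible tree follows because its complement in $V(\mathcal{T})$ is the disjoint union of the half-trees $g_Q(Q)$, one for each piece $Q$ of $\mathcal{T}\setminus\mathrm{int}(T^*)$, so the image is an intersection of half-trees and hence convex.
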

\begin{proof}
    We start by showing that $(\mathcal{P},\leq)$ is a poset. Let $u=[T_u,f_u],~v=[T_v,f_v],~w=[T_w,f_w]\in \mathcal{P}$. Clearly $u\leq u$. If $u\leq v$ and $v\leq u$, then Lemma \ref{lem:uniqueness-for-equal-maps} implies that $v=[T',f_u]$ for some unique tree $T'$ with $T_u\subseteq T'\subseteq T_u$. Thus, $u=v$. If $u\leq v$ and $v\leq w$, by Lemma \ref{lem:basic-properties-of-expansions}(1) we can first choose a representative for $v$ of the form $v=[T',f_u]$ with $T_u\subset T'$. Another application of Lemma \ref{lem:basic-properties-of-expansions}(1) shows that $w=[T'',f_u]$ for some $T''$ with $T'\subseteq T''$. Thus, $u\leq w$. This proves that $(\mathcal{P},\leq)$ is a poset.
    
    To see that $(\mathcal{P},\leq)$ is directed, we need to show that for all $u=[T_u,f_u],~v=[T_v,f_v]\in \mathcal{P}$ there is an element $w\in \mathcal{P}$ with $u,~v\leq w$. Since every finite subtree of $\mathcal{T}$ is contained in an admissible finite subtree of $\mathcal{T}$, we can choose an admissible tree $T'_u\subset \mathcal{T}$, which contains $T_u \cup (f_u^{-1}\circ f_v)(T_v)$ and is a support for $f_v^{-1}\circ f_u\in \mathrm{RP}_G(\mathcal{T},\mathcal{B})$. Then $T'_v:=(f_v^{-1}\circ f_u)(T'_u)\supset T_v$ is a support for $f_u^{-1}\circ f_v$. Thus, we can choose $w=[T_u',f_u]=[T_v',f_v]$, proving that $\leq$ is directed.

    Finally, $|\mathcal{P}|$ is contractible, since the geometric realisation of a directed poset is contractible.
\end{proof}

We say that an $n$-simplex $v_0<\cdots <v_n$ of $|\mathcal{P}|$ is \emph{elementary} if $v_0\preccurlyeq v_n$. In this case we clearly also have $v_i\prec v_j$ for every $i<j$.  We will now restrict to the full subcomplex $\mathfrak{X}$ of $|\mathcal{P}|$ consisting of elementary simplices. We call $\mathfrak{X}$ the \emph{Stein complex} associated with $\mathcal{P}$ with respect to $T_0$. The action of $\mathrm{RP}_G(\mathcal{T},\mathcal{B})$ on $\mathcal{P}$ preserves the relation $\preccurlyeq$. Thus, it restricts to an action on $\mathfrak{X}$. We will now prove that $\mathfrak{X}$ is also contractible. For $u\leq v$ we define the interval $[x,y]:=\left\{v\in \mathcal{P}\mid x\leq v\leq y\right\}$. Similarly, we define the intervals $(x,y]$, $[x,y)$ and $(x,y)$.

\begin{lem}\label{lem:intervals-are-contractible}
    Let $x,~y\in \mathcal{P}$ with $x< y$ and $x\not \prec y$. Then the geometric realisations $|(x,y)|$, $|(x,y]|$, $|[x,y)|$ and $|[x,y]|$ are contractible.
\end{lem}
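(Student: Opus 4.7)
The intervals $|[x,y]|$, $|[x,y)|$, and $|(x,y]|$ are contractible for trivial reasons, since each of the corresponding posets has either a minimum ($x$) or a maximum ($y$) that serves as a cone point of the geometric realisation. The real content is to show that $|(x,y)|$ is contractible under the assumption $x \not\prec y$, and my plan is to produce a deformation retraction of $|(x,y)|$ onto a subposet with a maximum.

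By Lemma \ref{lem:uniqueness-for-equal-maps} I fix representatives $x = [T_x, f]$ and $y = [T_y, f]$ with $T_x \subsetneq T_y$, so that the interval $[x, y]$ is order-isomorphic to the lattice of admissible subtrees of $T_y$ containing $T_x$. Let $T_m$ be the elementary expansion of $T_x$ obtained by attaching the (unique, by Lemma \ref{lem:unique-carets}) caret at every leaf of $T_x$ that is an interior vertex of $T_y$; this is the maximum elementary expansion of $T_x$ fitting inside $T_y$, and the hypothesis $x \not\prec y$ translates exactly to $T_m \subsetneq T_y$. Setting $m := [T_m, f]$, consider the subposet
\[
    E^+ := \{ v \in (x,y) : x \prec v \},
\]
whose elements are the classes $[T', f]$ for $T'$ ranging over non-trivial elementary expansions of $T_x$ contained in $T_m$. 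Using that $T_x \subsetneq T_y$ forces at least one leaf of $T_x$ to become interior in $T_y$ (cf.\ the argument in the proof of Corollary \ref{cor:admissible-trees-from-leaf-expansions}), the subposet $E^+$ is non-empty and has $m$ as its maximum. Hence $|E^+|$ is a cone on $m$, and in particular contractible.

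I then define a retraction $\phi \colon (x,y) \to E^+$ by $\phi([T, f]) := [\mathrm{Elem}(T), f]$, where $\mathrm{Elem}(T)$ is the elementary expansion of $T_x$ obtained by attaching a caret at every leaf of $T_x$ that is interior in $T$. One checks that $\mathrm{Elem}(T)$ sits strictly between $T_x$ and $T_y$: the strict lower bound uses that $T_x \subsetneq T$ forces some leaf of $T_x$ to become interior in $T$, while the strict upper bound follows from $\mathrm{Elem}(T) \subseteq T_m \subsetneq T_y$. The map $\phi$ is order-preserving (a larger $T$ interiorises more leaves of $T_x$), satisfies $\phi(v) \leq v$ pointwise (by construction $\mathrm{Elem}(T) \subseteq T$), and restricts to the identity on $E^+$. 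By the standard poset homotopy principle---two order-preserving self-maps of a poset with one pointwise dominated by the other induce homotopic maps on geometric realisations---the composition of $\phi$ with the inclusion $E^+ \hookrightarrow (x,y)$ is homotopic to $\mathrm{id}_{(x,y)}$. This exhibits $|E^+|$ as a deformation retract of $|(x,y)|$, giving the desired contractibility.

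The main point requiring care is the verification that $\phi$ is well-defined with values in $E^+$; this rests on the caret uniqueness of Lemma \ref{lem:unique-carets}, on Lemma \ref{lem:intersections-and-unions-of-trees} for admissibility of the intermediate trees, and on the observation above that any proper admissible overtree of $T_x$ must interiorise at least one leaf of $T_x$. The remaining order-theoretic checks on $\phi$ are straightforward.
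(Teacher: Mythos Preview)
Your proof is correct and follows essentially the same approach as the paper's. Both arguments define the identical poset map $\phi$ sending $v$ to the maximal elementary expansion of $x$ below $v$ (your $\mathrm{Elem}(T)$ is exactly the paper's $T_{v,m}$), use $\phi(v)\leq v$, and contract onto the point $m = y_0$; the only cosmetic difference is that you explicitly name the image subposet $E^+$ and argue via deformation retraction, while the paper packages the same zig-zag $v \geq \phi(v) \leq y_0$ as a conical contraction by citing Quillen.
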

\begin{proof}
    We give a proof for $|(x,y)|$. The other cases are analogous. By definition, there exist $f\in \mathrm{RP}_G(\mathcal{T},\mathcal{B})$ and admissible trees $T\subset T'$ such that $x=[T,f]$ and $y=[T',f]$. By Lemma \ref{lem:uniqueness-for-equal-maps}, for $v,w\in \left[x,y\right]$ with $v\leq w$, there exist unique trees $T_v$ and $T_w$ with $v=[T_v,f]$, $w=[T_w,f]$ and $T\subset T_v\subset T_w\subset T'$. In particular, for every $v\in \left[x,y\right]$ there is a unique maximal elementary expansion $v_0=[T_{v,m},f]$ of $x$ with $v_0\leq v$. Thus, we can define a map of posets $\phi: [x,y]\to [x,y]$, $v\mapsto v_0$. Moreover, since $x\not \prec y$, $y_0<y$. Since also $x<v$ implies $x<v_0$, the restriction $\phi: (x,y)\to (x,y)$ is well-defined and satisfies $v\geq \phi(v)\leq y_0$ for all $v\in (x,y)$. Thus, \cite[Section 1.5]{Qui-78} implies that there is a well-defined conical contraction of $|(x,y)|$ onto $y_0$. In particular, $|(x,y)|$ is contractible.
\end{proof}

\begin{prop}\label{prop:stei-cpx-contractible}
    The complex $\mathfrak{X}$ is contractible.
\end{prop}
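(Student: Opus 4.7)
The plan is to compare $\mathfrak{X}$ with the contractible complex $|\mathcal{P}|$ (Lemma \ref{lem:directed-poset}) by interpolating between them via a filtration measured by height difference. For $n\geq 1$, define $\mathfrak{X}_n \subseteq |\mathcal{P}|$ to be the subcomplex consisting of all simplices $\sigma = (v_0<\cdots<v_k)$ that are either elementary or satisfy $h(v_k)-h(v_0)\leq n$. Both conditions pass to faces (the elementarity because $T_{v_0}\subset T_{v_i}\subset T_{v_j}\subset T_{v_k}$ inherits the elementary-expansion property), so this is a genuine subcomplex. Since a single leaf expansion is by definition elementary, every simplex of height difference $1$ is elementary, and hence $\mathfrak{X}_1=\mathfrak{X}$, while $\bigcup_{n\geq 1}\mathfrak{X}_n=|\mathcal{P}|$. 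It therefore suffices to prove that each inclusion $\mathfrak{X}_{n-1}\hookrightarrow \mathfrak{X}_n$ for $n\geq 2$ is a homotopy equivalence and then pass to the colimit.

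Fix $n\geq 2$. A simplex lies in $\mathfrak{X}_n\setminus \mathfrak{X}_{n-1}$ precisely when its minimum $x$ and maximum $y$ satisfy $x\not\preccurlyeq y$ and $h(y)-h(x)=n$; its intermediate vertices sweep out $|(x,y)|$. Grouping new simplices by their endpoint pair $(x,y)$ therefore exhibits $\mathfrak{X}_n$ as the pushout of $\mathfrak{X}_{n-1}$ with $\bigsqcup_{(x,y)}|[x,y]|$ along $\bigsqcup_{(x,y)}\bigl(|[x,y)|\cup |(x,y]|\bigr)\subseteq \mathfrak{X}_{n-1}$, where the disjoint union runs over all non-elementary pairs with $h(y)-h(x)=n$. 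The families are disjoint because each new simplex determines its minimum and maximum uniquely, and the attaching locus really sits in $\mathfrak{X}_{n-1}$ since any proper subsimplex of $\{x,v_1,\ldots,v_{k-1},y\}$ omits either $x$ or $y$, reducing its height difference below $n$.

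By Lemma \ref{lem:intervals-are-contractible} the four pieces $|[x,y]|$, $|[x,y)|$, $|(x,y]|$ and $|(x,y)|$ are all contractible for every non-elementary pair. Thus $|[x,y)|\cup |(x,y]|$ is a union of two contractible subcomplexes meeting in the contractible $|(x,y)|$, hence itself contractible (Mayer--Vietoris plus Seifert--van Kampen). Consequently the inclusion $|[x,y)|\cup |(x,y]|\hookrightarrow |[x,y]|$ is a cofibration between contractible CW complexes and therefore a homotopy equivalence. The gluing lemma for pushouts along a cofibration that is a homotopy equivalence then forces $\mathfrak{X}_{n-1}\hookrightarrow \mathfrak{X}_n$ to be a homotopy equivalence. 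Induction and passage to the colimit give $\mathfrak{X}\simeq |\mathcal{P}|$, which is contractible.

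The real geometric input is supplied entirely by Lemma \ref{lem:intervals-are-contractible}; the remaining work is bookkeeping. The main obstacle is to state the pushout cleanly, i.e.\ to verify that $\mathfrak{X}_{n-1}\cap |[x,y]|=|[x,y)|\cup|(x,y]|$ for each non-elementary pair and that intervals for distinct pairs contribute genuinely disjoint cells at stage $n$, so that a simultaneous attachment is well defined. Once this is established, the Stein--Brown style collapsing argument outlined above concludes the proof.
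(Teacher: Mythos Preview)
Your argument is correct and follows essentially the same approach as the paper: both build $|\mathcal{P}|$ from $\mathfrak{X}$ by attaching the non-elementary intervals $|[x,y]|$ in increasing order of $r([x,y])=h(y)-h(x)$, invoking Lemma~\ref{lem:intervals-are-contractible} to see that each $|[x,y]|$ is coned off along the contractible $|[x,y)|\cup|(x,y]|$ and hence the homotopy type is unchanged. One small quibble: your justification that $\mathfrak{X}_1=\mathfrak{X}$ (``a single leaf expansion is elementary, hence height difference $1$ is elementary'') tacitly assumes the converse, namely that height difference $1$ forces a single leaf expansion; this is harmless since one can simply take $\mathfrak{X}$ itself as the base of the filtration and run your pushout argument for all $n\geq 1$.
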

    \begin{proof}
        Since $|\mathcal{P}|$ is contractible by Lemma \ref{lem:directed-poset}, it suffices to show that $|\mathcal{P}|$ can be constructed inductively from $\mathfrak{X}$ without changing the homotopy type. For a closed interval $[x,y]$ in $\mathcal{P}$, we define $r([x,y]):=h(y)-h(x)$, where we recall that $h$ maps a vertex $[T,f]$ to the number of leaves of $T$. We inductively attach the contractible complexes $|[x,y]|$ to $\mathfrak{X}$ in increasing order of $r$-value. The subcomplex $|[x,y]|$ is attached along the complex $|(x,y]|\cup |[x,y)|$. The latter is contractible, since by Lemma \ref{lem:intervals-are-contractible} it is the union of two contractible complexes whose intersection $|(x,y)|$ is contractible. This shows that attaching the complexes $|[x,y]|$ up to a given $r$-value does not change the homotopy type of $\mathfrak{X}$. Thus, $|\mathcal{P}|$ is homotopy equivalent to $\mathfrak{X}$, since it is the result of attaching all intervals $|[x,y]|$ to $\mathfrak{X}$.
    \end{proof}

    We now show that $\mathfrak{X}$ admits the structure of a cube complex, where the cubes of dimension $n$ are the geometric realisations of the intervals $[x,y]$ with $x\preccurlyeq y$ and $n=r([x,y])$. We start by showing that the geometric realisations of such intervals can canonically be identified with cubes. This is an immediate consequence of:

    \begin{lem}\label{lem:boolean}
        If $x\preccurlyeq y$, then $[x,y]$ is a finite Boolean lattice.
    \end{lem}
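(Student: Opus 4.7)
The plan is to identify $[x,y]$ with the Boolean lattice of subsets of a finite set of carets naturally associated to the pair. By definition of $\preccurlyeq$ together with Lemma~\ref{lem:basic-properties-of-expansions}(1), I may choose representatives $x=[T,f]$ and $y=[T',f]$ in which $T'$ is an elementary expansion of $T$. By Corollary~\ref{cor:admissible-trees-from-leaf-expansions} there is a uniquely determined finite set $L$ of leaves of $T$ at which carets are attached to form $T'$; I denote by $C_v$ the caret attached at $v\in L$ (a $G$-translate of the standard caret $S_{\nu(v)}$ from Definition~\ref{defn:carets}), so that $T'=T\cup\bigcup_{v\in L} C_v$ and each $C_v$ meets the rest of $\mathcal{T}$ only at its root $v$.

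For every subset $S\subseteq L$ I set $T_S:=T\cup\bigcup_{v\in S} C_v$. Iterating Lemma~\ref{lem:intersections-and-unions-of-trees} applied to the admissible trees $\{T\cup C_v:v\in S\}$, each containing $T$, shows that $T_S$ is admissible. Since $T\subseteq T_S\subseteq T'$, the class $z_S:=[T_S,f]$ lies in $[x,y]$, and the map $S\mapsto z_S$ is order-preserving and respects both intersections and unions.

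It remains to show this map is a bijection. Given $z\in[x,y]$, Lemma~\ref{lem:uniqueness-for-equal-maps} provides a unique admissible tree $T_z$ with $T\subseteq T_z\subseteq T'$ and $z=[T_z,f]$. I set $S:=\{v\in L:v\text{ is interior in }T_z\}$ and claim $T_z=T_S$. The key step is the dichotomy $T_z\cap C_v\in\{\{v\},C_v\}$ for each $v\in L$: if $v\notin S$ then $v$ is a leaf of $T_z$, so no edge of $C_v$ incident to $v$ lies in $T_z$, and connectedness forces $T_z\cap C_v=\{v\}$; if $v\in S$ then $v$ is an interior vertex of the admissible tree $T_z$, and the moreover clause of Lemma~\ref{lem:unique-carets} forces $T_z$ to contain the full caret $C_v$. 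This proves surjectivity, while uniqueness in Lemma~\ref{lem:uniqueness-for-equal-maps} together with the obvious equivalence $S_1\subseteq S_2\iff T_{S_1}\subseteq T_{S_2}$ yields an order isomorphism $[x,y]\cong 2^L$, a finite Boolean lattice.

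The main obstacle is the dichotomy ruling out ``partial carets'' in intermediate admissible trees; this is exactly where the minimality statement in Lemma~\ref{lem:unique-carets} enters essentially, since it says the full caret $C_v$ is the smallest admissible expansion of $T$ in which the leaf $v$ becomes an interior vertex.
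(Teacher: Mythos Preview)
Your proof is correct and follows the same approach as the paper's: identify $[x,y]$ with the power set of the set of leaves of $T$ expanded to form $T'$, using Lemma~\ref{lem:uniqueness-for-equal-maps} to extract a unique intermediate tree for each $z\in[x,y]$. The paper's version is considerably terser and leaves the caret dichotomy implicit, whereas you spell it out explicitly via the moreover clause of Lemma~\ref{lem:unique-carets}; two small notes are that the definition of $\preccurlyeq$ already furnishes the representatives (so Lemma~\ref{lem:basic-properties-of-expansions}(1) is not needed), and Corollary~\ref{cor:admissible-trees-from-leaf-expansions} concerns expansions from $T_0$ rather than from $T$, though the set $L$ you want is immediate from the definition of elementary expansion.
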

    \begin{proof}
        Let $x=[T,f]$ and let $y=[T',f]$, where $T'$ is an elementary expansion of $T$. Let $z\in [x,y]$. By Lemma \ref{lem:uniqueness-for-equal-maps} there is a unique elementary expansion $T_z$ with $z=[T_z,f]$ and $T\subset T_z \subset T'$. It follows that $[x,y]$ is a Boolean lattice on the set of leaf expansions needed to obtain $T'$ as an elementary expansion of $T$.
    \end{proof}

    To prove that these cubes equip $\mathfrak{X}$ with the structure of a cube complex, we need to show that any two cubes intersect in a cube. This will require a preliminary result.

    \begin{lem}\label{lem:intersections-of-cubes}
        Assume $x\preccurlyeq y$ and $z\preccurlyeq w$. Denote $\mathcal{S}=[x,y]\cap[z,w]$. Then for any $p,q\in \mathcal{S}$ there are $s,t\in \mathcal{S}$ such that $s\leq p,q\leq t$.
    \end{lem}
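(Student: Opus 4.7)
The plan is to construct candidates $t := [T_p \cup T_q, f]$ and $s := [T_p \cap T_q, f]$ using tree-theoretic unions and intersections, and check that both lie in $\mathcal{S}$.

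First I would fix representatives: using the elementary expansions $x \preccurlyeq y$ and $z \preccurlyeq w$, write $x = [T, f]$, $y = [T', f]$, $z = [S, g]$, $w = [S', g]$ with $T \subseteq T'$ and $S \subseteq S'$. By Lemma \ref{lem:uniqueness-for-equal-maps} there are unique admissible trees $T_p, T_q$ with $T \subseteq T_p, T_q \subseteq T'$ representing $p$ and $q$ via $f$, and analogously unique $S_p, S_q$ with $S \subseteq S_p, S_q \subseteq S'$ representing $p,q$ via $g$. Setting $h := g^{-1} \circ f$, we have $(T_p, f) \sim (S_p, g)$ and $(T_q, f) \sim (S_q, g)$. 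By Lemma \ref{lem:intersections-and-unions-of-trees}, both $T_p \cap T_q$ and $T_p \cup T_q$ are admissible, and the chain $T \subseteq T_p \cap T_q \subseteq T_p \cup T_q \subseteq T'$ immediately yields $s, t \in [x, y]$ with $s \leq p, q \leq t$.

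To place $t$ in $[z, w]$ I would verify that $(T_p \cup T_q, f) \sim (S_p \cup S_q, g)$. The bijection
$$h(V(T_p \cup T_q)) = h(V(T_p)) \cup h(V(T_q)) = V(S_p) \cup V(S_q) = V(S_p \cup S_q)$$
is immediate because $h$ is a single permutation of $V(\mathcal{T})$. Any interior vertex of $T_p$ has all its incident $\mathcal{T}$-edges in $T_p$, hence also in $T_p \cup T_q$, so ${\rm int}(T_p) \subseteq {\rm int}(T_p \cup T_q)$; because enlarging a rigid support preserves the rigid support condition (pieces only shrink), ${\rm int}(T_p \cup T_q)$ is a rigid support of $h$. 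Hence $t = [S_p \cup S_q, g] \in \mathcal{S}$.

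The hard part will be placing $s$ in $[z, w]$, i.e.\ verifying $(T_p \cap T_q, f) \sim (S_p \cap S_q, g)$. The corresponding bijection $h(V(T_p \cap T_q)) = V(S_p \cap S_q)$ is straightforward, so the crux is to show that ${\rm int}(T_p \cap T_q) = {\rm int}(T_p) \cap {\rm int}(T_q)$ is itself a rigid support of $h$. Pieces of $\mathcal{T} \setminus {\rm int}(T_p \cap T_q)$ are strictly larger than the pieces of $\mathcal{T} \setminus {\rm int}(T_p)$, and each is a union of such smaller pieces together with some vertices in ${\rm int}(T_p) \setminus {\rm int}(T_q)$, so one must check that the local $G$-constants coming from the $T_p$-decomposition agree across each enlarged piece. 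I would argue this by exploiting that $h$ restricts to labelled tree isomorphisms $T_p \to S_p$ and $T_q \to S_q$: the $T_p$-isomorphism forces neighbouring $T_p$-piece constants (those sharing an interior vertex of $T_p$) to coincide, while the $T_q$-decomposition bridges across vertices in ${\rm int}(T_p) \setminus {\rm int}(T_q)$, and symmetrically. Combining both sources of equalities yields a single $G$-element on each piece of $\mathcal{T} \setminus {\rm int}(T_p \cap T_q)$. With this established, $s = [S_p \cap S_q, g] \in \mathcal{S}$ and the lemma follows.
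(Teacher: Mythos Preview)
Your overall strategy coincides with the paper's: set $s=[T_p\cap T_q,f]$ and $t=[T_p\cup T_q,f]$ and verify membership in both intervals. The treatment of $t$ is correct. The paper itself asserts $s=[T_p\cap T_q,f_1]=[S_p\cap S_q,f_2]$ ``by construction'' without spelling out the support condition, so it is reasonable that you singled this out.

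However, the argument you sketch for this step rests on a false premise. The map $h=g^{-1}\circ f$ does \emph{not} restrict to a labelled tree isomorphism $T_p\to S_p$: the equivalence $\sim$ only demands a bijection $V(T_p)\to V(S_p)$, and $h$ may scramble the interior vertices of $T_p$ arbitrarily. Your mechanism for making ``neighbouring $T_p$-piece constants coincide'' therefore has no force; in general the $G$-elements on distinct pieces of $\mathcal{T}\setminus\mathrm{int}(T_p)$ are unrelated.

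The remedy is that no bridging is needed, because your assertion that the pieces of $\mathcal{T}\setminus\mathrm{int}(T_p\cap T_q)$ are \emph{strictly larger} is itself mistaken. Every leaf $v$ of $T_p\cap T_q$ is a leaf of $T_p$ or of $T_q$ (else $v\in\mathrm{int}(T_p)\cap\mathrm{int}(T_q)=\mathrm{int}(T_p\cap T_q)$). If, say, $v$ is a leaf of $T_p$, then the unique edge of $T_p\cap T_q$ at $v$ is the unique edge of $T_p$ at $v$, so the piece of $\mathcal{T}\setminus\mathrm{int}(T_p\cap T_q)$ based at $v$ is \emph{equal} to the piece of $\mathcal{T}\setminus\mathrm{int}(T_p)$ based at $v$ --- both are simply the branch of $\mathcal{T}$ at $v$ on the far side of that edge. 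Rigidity of $h$ on that piece then comes directly from the fact that $\mathrm{int}(T_p)$ is already a support (and symmetrically when $v$ is a leaf of $T_q$), giving $s\in\mathcal{S}$.
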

    \begin{proof}
        The conclusion is trivial if $p=q$, so assume $p\prec q$. By our assumptions there are representatives of the form $x=[T,f_1]$, $y=[\overline{T},f_1]$, $z=[S,f_2]$, $w=[\overline{S},f_2]$ such that $\overline{T}$ (resp. $\overline{S}$) is an elementary expansion of $T$ (resp. $S$). By Lemma \ref{lem:uniqueness-for-equal-maps} there are thus elementary expansions $S_p$ and $S_q$ of $S$, as well as elementary expansions $T_p$ and $T_q$ of $T$ such that
        \[
            p=[T_p,f_1]=[S_p,f_2] \mbox{ and } q=[T_q,f_1]=[S_q,f_2].
        \]

        Then the restrictions 
        \[
        f_2^{-1}\circ f_1|_{\mathcal{T}\setminus \mathrm{int}(T_p)}: \mathcal{T}\setminus \mathrm{int}(T_p) \to \mathcal{T}\setminus \mathrm{int}(S_p)
        \]
        and
        \[
        f_2^{-1}\circ f_1|_{\mathcal{T}\setminus \mathrm{int}(T_q)}: \mathcal{T}\setminus \mathrm{int}(T_q) \to \mathcal{T}\setminus \mathrm{int}(S_q)
        \]        
        are well-defined rigid bijections with $(f_2^{-1}\circ f_1)(T_p)=S_p$, $(f_2^{-1}\circ f_1)(T_q)=S_q$.

        We deduce that $(f_2^{-1}\circ f_1)(T_p\cup T_q)=S_p\cup S_q$, $(f_2^{-1}\circ f_1)(T_p\cap T_q)=S_p\cap S_q$ and the restriction of $f_2^{-1}\circ f_1$ to $\mathcal{T}\setminus \mathrm{int}(T_p\cup T_q)$ is rigid. By Lemma \ref{lem:intersections-and-unions-of-trees} $T_p\cap T_q$, $T_p\cup T_q$, $S_p\cap T_q$, and $S_p\cup S_q$ are admissible and it is easy to see that they are elementary expansions of $T$, resp. $S$. We can define the elements $s=[T_p\cap T_q,f_1]$ and $t=[T_p\cup T_q,f_1]$. By construction they satisfy $s\leq p,~q\leq t$ and $s,t\in [x,y]$. Since, also by construction, $s=[T_p\cap T_q,f_1]=[S_p\cap S_q,f_2]$ and $t=[T_p\cup T_q,f_1]=[S_p\cup S_q,f_2]$, we also have $s,t\in [z,w]$. Thus, $s,t\in \mathcal{S}$, completing the proof.
    \end{proof}

    \begin{prop}
        We can equip $\mathfrak{X}$ with the structure of a cube complex, where the cubes are defined by the Boolean lattices on the intervals $[x,y]$, where $x\preccurlyeq y$.
    \end{prop}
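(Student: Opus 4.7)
The plan is to verify the three standard requirements for a cube complex: each closed cube is isometric to a Euclidean cube, every face of a cube is a cube, and the intersection of any two cubes is a common face (possibly empty).

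First, for any interval $[x,y]$ with $x \preccurlyeq y$, Lemma \ref{lem:boolean} identifies $[x,y]$ with the Boolean lattice on the set of leaf expansions needed to pass from a representative tree $T$ to its elementary expansion $T'$. The geometric realisation of such a Boolean lattice of rank $n = r([x,y])$ is canonically the barycentric subdivision of the standard $n$-cube, so it can be given the natural Euclidean cube structure. Faces of this cube correspond to sub-intervals $[x',y']$ with $x \le x' \preccurlyeq y' \le y$ (again Boolean by the same lemma applied to the restricted sets of expansions), so the face condition is automatic.

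The heart of the proof is the intersection property. Fix two cubes $C_1 = [x,y]$ and $C_2 = [z,w]$ with $x \preccurlyeq y$ and $z \preccurlyeq w$, and set $\mathcal{S} = C_1 \cap C_2$. If $\mathcal{S} = \emptyset$ we are done, so assume it is non-empty. By Lemma \ref{lem:intersections-of-cubes}, for any $p,q \in \mathcal{S}$ there exist $s,t \in \mathcal{S}$ with $s \le p,q \le t$. Iterating this (or, more cleanly, taking $s$ and $t$ for any pair and then pairing $s$ with any other element, which works because $\mathcal{S}$ is finite as $[x,y]$ is finite by Lemma \ref{lem:boolean}) produces a unique minimum $s_0$ and unique maximum $t_0$ in $\mathcal{S}$, and $\mathcal{S} = [s_0,t_0]$. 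Since $s_0, t_0 \in [x,y]$ with $x \preccurlyeq y$, Lemma \ref{lem:uniqueness-for-equal-maps} (together with the proof of Lemma \ref{lem:boolean}) furnishes trees $T_{s_0} \subset T_{t_0}$ with $s_0 = [T_{s_0},f]$, $t_0 = [T_{t_0},f]$ for a common $f$, and $T_{t_0}$ is an elementary expansion of $T_{s_0}$, so $s_0 \preccurlyeq t_0$. Hence $\mathcal{S}$ is itself a Boolean interval of the form required, which we declare to be the intersection cube.

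It remains to check that $\mathcal{S} = [s_0,t_0]$ is a face of both $C_1$ and $C_2$. This follows because $[s_0,t_0]$ is a sub-interval of $[x,y]$ with $x \le s_0 \preccurlyeq t_0 \le y$, and symmetrically for $[z,w]$; by the face description in the first paragraph this is precisely a face of each cube. The main obstacle is the finiteness/unique-min-max step, but this is handled by Lemma \ref{lem:intersections-of-cubes} combined with the finiteness of the Boolean lattice $[x,y]$, and the compatibility of the representatives chosen for $s_0$ and $t_0$ is precisely the content of Lemma \ref{lem:uniqueness-for-equal-maps}. This completes the verification of the cube complex axioms.
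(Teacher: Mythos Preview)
Your proof is correct and follows essentially the same approach as the paper: use Lemma~\ref{lem:intersections-of-cubes} together with finiteness of the Boolean interval to extract a global minimum $s_0$ and maximum $t_0$ in $\mathcal{S}$, then argue $s_0\preccurlyeq t_0$ so that $[s_0,t_0]$ is a common subcube. The only step you leave slightly implicit is the containment $[s_0,t_0]\subseteq\mathcal{S}$ (which the paper notes by observing that $[s_0,t_0]$ is a subinterval of both $[x,y]$ and $[z,w]$); otherwise your argument is a more detailed version of the paper's, with the paper deducing $s_0\preccurlyeq t_0$ in one line from $x\preccurlyeq s_0\leq t_0\preccurlyeq y$ where you unpack this via Lemma~\ref{lem:uniqueness-for-equal-maps}.
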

    \begin{proof}
        It suffices to show that for any two cubes $[x,y]$ and $[z,w]$ the intersection $\mathcal{S}=[x,y]\cap [z,w]$ is a cube. We assume that $\mathcal{S}\neq \emptyset$. Since $\mathcal{S}$ is finite, Lemma \ref{lem:intersections-of-cubes} implies that there are $s,t\in \mathcal{S}$ with $\mathcal{S}\subseteq [s,t]$. Since $[s,t]$ is a subinterval of both $[x,y]$ and $[z,w]$, we have $\mathcal{S}=[s,t]$. Finally, since $x\preccurlyeq s\leq t \preccurlyeq y$, we have $s\preccurlyeq t$, implying that $[x,y]$ and $[z,w]$ intersect in the common subcube $[s,t]$.
    \end{proof}

    \begin{defn}
        We will call the complex $\mathfrak{X}$ equipped with the above cubical structure, the \emph{Stein--Farley complex} associated to the rigid permutation group $\mathrm{RP}_G(\mathcal{T},\mathcal{B})$ with respect to $T_0$.
    \end{defn}

    \begin{rem}
        The only way in which $\mathfrak{X}$ depends on the finite set $\mathcal{B}$ is via the assumption that $\mathcal{B}\subset T_0$. In particular, if $\mathcal{B}\subset \mathcal{B}'$, then we can choose finite trees $T_0\subset T_0'$ with $\mathcal{B}\subset V(T_0)$ and $\mathcal{B}'\subset V(T_0')$. With these choices the Stein--Farley complex associated to $\mathrm{RP}_G(\mathcal{T},\mathcal{B}')$ with respect to $T'_0$ is naturally a subcomplex of the Stein--Farley complex associated to $\mathrm{RP}_G(\mathcal{T},\mathcal{B})$ with respect to $T_0$.
    \end{rem}

\section{Leaf expansions and the history of trees}
\label{sec:expansions-of-trees}

In this section we discuss the sequence of leaf expansions defining a tree, its number of leaves of each type and the equivalence of pairs $(T,f)$. 

We identify the finite set of gates $\mathfrak{G}$ with $\left\{1,\cdots, k\right\}$. By Corollary \ref{cor:admissible-trees-from-leaf-expansions} for every admissible tree $T\subset \mathcal{T}$ there is an up to reordering unique sequence $(v_1,\cdots, v_n)$ of vertices of $T$ such that $T$ is obtained from $T_0$ by inductively expanding these vertices. For $i\in \left\{1,\cdots, k\right\}$ we denote by $N_i(T)$ the number of leaf expansions of type $i$ used to obtain $T$ from $T_0$ and by $L_i(T)$ the number of admissible leaves of $T$ of type $i$; we call the $k$-tuple $(N_1(T),\dots, N_k(T))$ the \emph{history of $T$}. Denote, moreover, by $M_{ij}$ the number of terminal leaves of type $i$ of the caret $S_j$ (see Definition 
\ref{defn:carets}). This defines a matrix $M=\left(M_{ij}\right)\in \mathbb{N}_0^{k\times k}$. Let further $I(T)$ be the number of interior vertices of $T$, $I_j$ be the number of interior vertices of $S_j$ and $I_0$ be the number of interior vertices of $T_0$.

\begin{lem}\label{lem:linear-algebra-determines-counts}
    Let $T\subset \mathcal{T}$ be an admissible tree. Then
    \[
        \left(\begin{array}{c}L_1(T)\\ \vdots \\ L_k(T)\end{array}\right)= \left(\begin{array}{c}L_1(T_0)\\ \vdots \\ L_k(T_0)\end{array}\right)+ (M - Id)\left(\begin{array}{c}N_1(T)\\ \vdots \\ N_k(T)\end{array}\right)
    \]
    and
    \[
        I(T) = I_0 + \left( I_1,  \cdots , I_k \right) \left(\begin{array}{c}N_1(T)\\ \vdots \\ N_k(T)\end{array}\right).
    \]
\end{lem}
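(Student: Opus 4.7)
The plan is to prove both formulas by induction on the total number of leaf expansions $n = \sum_{j=1}^{k} N_j(T)$ needed to build $T$ from $T_0$. Corollary \ref{cor:admissible-trees-from-leaf-expansions} guarantees that the sequence of expansions producing $T$ is unique up to reordering, so the integer $n$ (and more generally the history $(N_1(T),\dots,N_k(T))$) is a well-defined invariant of $T$, making the induction consistent.

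The base case $n=0$ is immediate: $T = T_0$ gives $N_i(T)=0$ for all $i$, so both equations collapse to the tautologies $L_i(T_0)=L_i(T_0)$ and $I(T_0)=I_0$.

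For the inductive step, I would pick an admissible tree $T$ obtained from a smaller admissible tree $T'$ by a single leaf expansion of type $j$ at a leaf $v$. By Lemma \ref{lem:unique-carets}, this expansion is realised by attaching the caret $S_j$ to $T'$ along the identification $w_j \leftrightarrow v$. From this attachment one reads off how each of the three counts changes. First, $N_i(T) - N_i(T') = \delta_{ij}$, so the history vector changes by the standard basis vector $e_j$. Second, the leaf $v$ of type $j$ ceases to be a leaf, while the $M_{ij}$ terminal leaves of $S_j$ of type $i$ become new leaves of $T$; hence $L_i(T) - L_i(T') = M_{ij} - \delta_{ij}$, which is exactly the $i$-th coordinate of $(M-\mathrm{Id})e_j$. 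Third, every vertex newly contributed by $S_j$ becomes interior in $T$, so $I(T) - I(T') = I_j$ by the definition of $I_j$. Combining these differences with the inductive hypothesis applied to $T'$ and using the linearity in $N(T)$ of the right-hand sides yields both asserted formulas for $T$.

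I do not anticipate a serious obstacle here: the proof is essentially a bookkeeping exercise, and the decisive structural input is the caret description from Lemma \ref{lem:unique-carets}, which makes each expansion step contribute a fixed, type-dependent increment to the leaf vector and the interior-vertex count. The only point that requires a small amount of care is unpacking the convention for what $I_j$ tallies inside the caret $S_j$ (in particular, the status of the distinguished vertex $w_j$), so that the formula $I(T)-I(T') = I_j$ is consistent across every expansion; once this is lined up, the induction goes through without modification.
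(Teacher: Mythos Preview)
Your proposal is correct and follows essentially the same approach as the paper: compute the increment to the leaf vector and interior-vertex count caused by a single leaf expansion of type $j$ (namely $(M-\mathrm{Id})e_j$ and $I_j$, using Lemma~\ref{lem:unique-carets}), then sum over all expansions. The paper states this per-step increment and stops, while you spell out the induction explicitly; the content is identical, and your caveat about the bookkeeping of $w_j$ resolves cleanly once one notes that the expanded leaf $v$ is an interior vertex of the caret $S_j$, so it is already counted in $I_j$.
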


\begin{proof}
    By Lemma \ref{lem:unique-carets} a leaf expansion of type $i$ of a tree is given by attaching the tree $S_i$ to a leaf of type $i$ along $w_i$. Thus, such an expansion increases the total leaf count by
    \[
        \left(\begin{array}{c} M_{1i}\\ \vdots \\ M_{ki}\end{array}\right) - e_i = \left(M-Id\right)e_i,
    \]
    where $e_i$ denotes the $i$-th standard basis vector and the total number of interior vertices increases by $I_i$. This proves the assertion.
\end{proof}

\begin{lem}\label{lem:equivalence-of-pairs}
    Let $T_1,~ T_2\subset \mathcal{T}$ be admissible trees and let $f_1\in \mathrm{RP}_G(\mathcal{T},\mathcal{B})$. There is an element $f_2\in \mathrm{RP}_G(\mathcal{T},\mathcal{B})$ with $[T_1,f_1]=[T_2,f_2]$ if and only if $I(T_1)=I(T_2)$ and $L_i(T_1)=L_i(T_2)$ for $1\leq i \leq k$.
\end{lem}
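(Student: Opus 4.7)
The plan is to prove the two implications separately: the reverse direction by direct construction, the forward direction by a piece-chasing argument.

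For the reverse direction, assume $I(T_1)=I(T_2)$ and $L_i(T_1)=L_i(T_2)$ for every $i\in\mathfrak{G}$. I will build a rigid permutation $\phi\in\mathrm{RP}_G(\mathcal{T},\mathcal{B})$ that restricts to a bijection $V(T_1)\to V(T_2)$ with rigid support $\mathrm{int}(T_1)$, and then take $f_2:=f_1\circ\phi^{-1}$. To construct $\phi$, first fix, for each type $i$, an arbitrary bijection between the $L_i(T_1)$ leaves of $T_1$ of type $i$ and the $L_i(T_2)$ leaves of $T_2$ of type $i$. For each such pair $(v,w)$, choose $g_{v,w}\in G$ taking the half-edge of $T_1$ at $v$ to the half-edge of $T_2$ at $w$; this is possible because $G$ acts transitively on half-edges of $\mathcal{T}$ of any given label. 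Since $g_{v,w}$ is a graph automorphism, it carries the piece $P_v$ of $\mathcal{T}\setminus\mathrm{int}(T_1)$ onto the piece $P_w$ of $\mathcal{T}\setminus\mathrm{int}(T_2)$, so declaring $\phi|_{P_v}:=g_{v,w}|_{P_v}$ defines $\phi$ on $V(\mathcal{T})\setminus\mathrm{int}(T_1)$ as a rigid bijection onto $V(\mathcal{T})\setminus\mathrm{int}(T_2)$. Using $|\mathrm{int}(T_1)|=|\mathrm{int}(T_2)|$ and $\mathcal{B}\subset\mathrm{int}(T_0)\subset\mathrm{int}(T_1)\cap\mathrm{int}(T_2)$, I extend $\phi$ to $\mathrm{int}(T_1)$ by any bijection onto $\mathrm{int}(T_2)$ fixing $\mathcal{B}$ pointwise. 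The resulting $\phi$ lies in $\mathrm{RP}_G(\mathcal{T},\mathcal{B})$, has rigid support $\mathrm{int}(T_1)$, and restricts to a bijection $V(T_1)\to V(T_2)$, yielding $(T_1,f_1)\sim(T_2,f_2)$.

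For the forward direction, set $\phi:=f_2^{-1}\circ f_1\in\mathrm{RP}_G(\mathcal{T},\mathcal{B})$. By hypothesis $\phi$ restricts to a bijection $V(T_1)\to V(T_2)$ with rigid support $\mathrm{int}(T_1)$, so for each leaf $v$ of $T_1$ there is $g_v\in G$ with $\phi|_{P_v}=g_v|_{P_v}$. The central claim is that $\phi$ sends leaves of $T_1$ bijectively onto leaves of $T_2$ in a type-preserving way; once this is known, $L_i(T_1)=L_i(T_2)$ follows from the $G$-invariance of the labelling, and $I(T_1)=I(T_2)$ from $|V(T_1)|=|V(T_2)|$. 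To prove that $g_v\cdot v$ is a leaf of $T_2$, I argue by contradiction: suppose $g_v\cdot v\in\mathrm{int}(T_2)$; then by admissibility every $\mathcal{T}$-neighbour of $g_v\cdot v$ lies in $V(T_2)$, so choose the neighbour $u'$ facing the piece $g_v\cdot P_v$. Then $u'\in g_v\cdot P_v$, so $u'=\phi(v'')$ for some $v''\in P_v$ with $v''\neq v$. On the other hand $u'\in V(T_2)=\phi(\mathrm{int}(T_1))\cup\{g_{v'}\cdot v'\}_{v'}$, and the pairwise disjointness of the images $\phi(P_{v'})=g_{v'}\cdot P_{v'}$ (together with $u'\neq g_v\cdot v$) rules out $u'=g_{v'}\cdot v'$ for any leaf $v'$. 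Hence $u'=\phi(u)$ for some $u\in\mathrm{int}(T_1)$, and since $u\neq v''$ this contradicts the injectivity of $\phi$. A symmetric argument applied to $\phi^{-1}$ then promotes the inclusion $\phi(\mathrm{leaves}(T_1))\subseteq\mathrm{leaves}(T_2)$ to an equality.

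I expect the main technical obstacle to be this contradiction step in the forward direction, where the bijectivity of $\phi$, the pairwise disjointness of the pieces, and the full-degree property of interior vertices of $T_2$ must all be combined simultaneously. Everything else reduces either to elementary cardinality arithmetic or to the $G$-transitivity on labelled half-edges that has already been used throughout the paper.
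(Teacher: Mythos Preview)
Your proposal is correct and follows essentially the same approach as the paper: the reverse direction is the same construction (type-preserving bijection on leaves, $G$-transitivity on labelled half-edges, arbitrary bijection on interiors fixing $\mathcal{B}$), and the forward direction extracts the same conclusion from the rigidity of $\phi=f_2^{-1}f_1$. If anything, you are more explicit than the paper, which simply asserts that the equivalence forces $f_2$ to map $\mathcal{T}\setminus\mathrm{int}(T_2)$ rigidly onto $\mathcal{T}\setminus\mathrm{int}(T_1)$ (and hence leaves to leaves type-preservingly), whereas you supply the contradiction argument showing leaves go to leaves; both arguments ultimately rely on the symmetry of $\sim$, which the paper states without proof.
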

\begin{proof}
    After left-multiplying by $f_1^{-1}$ if needed, we may assume that $f_1=id_{\mathcal{T}}$. Then $[T_1,id_{\mathcal{T}}]=[T_2,f_2]$ if and only if $f_2$ defines a bijection $T_2\to T_1$ with $f_2|_{\mathcal{B}}=id_{\mathcal{B}}$ and a rigid map $f_2: \mathcal{T}\setminus {\rm int}(T_2)\to \mathcal{T}\setminus {\rm int}(T_1)$.  In particular, $f_2$ bijectively maps interior vertices of $T_2$ to interior vertices of $T_1$ and leaves of $T_2$ of type $i$ to leaves of $T_1$ of type $i$. This implies that $I(T_1)=I(T_2)$ and $L_i(T_1)=L_i(T_2)$ for $1\leq i \leq k$. 

    Conversely, if $I(T_1)=I(T_2)$ and $L_i(T_1)=L_i(T_2)$ for $1\leq i \leq k$, we define $f_2$ with $[T_1,id_{\mathcal{T}}]=[T_2,f_2]$ as follows. We first choose a bijection $\phi: T_2\to T_1$ with $\phi|_{\mathcal{B}}=id_{\mathcal{B}}$ that maps interior vertices to interior vertices and leaves of type $i$ to leaves of type $i$. Since by our assumptions the action of $G$ on half-edges labelled by $i$ is transitive, for every leaf $v$ of $T_2$ of type $i$ there is an element $g_v\in G$ with $g_v\cdot v= \phi(v)$. There is thus an element $f_2\in \mathrm{RP}_G(\mathcal{T},\mathcal{B})$ that coincides with $\phi$ on $T_2$ and with $g_v$ on the connected component of $\mathcal{T}\setminus {\rm int}(T_2)$ based at $v$. By construction $f_2$ has support $\mathrm{int}(T_2)$ and thus $[T_1,id_{\mathcal{T}}]=[T_2,f_2]$.
\end{proof}

Lemma \ref{lem:equivalence-of-pairs} shows that the number of leaves of each type of a tree representing a vertex $x\in \mathfrak{X}$ does not depend on the choice of representative. Thus, we will subsequently refer to the \emph{set of leaves of $x$} as the set of leaves of any tree $T$ with $x=[T,f]$; it is well-defined up to the identification given by $\sim$. Moreover, if $T_1$ and $T_2$ are trees with $I(T_1)=I(T_2)$ and $L_i(T_1)=L_i(T_2)$ for $1\leq i \leq k$ we will say that $T_1$ can be \emph{rearranged} to $T_2$.

\begin{defn}
    We say that a rigid permutation group $\mathrm{RP}_G(\mathcal{T},\mathcal{B})$ together with an admissible set of gates $\mathfrak{G}$ and an admissible base tree $T_0\subset \mathcal{T}$ has the \emph{viral expansion property} if $L_i(T_0)\geq 2$ and $M_{ii}\geq 3$ for every $i\in \left\{1,\cdots, k\right\}$. 
\end{defn}

\begin{rem}\label{rem:viral-expansion-property}
    If $M_{ii}\geq 3$ for all $i\in \left\{1,\cdots, k\right\}$ we can always assume that $L_i(T_0)\geq 2$  after possibly replacing $\mathfrak{G}$ by a subset and $T_0$ by a larger admissible tree. To see this, we first observe that we may assume that every element from $\mathfrak{G}$ appears as a leaf of some admissible tree, after removing elements that do not if needed. Under this assumption, we can then replace $T_0$ by a tree which has at least one leaf of every type by making it larger if needed. Finally, since $M_{ii}\geq 3$, we can then perform a leaf expansion of type $i$ if needed to obtain an admissible base tree which has at least two leaves of every type.  
\end{rem}

\begin{lem}
    If $(\mathrm{RP}_G(\mathcal{T},\mathcal{B}),\mathfrak{G}, T_0)$ has the viral expansion property, then every element $(N_1,\dots,N_k)\in \mathbb{N}_0^k$ can be realised as history of some admissible tree $T$. If, moreover, $T'$ is another tree with this property and $f\in \mathrm{RP}_G(\mathcal{T},\mathcal{B})$, then there is an element $f'\in \mathrm{RP}_G(\mathcal{T},\mathcal{B})$ with $[T,f]=[T',f']$. 
\end{lem}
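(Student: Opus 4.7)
The plan is to split the proof into two parts, both of which reduce to the bookkeeping already set up in Section~\ref{sec:expansions-of-trees}.

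For the existence statement, I would construct the desired tree $T$ by an iterated leaf expansion starting from $T_0$. The crucial observation comes from Lemma~\ref{lem:linear-algebra-determines-counts}: a single leaf expansion of type $j$ changes the count of type-$j$ leaves by $M_{jj}-1 \geq 2$ (using $M_{jj}\geq 3$ from the viral expansion property) and the count of type-$i$ leaves for $i\neq j$ by the non-negative quantity $M_{ij}$. Combined with $L_i(T_0)\geq 2$, this guarantees that throughout any sequence of expansions the count of leaves of every type stays at least $2$, so one can always find a leaf of any prescribed type on which to perform the next expansion. The construction then simply does $N_1$ expansions of type $1$, followed by $N_2$ of type $2$, and so on, producing an admissible tree with history $(N_1,\dots,N_k)$.

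For the second statement, let $T$ and $T'$ both have history $(N_1,\dots,N_k)$ and let $f\in \mathrm{RP}_G(\mathcal{T},\mathcal{B})$. By the identities in Lemma~\ref{lem:linear-algebra-determines-counts}, both $L_i(T)$ for each $i$ and the total interior-vertex count $I(T)$ are explicit functions of the history (together with the fixed data $T_0$, $M$, and the $I_j$). Hence $L_i(T)=L_i(T')$ for every $i$ and $I(T)=I(T')$. Lemma~\ref{lem:equivalence-of-pairs} then directly furnishes an element $f'\in \mathrm{RP}_G(\mathcal{T},\mathcal{B})$ with $[T,f]=[T',f']$, completing the proof.

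I do not anticipate a genuine obstacle: the argument is essentially formal once one notices that the viral expansion property is precisely the positivity hypothesis needed to keep the greedy expansion running. If either $M_{ii}\geq 3$ or $L_i(T_0)\geq 2$ failed, the greedy construction in the first part could stall by exhausting all leaves of some type, so the role of the hypotheses is transparent; with them in hand, the remaining work is pure bookkeeping via the two preceding lemmas.
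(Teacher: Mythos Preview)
Your proposal is correct and follows essentially the same approach as the paper: greedily perform the required leaf expansions (the viral expansion property ensuring a leaf of each needed type is always available), then invoke Lemmas~\ref{lem:linear-algebra-determines-counts} and~\ref{lem:equivalence-of-pairs} for the second assertion. The paper's proof is slightly terser and notes that the expansions may in fact be performed in any order, but the substance is identical.
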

\begin{proof}
    Since by assumption $T_0$ has a leaf of every type and every leaf expansion of type $i$ increases the number of leaves of type $i$, we can perform a sequence of $n=\sum_{i=1}^k N_i$ leaf expansions $(v_1,\cdots, v_n)$ consisting of $N_i$ expansions of type $i$ in any order we want. This shows the existence of a tree $T$ with the desired properties. The second assertion is an immediate consequence of Lemmas \ref{lem:linear-algebra-determines-counts} and \ref{lem:equivalence-of-pairs}.
\end{proof}

\section{Bestvina--Brady Morse theory}
\label{sec:BB-Morse-theory}
To prove that $\mathrm{RP}_{\mathcal{T}}(G)$ has good finiteness properties, we will use Brown's criterion and Bestvina--Brady Morse theory on affine cell complexes. We start by briefly introducing affine cell complexes, for details refer to \cite[Section 2]{BesBra-97}. We will follow \cite{GenLonUre-22} and \cite{AraBuxFlePetWu-21} in our exposition.

An \emph{affine cell complex} is a cell complex such that every cell $e$ is identified with a convex polyhedral cell $C_e$ in some $\mathbb{R}^n$ (where $n$ may depend on the cell) via a characteristic map $\chi : C_e \to e$ and the restriction of $\chi$ to any face of $C_e$ is the characteristic function of another cell, up to a precomposition by an affine homeomorphism. One should think of an affine cell complex, as a cell complex obtained by gluing convex polyhedral cells along their faces and for us the most important example will be cube complexes.

A \emph{height function} (or \emph{Morse function}) on an affine cell complex $X$ is a map $h: X\to \mathbb{R}$ such that the restriction $h|_{e}$ to every cell $e$ of $X$ is affine linear and constant only if $\mathrm{dim}(e)=0$, and the image of the $0$-skeleton in $\mathbb{R}$ is discrete. For $a\in \mathbb{R}$, the \emph{sublevel set $X_a$ of $X$ with respect to $h$} is the full subcomplex of $X$ spanned by the vertices $x\in X$ with $h(x)\leq a$. 

The \emph{link ${\rm Lk}_X(v)$ of a vertex $v\in X$} of a locally finite affine cell complex $X$ is the cell complex defined by intersecting $X$ with a sufficiently small sphere around $v$: the vertices of ${\rm Lk}_X(v)$ are in correspondence with the edges of $X$ based at $v$ and a collection of vertices spans a cell if they correspond to the set of edges of a cell of $X$ at a vertex that is identified with $v$ (a more precise definition can be found in \cite{BesBra-97}). 

The \emph{ascending/descending link} ${\rm Lk}^{\uparrow/\downarrow}_X(v)$ of $v$ (with respect to $h$) is the full subcomplex of ${\rm Lk}_X(v)$ spanned by all vertices corresponding to edges along which $h$ is increasing/decreasing. 

To prove that our groups have good finiteness properties, we will use the following well-known criterion, obtained by combining Brown's criterion \cite{Bro-87} with Bestvina--Brady Morse theory \cite{BesBra-97} (see e.g. \cite[Proposition 5.15]{GenLonUre-22}).

\begin{thm}\label{thm:Brown-and-BB}
    Let $G$ be a group and let $X$ be a contractible affine $G$-CW complex. Assume that there is $n\in \mathbb{N}_0$ such that all cell stabilisers of $k$-cells are $F_{n-k}$. Let $h:X\to \mathbb{R}$ be a $G$-invariant height function. Assume that for every $a\in \mathbb{R}$ the sublevel set $X_a$ is $G$-cocompact and that, for every $m\in \left\{0,\cdots, n-1\right\}$, there exists some $r\in \mathbb{R}$ such that the descending link of every vertex $x\in X$ with $h(x)\geq r$ is $m$-connected. Then $G$ is of type $F_n$.     
\end{thm}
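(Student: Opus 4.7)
The plan is to deduce this from the combination of Brown's criterion with the sublevel-set analysis provided by Bestvina--Brady Morse theory. Since the statement is essentially packaging two well-known tools together, the proof will really be a careful alignment of hypotheses rather than new combinatorial work.

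First I would use the height function to organize $X$ as an increasing filtration by the $G$-invariant sublevel sets $X_a$. Since the image $h(X^{(0)})\subset\mathbb{R}$ is discrete, I can replace the continuous parameter $a$ by a cofinal sequence $a_0<a_1<\cdots$ in $h(X^{(0)})$ and work with the filtration $X_{a_0}\subseteq X_{a_1}\subseteq\cdots$ whose union is $X$. By hypothesis each $X_{a_i}$ is $G$-cocompact. Moreover, each $k$-cell of $X_{a_i}$ has stabilizer of type $F_{n-k}$ by assumption, so each $G\backslash X_{a_i}$ has only finitely many orbits of cells and the stabilizer condition is satisfied; this is the setup required by the equivariant form of Brown's criterion \cite{Bro-87}.

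Next I would analyze the passage from $X_{a_{i-1}}$ to $X_{a_i}$ using Bestvina--Brady Morse theory. The affine-linearity of $h$ on each cell, together with the fact that $h$ is constant on a cell only if the cell is a vertex, is precisely the input needed for the local Morse lemma: if $v\in X^{(0)}$ satisfies $h(v)=a_i$, then, up to homotopy, passing from $X_{<a_i}$ to $X_{<a_i}\cup\mathrm{star}(v)$ glues the cone on the descending link $\mathrm{Lk}^{\downarrow}_X(v)$ along that link. Hence $X_{a_i}$ is obtained from $X_{a_{i-1}}$ by iteratively coning off the descending links at the finitely many (mod $G$) new vertices, and the inclusion $X_{a_{i-1}}\hookrightarrow X_{a_i}$ is $m$-connected whenever every new descending link is $(m-1)$-connected. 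This is the standard calculation; see \cite[Section~2]{BesBra-97}.

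The key hypothesis then kicks in: given $m\in\{0,\dots,n-1\}$, there exists $r_m$ so that every vertex $x$ with $h(x)\geq r_m$ has $(m-1)$-connected descending link (after reindexing by one, following the convention of \cite{GenLonUre-22}). Choosing $i_m$ with $a_{i_m}\geq r_m$, I conclude that for all $j\geq i_m$ the inclusion $X_{a_{i_m}}\hookrightarrow X_{a_j}$ is $m$-connected, and by taking direct limits and using the contractibility of $X=\bigcup_j X_{a_j}$, the pair $(X,X_{a_{i_m}})$ is $m$-connected. In particular, $X_{a_{i_m}}$ itself is $(m-1)$-connected and the inclusion into the contractible $X$ is $m$-connected, which is the essentially unique required hypothesis (after shifting indices to match the precise formulation used in \cite[Proposition~5.15]{GenLonUre-22}).

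Finally I would invoke Brown's criterion: a group acting cellularly on a contractible complex with a filtration by cocompact subcomplexes such that, for each $m<n$, the filtration is eventually $m$-connected and all $k$-cell stabilizers are of type $F_{n-k}$, is itself of type $F_n$. Applying this to the filtration $\{X_{a_i}\}$ yields that $G$ is of type $F_n$. The only non-routine step here is checking that the Morse-theoretic coning description truly gives the advertised connectivity of the filtration inclusions; this is where one has to be a little careful with the finitely many vertices at a given critical level and with the distinction between descending link connectivity and inclusion connectivity. All remaining steps are diagrammatic and follow standard references.
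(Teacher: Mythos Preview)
Your sketch is correct and is precisely the standard argument that the paper has in mind: the paper does not prove this theorem at all but simply states it as a well-known criterion obtained by combining Brown's criterion \cite{Bro-87} with Bestvina--Brady Morse theory \cite{BesBra-97}, referring to \cite[Proposition~5.15]{GenLonUre-22} for a formulation. Your filtration by sublevel sets, the Morse-lemma analysis of the inclusions via coning off descending links, and the final appeal to Brown's criterion are exactly the ingredients behind that citation; the only minor wobble is in your index-shift when matching ``$m$-connected descending links'' to ``$(m{+}1)$-connected inclusions,'' but the intended alignment is clear.
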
 

We will apply this to the Stein--Farley complex $\mathfrak{X}$ equipped with the height function $h:\mathfrak{X}\to \mathbb{R}$ defined by mapping a vertex $[T,f]$ to the number of leaves of $T$ and extending affine linearly on cubes. In particular, $h$ only depends on $T$ and it is easy to see that this is indeed a well-defined height function for the $\mathrm{RP}_G(\mathcal{T},\mathcal{B})$-action on $\mathfrak{X}$. Moreover, since for every $a\in \mathbb{N}$ there are only finitely many admissible trees with $\leq a$ leaves, the $\mathrm{RP}_G(\mathcal{T},\mathcal{B})$-action on sublevel sets is cocompact. Since we have also already checked that $\mathfrak{X}$ is contractible to deduce from Theorem \ref{thm:Brown-and-BB} that $\mathrm{RP}_G(\mathcal{T},\mathcal{B})$ is of type $F_n$ for some $n\in \mathbb{N}$ it suffices to check the following two conditions:
\begin{enumerate}
    \item \label{itm:conn-desc-links}For every $m\in \mathbb{N}$ there is an $r\in \mathbb{N}$ such that the descending link of every vertex of height at least $r$ is $m$-connected.
    \item \label{itm:fin-props-of-stabilisers} The stabiliser in $\mathrm{RP}_G(\mathcal{T},\mathcal{B})$ of every cell of $\mathfrak{X}$ is of type $F_n$.
\end{enumerate}
We will do this in Sections \ref{sec:descending-links} and \ref{sec:finiteness-properties-of-cell-stabilisers}.

\section{Analysis of the descending links}
\label{sec:descending-links}

We now assume that we are given a triple $(\mathrm{RP}_G(\mathcal{T},\mathcal{B}),\mathfrak{G},T_0)$ that satisfies the viral expansion property. As before we identify $\mathfrak{G}$ with the set $\left\{1,\cdots, k\right\}$. In particular, there are $k$ caret types. 

In this section we will show that under these assumptions for every $m\in \mathbb{N}$ there is an $r\in \mathbb{N}$ such that the descending link of every vertex of $\mathfrak{X}$ of height at least $r$ is $m$-connected. To this end we will first describe the descending links of $\mathfrak{X}$ in Section \ref{subsec:description-of-descending-links} and then analyse their connectivity properties in Section \ref{subsec:connectivity-descending-links}.

\subsection{Descending links in $\mathfrak{X}$}
\label{subsec:description-of-descending-links}

Recall that the descending link ${\rm Lk}^{\downarrow}(x)$ of $x=[T,f]\in \mathfrak{X}$ is the full subcomplex of the link ${\rm Lk}(x)$ of $x$ spanned by the vertices that correspond to edges of $\mathfrak{X}$ on which $h$ is decreasing with respect to $h(x)$. In particular, for $\ell\geq 0$ the $\ell$-simplices in ${\rm Lk}^{\downarrow}(x)$ are in correspondence with the $(\ell+1)$-cubes $C$ of $\mathfrak{X}$ which have $x$ as a vertex in which $h|_C$ attains its maximum. In this case $C=[y,x]$ for a suitable $y\in \mathfrak{X}$  with $y\prec x$.

\begin{lem}\label{lem:l-simplices-in-descending-link}
    Let $x\in \mathfrak{X}$. The $\ell$-simplices in ${\rm Lk}^{\downarrow}(x)$ are in one-to-one correspondence with the elements $y\in \mathfrak{X}$ which admit a representative of the following form:
    
    There is an admissible tree $T$ and an element $f\in \mathrm{RP}_G(\mathcal{T},\mathcal{B})$ such that $y=[T,f]$, $x=[T',f]$ and $T'$ is obtained from $T$ by an elementary expansion that consists of precisely $(\ell+1)$ leaf expansions of $T$. Moreover, the vertices of the $\ell$-simplex are in correspondence with the elements of the collection of carets we need to attach to $T$ to obtain $T'$.
\end{lem}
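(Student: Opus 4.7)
The plan is to translate the question about $\ell$-simplices in ${\rm Lk}^{\downarrow}(x)$ into a question about $(\ell+1)$-cubes of $\mathfrak{X}$ having $x$ as their top vertex, and then to read off what such a cube looks like in tree-theoretic terms. The starting point is the standard correspondence in any cube complex: $\ell$-simplices of ${\rm Lk}(x)$ are in bijection with $(\ell+1)$-cubes of $\mathfrak{X}$ containing $x$ as a vertex, and such an $\ell$-simplex lies in ${\rm Lk}^{\downarrow}(x)$ iff $h|_C$ attains its maximum at $x$. By the cube complex structure established in Section \ref{sec:Stein-Farley-complex}, every cube is of the form $[u,v]$ with $u\preccurlyeq v$, and by Lemma \ref{lem:boolean} its dimension equals the number of leaf expansions required to obtain a representative of $v$ from one of $u$ with common $f$.

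Next I would show that $h$ is strictly increasing along $<$ on every cube $[u,v]$. If $w \prec w'$ both lie in $[u,v]$, Lemma \ref{lem:uniqueness-for-equal-maps} produces unique representatives $[T_w,f]$ and $[T_{w'},f]$ with $T_u \subset T_w \subsetneq T_{w'} \subset T_v$. By Lemma \ref{lem:linear-algebra-determines-counts}, each leaf expansion of type $i$ changes the total leaf count by $\sum_j M_{ji} - 1$, which is strictly positive under the standing viral expansion property (in particular $M_{ii} \geq 3$). Hence $h(w) < h(w')$, so the unique vertex of $[u,v]$ maximizing $h$ is the top vertex $v$. Consequently the cubes contributing $\ell$-simplices to ${\rm Lk}^{\downarrow}(x)$ are exactly those of the form $[y,x]$ with $y \preccurlyeq x$ and cube-dimension $\ell+1$.

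Combining these two observations yields the required bijection: an $\ell$-simplex of ${\rm Lk}^{\downarrow}(x)$ corresponds to an element $y \in \mathfrak{X}$ with $y \preccurlyeq x$ such that $[y,x]$ has cube-dimension $\ell+1$, which by the definition of $\preccurlyeq$ is equivalent to the existence of common representatives $y = [T,f]$, $x = [T',f]$ with $T'$ an elementary expansion of $T$ consisting of precisely $\ell+1$ leaf expansions. For the moreover statement, the vertices of the $\ell$-simplex correspond to the edges of the cube $[y,x]$ incident to $x$; in the Boolean lattice $[y,x]$ these are precisely the atoms (i.e., the one-step downward moves from the top), which by Lemma \ref{lem:boolean} are in bijection with the individual carets used in going from $T$ to $T'$. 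I do not expect any serious obstacle; the only subtlety is the strict monotonicity of $h$ on cubes, and this comes cheaply from the viral expansion property.
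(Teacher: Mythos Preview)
Your argument is correct and follows essentially the same route as the paper: both identify $\ell$-simplices in ${\rm Lk}^{\downarrow}(x)$ with $(\ell+1)$-cubes $[y,x]$ having $x$ as top vertex, then read off the tree-theoretic description from the definition of $\preccurlyeq$ and the Boolean structure of intervals. Two minor remarks: your explicit appeal to the viral expansion property to force strict monotonicity of $h$ is more careful than the paper's ``it is clear'' (and is indeed available, since Section~\ref{sec:descending-links} assumes it throughout); and in the moreover clause you mean \emph{coatoms} rather than atoms, though your parenthetical makes the intended meaning clear.
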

\begin{proof}
    Let $\rho$ be an $\ell$-simplex of ${\rm Lk}^{\downarrow}(x)$ and let $C=[y,x]$ be the $(\ell+1)$-cube of $\mathfrak{X}$ representing $\rho$. Then $y\preccurlyeq x$, meaning that there are representatives $y=[T,f]$ and $x=[T',f]$ such that $T'$ is obtained from $T$ by expanding finitely many of its leaves, or, equivalently, $\overline{T'\setminus {\rm int}(T)}$ consists of a collection of finitely many disjoint carets. The condition that $[y,x]$ is $(\ell+1)$-dimensional implies that the number of disjoint carets is $\ell+1$ (note that by Lemma \ref{lem:basic-properties-of-expansions}(2) this number does not depend on the choice of representatives). 
    
    Conversely, it is clear that for any $y=[T,f]$ such that $x=[T',f]$, where $T'$ is obtained from $T$ by an elementary expansion that consists of precisely $(\ell+1)$ leaf expansions of $T$, the interval $[y,x]$ defines an $(\ell+1)$-cube of $\mathfrak{X}$ such that $h|_{[y,x]}$ attains its maximum in $x$ and thus an $\ell$-simplex of ${\rm Lk}^{\downarrow}(x)$.

    Finally, the moreover-part is an immediate consequence of this description of the $\ell$-simplices.
\end{proof}
As a consequence we see that the descending link of $x$ is a subcomplex of a certain matching complex on the vertex set of ${\rm Lk}^{\downarrow}(x)$ in the following sense.
\begin{cor}\label{cor:matching-complex}
    Let $x\in \mathfrak{X}$ and let $\rho$ be an $\ell$-simplex of ${\rm Lk}^{\downarrow}(x)$. Then $\rho$ determines a set of $\ell+1$ pairwise disjoint subsets of the set of leaves of $x$, each of which is in bijection with a set of terminal leaves of a caret.
\end{cor}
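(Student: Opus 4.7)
The plan is to read off the statement directly from Lemma~\ref{lem:l-simplices-in-descending-link}. That lemma already guarantees that an $\ell$-simplex $\rho$ in $\mathrm{Lk}^{\downarrow}(x)$ can be represented by a pair of trees $T\subset T'$ with $x=[T',f]$ and $y=[T,f]$, where $T'$ is an elementary expansion of $T$ by exactly $\ell+1$ leaf expansions; furthermore the vertices of $\rho$ correspond bijectively to the $\ell+1$ carets attached. First I would unpack this representative: let $v_0,\ldots,v_\ell$ be the (pairwise distinct) leaves of $T$ at which the leaf expansions are performed, with $v_i$ of some type $\nu_i\in\mathfrak{G}$, and let $S_{\nu_i}$ denote the caret attached at $v_i$ as in Definition~\ref{defn:carets}.

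For each $i$, the attachment of $S_{\nu_i}$ at $v_i$ turns $v_i$ into an interior vertex of $T'$ and introduces the terminal leaves of $S_{\nu_i}$ as new leaves of $T'$. Let $L_i\subset V(T')$ denote the set of these newly produced leaves. By construction the identification of $S_{\nu_i}\setminus\{w_{\nu_i}\}$ with its image in $T'$ provides a bijection of $L_i$ with the set of terminal leaves of the caret $S_{\nu_i}$, which gives the required bijection.

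Next I would check disjointness. Since $v_0,\ldots,v_\ell$ are pairwise distinct leaves of $T$ and Lemma~\ref{lem:unique-carets} attaches $S_{\nu_i}$ entirely inside a single piece of $\mathcal{T}\setminus\mathrm{int}(T)$ at $v_i$, distinct carets occupy disjoint subtrees of $T'$. Consequently the subsets $L_0,\ldots,L_\ell$ of leaves of $T'$ are pairwise disjoint.

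Finally I would verify that this decomposition descends to the well-defined notion of \emph{set of leaves of $x$} introduced after Lemma~\ref{lem:equivalence-of-pairs}. If $(T',f)\sim(S',g)$ is a different representative of $x$, then $g^{-1}\circ f$ restricts to a type-preserving rigid bijection between $\mathcal{T}\setminus\mathrm{int}(T')$ and $\mathcal{T}\setminus\mathrm{int}(S')$; by Lemma~\ref{lem:basic-properties-of-expansions}(1)--(2) this transports the expansion $T\subset T'$ to an equivalent expansion of the same pattern, carrying the subsets $L_i$ to the analogous subsets for the new representative. Hence the family $\{L_0,\ldots,L_\ell\}$, viewed inside the set of leaves of $x$, depends only on $\rho$ and not on the chosen representative. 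There is no real obstacle here; the only care required is the bookkeeping of the equivalence relation $\sim$, which we already controlled in Lemma~\ref{lem:basic-properties-of-expansions}.
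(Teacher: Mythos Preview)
Your proposal is correct and follows the same approach as the paper, which simply records the corollary as ``an immediate consequence of Lemma~\ref{lem:l-simplices-in-descending-link} and the definition of the set of leaves of a vertex of $\mathfrak{X}$.'' You have spelled out explicitly the disjointness of the $L_i$ and the independence of the representative---details the paper leaves implicit---but the underlying argument is identical.
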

\begin{proof}
    This is an immediate consequence of Lemma \ref{lem:l-simplices-in-descending-link} and the definition of the set of leaves of a vertex of $\mathfrak{X}$.
\end{proof}

\subsection{Connectivity of descending links} \label{subsec:connectivity-descending-links}
We will now use the description of the descending links from Section \ref{subsec:description-of-descending-links} to show that they satisfy the desired connectivity properties. We recall that an (admissible) tree $S$ can be \emph{rearranged} to an (admissible) tree $T$ if $I(S)=I(T)$ and $L_i(S)=L_i(T)$ for $1\leq i \leq k$. We will further say for a finite collection $\rho$ of carets (where we allow several carets to have the same type) that an admissible tree $T$ \emph{is an elementary $\rho$-expansion}, if there is an admissible subtree $T'$ such that $T$ is obtained from $T'$ by an elementary expansion that consists of attaching the carets in $\rho$ to leaves of $T'$.

\begin{lem}\label{lem:maximum-in-alternative-histories}
Let $\rho$ be a finite collection of carets and let $i\in \left\{1,\cdots, k\right\}$. Then there is a constant $\alpha(\rho,i)\in\mathbb{N}_0$ with the following property:

If a tree with history $(N_1,\cdots, N_k)$ can be rearranged to a tree which is an elementary $\rho$-expansion, then there is a subhistory $(M_1,\cdots,M_k)\leq (N_1,\cdots, N_k)$ with $M_i\leq \alpha(\rho,i)$ such that any tree with this subhistory can be rearranged to a tree which is an elementary $\rho$-expansion.
\end{lem}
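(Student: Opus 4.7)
My plan is to translate the condition of being rearrangeable to an elementary $\rho$-expansion into a system of linear inequalities in the history and then greedily reduce the $i$-th coordinate.

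First I would combine Lemmas~\ref{lem:equivalence-of-pairs} and~\ref{lem:linear-algebra-determines-counts}, which together show that the rearrangement class of an admissible tree is determined by its history and that the leaf counts depend linearly on the history. Writing $n_j$ for the number of type-$j$ carets in $\rho$, this translates the rearrangeability condition into: $N_j\ge n_j$ for all $j$ and, for every $j$,
\[
L_j(T_0) + \sum_{l=1}^{k} (M_{jl}-\delta_{jl})(N_l-n_l)\ \ge\ n_j,
\]
with $\delta_{jl}$ the Kronecker delta and $M_{jl}$ the entries of the caret matrix; this last inequality expresses that some tree with history $(N_1-n_1,\ldots,N_k-n_k)$ has enough type-$j$ admissible leaves to receive the $n_j$ type-$j$ carets of $\rho$.

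Next I would set $M_l:=N_l$ for every $l\neq i$ and take $M_i$ to be the smallest integer in $[n_i,N_i]$ for which the above inequalities (with $N$ replaced by $M$) still hold. Either $M_i=n_i$, in which case $M_i$ is bounded by $n_i$, or a further unit decrease of $M_i$ would violate the inequality at some index $j$. In the second case, since decreasing $M_i$ by one changes the LHS of the $j$-th inequality by $-(M_{ji}-\delta_{ji})$, we must have $M_{ji}>\delta_{ji}$; for $j=i$ this is allowed because the viral expansion property guarantees $M_{ii}\ge 3$. Combining the validity of the inequality at $M_i$ with its failure at $M_i-1$, and discarding the non-negative contribution from the terms involving $N_l-n_l$ with $l\neq i$, I extract
\[
M_i \ \le\ n_i + \left\lceil \frac{n_j-L_j(T_0)}{M_{ji}-\delta_{ji}} \right\rceil.
\]

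Taking $\alpha(\rho,i)$ to be the maximum of $n_i$ and these bounds, as $j$ ranges over the finitely many indices with $M_{ji}>\delta_{ji}$, yields a constant depending only on $\rho$, $i$, and the fixed data $T_0$ and $(M_{jl})$, as required. The main subtlety will be verifying that leaving $M_l=N_l$ unchanged for $l\neq i$ is legitimate: inequalities at indices $j$ with $M_{ji}=\delta_{ji}$ do not involve $M_i$ and so remain automatically satisfied, while the remaining ones are exactly those entering the bound above.
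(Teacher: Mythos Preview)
Your approach differs from the paper's: you aim for an explicit linear-inequality bound, whereas the paper argues non-constructively by contradiction and Dickson's Lemma. There is, however, a gap in your claimed equivalence. Rearrangeability of a tree with history $N$ to an elementary $\rho$-expansion $T'$ only says that $T'$ shares the invariants $(I,L_1,\dots,L_k)$ with $T$; it does not force $T'$ itself to have history $N$. Lemma~\ref{lem:linear-algebra-determines-counts} makes these invariants an affine function of the history, but nothing in the viral expansion hypothesis forces that affine map to be injective. When it is not, the tree $T''$ underlying the $\rho$-expansion need not have history $N-n$, and $N_j\ge n_j$ can fail. Concretely, if two caret types satisfied $M=\bigl(\begin{smallmatrix}3&2\\2&3\end{smallmatrix}\bigr)$ and $I_1=I_2$ with $L_1(T_0)=L_2(T_0)=2$, the histories $(1,0)$ and $(0,1)$ would lie in the same rearrangement class; with $\rho$ a single type-$1$ caret, history $(0,1)$ is then $\rho$-rearrangeable even though $N_1=0<1=n_1$. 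Your leaf inequality is genuinely necessary, but the conjunction with $N\ge n$ is only sufficient, not necessary.

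This gap breaks two later steps. First, ``discarding the non-negative contribution from the terms involving $N_l-n_l$ with $l\ne i$'' uses $N_l\ge n_l$, which is exactly what is unproven; without it the bound you extract for $M_i$ depends on $N$ and no uniform $\alpha(\rho,i)$ emerges. Second, even once the leaf inequalities hold at your chosen $(M_1,\dots,M_k)$, you cannot conclude $\rho$-rearrangeability there without also knowing $M_l\ge n_l$ for all $l$. The paper's argument sidesteps this entirely: it never characterises rearrangeability, but assumes a sequence of minimal counterexamples with unbounded $i$-th coordinate and obtains an infinite antichain in $\mathbb{N}_0^{k-1}$, contradicting Dickson's Lemma.
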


\begin{proof}
 The proof is by contradiction. Assume there is a collection $\rho$ of carets  and an $i\in \left\{1,\cdots,k\right\}$ such that for all $\alpha>0$ there is a tree $T$ with history $(N_1,\cdots,N_k)$ satisfying the following properties:
 \begin{enumerate}
     \item $T$ can be rearranged to a tree which is an elementary $\rho$-expansion;
     \item $N_i\geq \alpha$;
     \item no tree corresponding to a proper subhistory $(M_1,\cdots, M_k)$ of $(N_1,\cdots, N_k)$ with $M_i<N_i$ can be rearranged to a tree which is an elementary $\rho$-expansion.
 \end{enumerate}

 Without loss of generality we may assume that $i=k$. Observe that our assumptions then imply that there is a sequence of admissible trees $T_j$ with histories $(N_{1,j},\cdots, N_{k,j})$ such that
 \begin{enumerate}
   \item
     the sequence $(N_{k,j})_j$ is strictly increasing,
   \item
     each tree $T_j$ can be rearranged to a tree which is an elementary $\rho$-expansion and
   \item
     there is no proper subhistory $(M_{1,j},\cdots, M_{k,j})<(N_{1,j},\cdots, N_{k,j})$ with $M_{k,j}<N_{k,j}$ for which a corresponding tree can be rearranged to a tree that is an elementary $\rho$-expansion.
 \end{enumerate}
 Now consider two indices $j_1 < j_2$. If we had $(N_{1,j_1},\cdots, N_{k-1,j_1})\leq (N_{1,j_2},\cdots, N_{k-1,j_2})$ coordinatewise, then $(N_{1,j_1},\cdots, N_{k,j_1})\leq (N_{1,j_2},\cdots, N_{k,j_2})$ is a proper subhistory with $N_{k,j_1}<N_{k,j_2}$ as $(N_{k,j})_j$ is strictly increasing. This contradicts the third item as $T_{j_1}$ can be rearranged to a tree that is an elementary $\rho$-expansion.

 Consequently, for every $j\geq 1$ the positive orthant described by elements of $\mathbb{N}_0^{k-1}$ with entries $\geq (N_{1,j},\cdots, N_{k-1,j})$ is not contained in the union of all orthants of previous sequence elements. However, this contradicts Dickson's Lemma, which says that such a sequence of orthants can not be infinite, see e.g. \cite{FigFigSchSch-11}.
\end{proof}

We will now show:
\begin{prop}\label{prop:connectivity-of-descending-links}
    For every $m\in \mathbb{N}$ there is a $r\in \mathbb{N}$ such that the descending link of every vertex $x=[T,f]\in \mathfrak{X}$ with $h(x)\geq r$ is $m$-connected.
\end{prop}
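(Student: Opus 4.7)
The strategy is to apply Zaremsky's connectivity lemma (Lemma~\ref{lem:zaremsky-connectivity-lemma}) to $\mathrm{Lk}^{\downarrow}(x)$. By Lemma~\ref{lem:l-simplices-in-descending-link} and Corollary~\ref{cor:matching-complex}, the descending link has the flavour of a matching complex: each vertex corresponds to a subset of the leaves of $x$ matching the terminal-leaf multiset of some caret $S_i$, and a collection of vertices spans a simplex precisely when the corresponding subsets can be simultaneously realised as the leaves of disjoint sub-carets in some tree representative of $x$. Fixing a caret type $\nu$ for which the viral expansion property guarantees $M_{\nu\nu} \geq 3$, I plan to build the required pseudosimplex $\sigma$ from vertices corresponding to type-$\nu$ carets; each such caret uses $M_{\nu\nu}$ leaves of type $\nu$ together with the appropriate number of leaves of other types.

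Concretely, I use Lemma~\ref{lem:maximum-in-alternative-histories} together with Lemma~\ref{lem:linear-algebra-determines-counts} to arrange that when $h(x)$ is sufficiently large, a representative $[T,f]=x$ can be chosen so that $T$ contains many pairwise disjoint type-$\nu$ sub-carets; this yields an honest simplex $\sigma$ of large dimension $l$ in $\mathrm{Lk}^{\downarrow}(x)$, in particular an $m$-pseudosimplex for every $m$. By Lemma~\ref{lem:equivalence-of-pairs}, any two trees with the same history can be rearranged into one another within the equivalence class defining $x$, so the $m$-flag condition for $\sigma$ reduces to a combinatorial statement: a collection of caret matchings that is pairwise disjoint at the level of leaf subsets can be jointly realised as a family of disjoint sub-carets in some rearrangement of a representative of $x$.

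For the joinability hypothesis of Zaremsky's lemma, I observe that any single caret uses at most $K := \max_i \sum_j M_{ji}$ terminal leaves, while the type-$\nu$ carets comprising $\sigma$ have pairwise disjoint leaf subsets by construction. Hence any vertex $v$ of $\mathrm{Lk}^{\downarrow}(x)$ conflicts at the leaf level with at most $K$ elements of $\sigma$; discarding these yields a pseudoface of $\sigma$ of codimension at most $K$ whose union with $v$ is still a pairwise disjoint family of caret matchings, hence, by the rearrangement above, spans a simplex in $\mathrm{Lk}^{\downarrow}(x)$. Applying Lemma~\ref{lem:zaremsky-connectivity-lemma} with codimension parameter $K$ yields connectivity $\min\{\lfloor l/K\rfloor - 1, m-1\}$, and choosing $r$ large enough so that $l \geq (m+1)K$ (possible by the previous paragraph) gives the desired $m$-connectivity.

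I expect the main obstacle to be justifying the $m$-flag condition: turning pairwise leaf-disjointness into a simultaneous realisation as disjoint sub-carets in a single tree requires careful bookkeeping of how leaves of the various types are consumed across carets of different types, applying Lemma~\ref{lem:equivalence-of-pairs} in a controlled way. A secondary technical task is producing, from a vertex $x$ of sufficiently large height, a representative with enough disjoint type-$\nu$ sub-carets for $\sigma$ to attain any prescribed dimension $l$; this should follow from iterating rearrangements that replace nested type-$\nu$ expansions by independent ones, using both $L_\nu(T_0)\geq 2$ and $M_{\nu\nu}\geq 3$ from the viral expansion property.
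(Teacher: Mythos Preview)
Your overall architecture matches the paper's: build a large simplex $\sigma$ of carets of a single type, verify the hypotheses of Lemma~\ref{lem:zaremsky-connectivity-lemma} with codimension parameter equal to the maximal number of terminal leaves of a caret. However, there is a genuine gap in how you propose to verify the $m$-flag condition, and relatedly you have misplaced the role of Lemma~\ref{lem:maximum-in-alternative-histories}.

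You reduce $m$-flagness to the assertion that a family of caret-shaped leaf subsets which are pairwise disjoint can always be simultaneously realised as terminal carets in some rearrangement of $T$. This is not something you can hope to prove by ``bookkeeping'' with Lemma~\ref{lem:equivalence-of-pairs} alone: that lemma lets you rearrange a tree with a given history arbitrarily, but it does not tell you that a given pairwise-compatible family of caret reductions is jointly compatible with \emph{any} history. The paper does not attempt this. Instead, Lemma~\ref{lem:maximum-in-alternative-histories} is the engine for the flag condition (and for the ground condition), not for building $\sigma$. Given a simplex $\rho$ of dimension at most $m+1$ in $\mathrm{Lk}^{\downarrow}(x)$, that lemma produces a uniform bound $\alpha$ (depending only on $m$) on the number of type-$\nu$ expansions needed in some subhistory realising $\rho$. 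One then chooses, for each $x$, the type $\nu$ by pigeonhole so that $N_\nu$ is large (so $\nu$ cannot be fixed in advance as you suggest), builds $\sigma$ explicitly as a binary pattern of type-$\nu$ carets using $C(m)$ of the available type-$\nu$ expansions, and for the flag condition constructs by hand a single tree $R$ containing both the $\rho$-carets (using at most $\alpha$ type-$\nu$ expansions) and the $\sigma'$-carets (using the remaining type-$\nu$ budget). The disjointness of the leaf sets of $\rho$ and $\sigma'$ is used only at the very end, to choose the identification of the leaves of $R$ with those of $x$. Two minor points: you will need $(m+1)$-flagness rather than $m$-flagness to get $m$-connectedness out of Lemma~\ref{lem:zaremsky-connectivity-lemma}; and building $\sigma$ itself needs only pigeonhole plus the viral expansion property, not Lemma~\ref{lem:maximum-in-alternative-histories}.
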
 
For this we define
\[
    \alpha:= \max\left\{\alpha(\rho,i)\mid \rho \mbox{ is a collection of } \leq m+2 \mbox{ carets and } 1\leq i \leq k\right\}
\]
(here one should think of $\rho$ as a collection of carets that can form a simplex of dimension $\leq m+1$ in the descending link).  Since for every $A\in \mathbb{N}$ there is a height $r$ such that for all trees at height $\geq r$ we require at least $A$ leaf expansions to obtain them from $T_0$, Proposition \ref{prop:connectivity-of-descending-links} is an immediate consequence of the following result.

\begin{lem}\label{lem:connectivity-of-descending-links}
    For every $m\in \mathbb{N}$ there is a constant $C(m)>0$ such that if $T$ is a tree with history $(N_1,\cdots, N_k)$ consisting of at least $k\cdot (\alpha+C(m))$ leaf expansions, then the descending link of $x=[T,f]$ is $m$-connected for any $f\in {\rm RP}_G(\mathcal{T},\mathcal{B})$. 
\end{lem}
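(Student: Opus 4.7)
The strategy is to apply the Zaremsky-style connectivity lemma (Lemma \ref{lem:zaremsky-connectivity-lemma}) to the descending link $\Delta := \mathrm{Lk}^{\downarrow}(x)$ with Zaremsky-parameter $m':=m+1$. This requires producing an $(m+1)$-pseudosimplex $\sigma$ of some dimension $l$ in $\Delta$, verifying that $\Delta$ is $(m+1)$-flag with respect to $\sigma$, and showing that every vertex of $\Delta$ is $(m+1)$-joinable to some $(l-k')$-pseudoface of $\sigma$ for a constant $k'$ depending only on $\alpha$ and $m$. The constant $C(m)$ will be chosen so that $l \geq (m+1)k'$, giving $\min(\lfloor l/k'\rfloor - 1, m)=m$-connectivity.

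First I would construct $\sigma$. Since the total history has at least $k(\alpha+C(m))$ entries, pigeonhole yields some coordinate $i_0$ with $N_{i_0} \geq \alpha + C(m)$. Using the viral expansion property ($L_{i_0}(T_0)\geq 2$, $M_{i_0 i_0}\geq 3$) and the rearrangement freedom of Lemma \ref{lem:equivalence-of-pairs} and Lemma \ref{lem:linear-algebra-determines-counts}, one rearranges $T$ into a form in which a large number of carets of type $i_0$ are top-level, i.e.\ attached at type-$i_0$ leaves of some admissible subtree $T''\subset T$ with all of their terminal leaves still leaves of $T$. Since $M_{i_0 i_0}\geq 3$, each such caret introduces enough new type-$i_0$ leaves to accommodate the next top-level caret, producing inductively at least $l+1 \geq C(m)-O(1)$ pairwise disjoint top-level carets of type $i_0$. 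These carets correspond to vertices of $\Delta$, and because any sub-collection can be collapsed simultaneously, they span an actual simplex $\sigma$ in $\Delta$, which is \emph{a fortiori} an $(m+1)$-pseudosimplex.

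For the $(m+1)$-flag condition, I would argue that if $\rho$ is an $(m+1)$-pseudosimplex in $\Delta$ and $\tau\subseteq\sigma$ a pseudoface vertex-wise joinable to $\rho$, then any $(m+2)$-subset of $\rho\cup\tau$ decomposes as $\rho_0\cup\tau_0$, where $\rho_0$ spans a simplex (since $\rho$ is an $(m+1)$-pseudosimplex) and $\tau_0\subseteq\sigma$ spans a simplex (since $\sigma$ is a full simplex). Vertex-wise joinability then lets one simultaneously realize the carets in $\rho_0\cup\tau_0$ as pairwise disjoint in a common rearrangement, using that the carets of $\tau_0$ are top-level and hence can be moved around freely among the many available type-$i_0$ leaves. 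For the joinability condition, given any vertex $v\in\Delta$, Lemma \ref{lem:maximum-in-alternative-histories} applied to the collection $\rho_v$ consisting of the caret of $v$ together with an arbitrary $(m+1)$-subset of $\sigma$ (total size $\leq m+2$) shows that only $\alpha(\rho_v,i_0)\leq \alpha$ type-$i_0$ carets of $\sigma$ are required to realize $\rho_v$ as an elementary expansion. Hence by discarding at most $\alpha(m+2)$ vertices from $\sigma$ we obtain a pseudoface of dimension $\geq l-\alpha(m+2)$ that is $(m+1)$-joinable to $v$; this determines $k' := \alpha(m+2)$ and hence $C(m) := (m+1)k'+O(1)$.

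The main obstacle I expect is Step 1, namely realizing $\sigma$ as a \emph{full} simplex of large dimension in $\Delta$: this is a genuinely combinatorial rearrangement argument, requiring an induction using the viral expansion hypothesis to stack carets of type $i_0$ at the top of a rearrangement of $T$. A second, related subtlety is in Step 2, since simplices of $\Delta$ correspond not merely to pairwise disjoint caret-leaf sets in the matching-complex sense of Corollary \ref{cor:matching-complex}, but to collections that are \emph{simultaneously realizable} as disjoint top-level carets in a single common rearrangement; bridging vertex-wise joinability to this stronger realizability statement is where the top-level structure of $\sigma$ and the flexibility offered by Lemma \ref{lem:maximum-in-alternative-histories} play the essential role.
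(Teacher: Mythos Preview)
Your overall strategy matches the paper's: produce a large simplex $\sigma$ of top-level type-$i_0$ carets via pigeonhole and the viral expansion property, then feed it into Lemma~\ref{lem:zaremsky-connectivity-lemma}. The construction of $\sigma$ is essentially correct. But the two verification steps have their roles swapped, and one of them is genuinely wrong as written.

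\textbf{The joinability step.} Your use of Lemma~\ref{lem:maximum-in-alternative-histories} here is a non-sequitur. That lemma bounds a \emph{coordinate of a subhistory} realising a given caret collection; it says nothing about how many vertices of $\sigma$ must be discarded, so the inference ``discard at most $\alpha(m+2)$ vertices from $\sigma$'' does not follow. Moreover, applying it to $\rho_v=\{v\}\cup S$ for one $(m+1)$-subset $S\subseteq\sigma$ would at best show that \emph{that particular} $S$ is joinable to $v$, whereas $(m+1)$-joinability to a large pseudoface requires every $(m+1)$-subset of that face to work. The right constant here is not $\alpha$ but $\beta:=\max_i\,|\text{terminal leaves of }S_i|$: the caret of a vertex $v$ has at most $\beta$ leaves, and since the carets of $\sigma$ have pairwise disjoint leaf sets, at most $\beta$ of them can share a leaf with $v$. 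Hence $v$ is vertex-wise joinable to a face of $\sigma$ missing at most $\beta$ vertices, so the Zaremsky parameter is $k'=\beta$ and $C(m)$ is chosen so that $\lfloor C(m)/(2\beta)\rfloor-1\geq m$.

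\textbf{The $(m+1)$-flag step.} This is where Lemma~\ref{lem:maximum-in-alternative-histories} is actually needed, and your sketch (``the carets of $\tau_0$ are top-level and hence can be moved around freely'') does not explain why $\rho_0$ and $\tau_0$ live in a \emph{single} rearrangement. The mechanism is: apply the lemma to $\rho$ alone (a collection of at most $m+2$ carets) to extract a subhistory $(M_1,\ldots,M_k)$ with $M_{i_0}\leq\alpha$ that still realises $\rho$. Since $N_{i_0}\geq\alpha+C(m)$, one can first spend $C(m)$ type-$i_0$ expansions building the binary tree $T_\sigma$ carrying $\sigma'$, then realise $\rho$ using only $M_{i_0}\leq\alpha$ further type-$i_0$ expansions performed \emph{away from} $T_\sigma\setminus T_1$, and finally fill in the remaining history. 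This yields one rearrangement of $T$ that is an elementary $(\rho\sqcup\sigma')$-expansion, which is the flag condition. The same construction, applied with $\rho=\{v\}$ after first deleting the $\leq\beta$ carets of $\sigma$ meeting $v$, also upgrades the vertex-wise joinability above to actual joinability.
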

\begin{proof}
    Let $\beta$ be the maximal number of terminal leaves of a caret. Let $C(m)>0$ be a power of $2$ such that $\left\lfloor \frac{C(m)}{2\beta}\right\rfloor-1\geq m$.
    
    We will show that there is a simplex $\sigma$ in ${\rm Lk}^{\downarrow}(x)$ which is $(m+1)$-flag for ${\rm Lk}^{\downarrow}(x)$ and to which we can apply Lemma \ref{lem:zaremsky-connectivity-lemma} to obtain the desired connectivity properties.

    By the pigeon hole principle and our assumptions there is an $i\in \left\{1,\dots, k\right\}$ such that $N_i\geq \alpha + C(m)$, say, $i=1$. 
    
    This means that there is a tree $T'$ with the same history as $T$ which is obtained from $T_0$ by the following sequence of leaf expansions. The construction of $T'$ is illustrated in Figure \ref{fig:tree-T-prime}, where the tree $T_0$ is drawn in black. 
    \begin{enumerate}
        \item Fix one of the $L_i(T_0)\geq 2$ leaves of $T_0$ and perform one leaf expansion of type $1$. This produces a tree $T_1$ with $L_1(T_1)\geq L_1(T_0)+1$ and $L_i(T_1)\geq L_i(T_0)$ for $2\leq i \leq k$ by the viral expansion property. The expansion performed in the construction of $T_1$ from $T_0$ is indicated in red in Figure \ref{fig:tree-T-prime}.
        \item  Fix a type $1$ leaf of $T_1\setminus T_0$ and iteratively perform $C(m)-1$ leaf expansions of type $1$ starting from an expansion in this leaf to produce a tree $T_{\sigma}$ which is an elementary expansion of a subtree in $\frac{C(m)}{2}$ carets of type $1$. The viral expansion property guarantees that this is possible, e.g. by performing the leaf expansions in a binary tree pattern. The expansions that produce $T_{\sigma}$ from $T_1$ are indicated in purple in Figure \ref{fig:tree-T-prime}.
        \item Perform a sequence of leaf expansions that produce a tree $T'$ from $T_{\sigma}$ with history $(N_1,\dots, N_k)$ in such a way that none of the leaves of $T_{\sigma}\setminus T_1$ are expanded. This is possible due to the viral expansion property and our assumption that $N_1\geq \alpha + C(m)\geq C(m)$. The expansions that produce $T'$ from $T_{\sigma}$ are indicated in orange in Figure \ref{fig:tree-T-prime}.
    \end{enumerate}

    \begin{figure}
        \centering
        \includegraphics[width=.7\linewidth]{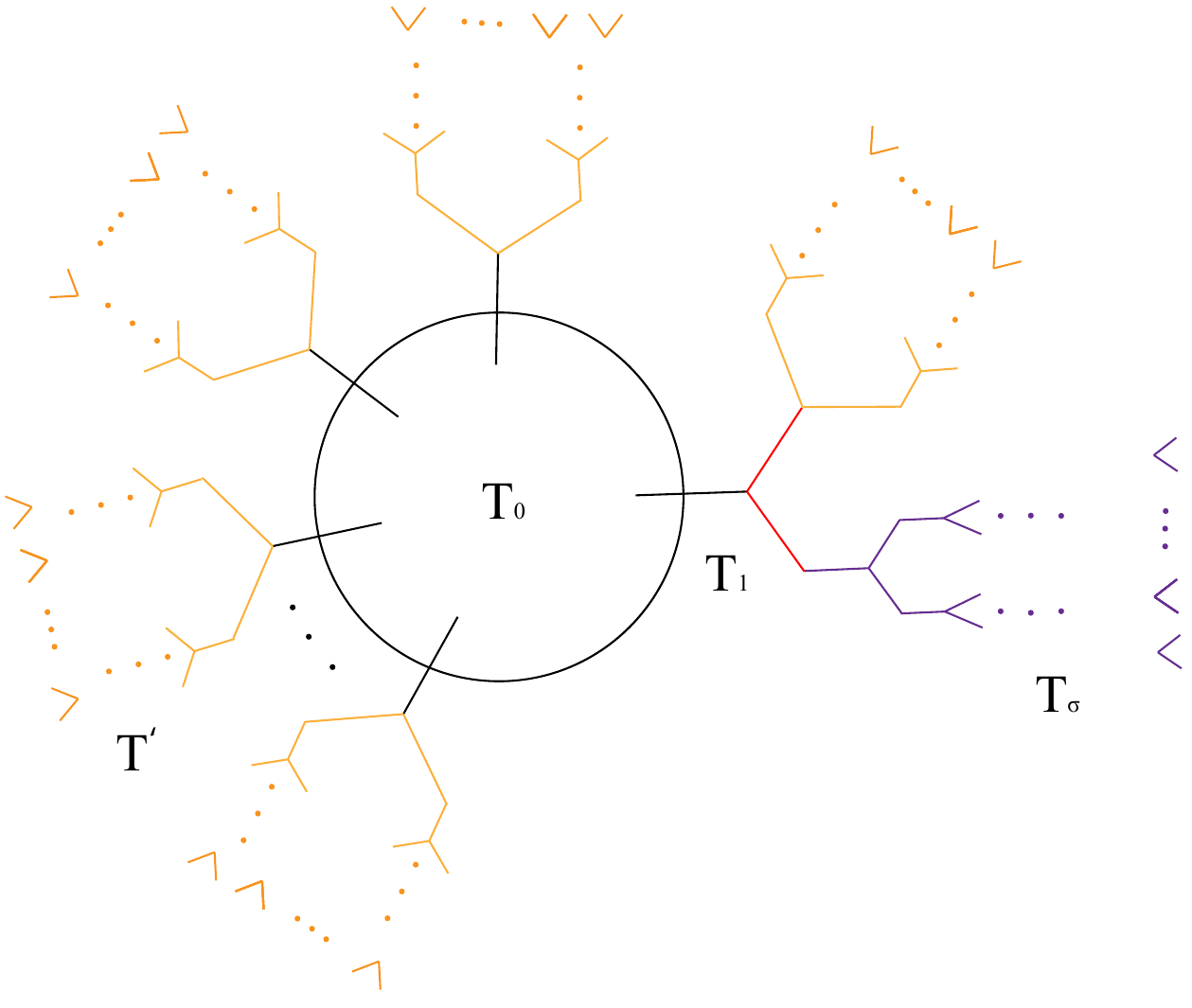}
        \caption{Construction of the tree $T'$ in the proof of Lemma \ref{lem:connectivity-of-descending-links} with Steps (1), (2) and (3) marked in red, purple and orange.}
        \label{fig:tree-T-prime}
    \end{figure}

    By construction there is a collection $\sigma$ of $\frac{C(m)}{2}$ carets of type $1$ such that $T'$ is an elementary expansion of type $\sigma$. Since by Lemma \ref{lem:equivalence-of-pairs} there is some $f'\in {\rm RP}_G(\mathcal{T},\mathcal{B})$ with $[T,f]=[T',f']$, it follows that $\sigma$ defines a $\frac{C(m)}{2}$-simplex in ${\rm Lk}^{\downarrow}(x)$. 
    
    To complete the proof, we will show that we can apply Lemma \ref{lem:zaremsky-connectivity-lemma} to $\sigma$ to deduce that ${\rm Lk}^{\downarrow}(x)$ is $m$-connected. For this it suffices to verify the following two properties:
    \begin{itemize}
        \item[(i)] $\sigma$ is a \emph{$\beta$-ground} for ${\rm Lk}^{\downarrow}(x)$, that is, every vertex of ${\rm Lk}^{\downarrow}(x)$ is connected to all but at most $\beta$ vertices in $\sigma$;
        \item[(ii)] if $\rho$ is a simplex in ${\rm Lk}^{\downarrow}(x)$ of dimension $\ell \leq m+1$ that is vertex-wise joinable to a subsimplex $\sigma'$ of $\sigma$ of dimension $\leq m-\ell$, then $\rho$ is $(m+1)$-joinable to $\sigma'$.
    \end{itemize}
    Indeed, Condition (ii) implies that ${\rm Lk}^{\downarrow}(x)$ is $(m+1)$-flag with respect to $\sigma$, and Condition (i) combined with Condition (ii) implies that every vertex in ${\rm Lk}^{\downarrow}(x)$ is $(m+1)$-joinable to some $\left(\frac{C(m)}{2}-\beta\right)$-pseudoface of $\sigma$. Thus, Lemma \ref{lem:zaremsky-connectivity-lemma} implies that ${\rm Lk}^{\downarrow}(x)$ is $m$-connected, since we chose $C(m)$ such that $\left\lfloor\frac{C(m)}{2\beta}\right\rfloor-1\geq m$.

    \emph{We first prove (ii):} 
     Since $\rho$ is vertex-wise joinable to $\sigma'$, the leaves defining the carets of $\rho$ are disjoint from the leaves defining the carets of $\sigma'$. Since $\rho$ is a simplex of ${\rm Lk}^{\downarrow}(x)$, $T$ can be rearranged to a tree $T_{\rho}$, which is an elementary $\rho$-expansion. By Lemma \ref{lem:maximum-in-alternative-histories} and definition of $\alpha$ we can obtain a tree $S$ which is an elementary $\rho$-expansion \emph{and} a rearrangement of a tree with history $(M_1,\cdots, M_k)$ with $M_1\leq \alpha$. 
     
     We will now use this tree $S$ to construct a tree $R$ which is an elementary $(\rho\sqcup \sigma')$-expansion. We illustrate this construction in Figure \ref{fig:connectivity-of-links}.  
     \begin{itemize}
         \item[(I)] We first grow the binary tree $T_{\sigma}$, using $C(m)$ expansions of type $1$. The expansion producing $T_{\sigma}$ is indicated in red, purple and magenta in Figure \ref{fig:connectivity-of-links}.
         \item[(II)] By (1) the tree $T_{\sigma}$ has $L_i(T_0)$ leaves of type $i$ that are not in $T_{\sigma}\setminus T_1$. Our assumptions on the $N_i$ guarantee that we can use these leaves to perform the sequence of leaf expansions with history $(M_1,\cdots, M_k)$ that are used to produce the tree $S$. This results in a tree $T_2$ that has the elements of $\rho\sqcup \sigma'$ as carets and where none of the leaves of $T_{\sigma}\setminus T_1$ were expanded. In Figure \ref{fig:connectivity-of-links} the extensions performed in this step are drawn in green and orange. 
         \item[(III)] Finally, we perform $N_1-M_1-C(m)\geq 0$ expansions of type $1$ and $N_i-M_i\geq 0$ expansions of type $i$ ``linearly'' in leaves of $T_2$ to obtain a tree $R$ which is a rearrangement of $T'$ (and thus of $T$) and is an elementary expansion in the carets in the disjoint union $\rho\sqcup \sigma'$. In Figure \ref{fig:connectivity-of-links} this is indicated in blue with the new terminal caret $\rho$ still indicated in green.
     \end{itemize}
     By choosing an identification of the leaves of $R$ with the leaves of $x$ that maps the terminal leaves of the carets of $R$ contained in the disjoint union $\rho \sqcup \sigma'$ bijectively to their corresponding leaves in $x$, we deduce that $\rho$ is $(m+1)$-joinable to $\sigma'$ in ${\rm Lk}^{\downarrow}(x)$.
        
    \begin{figure}
        \centering
        \includegraphics[width=.8\linewidth]{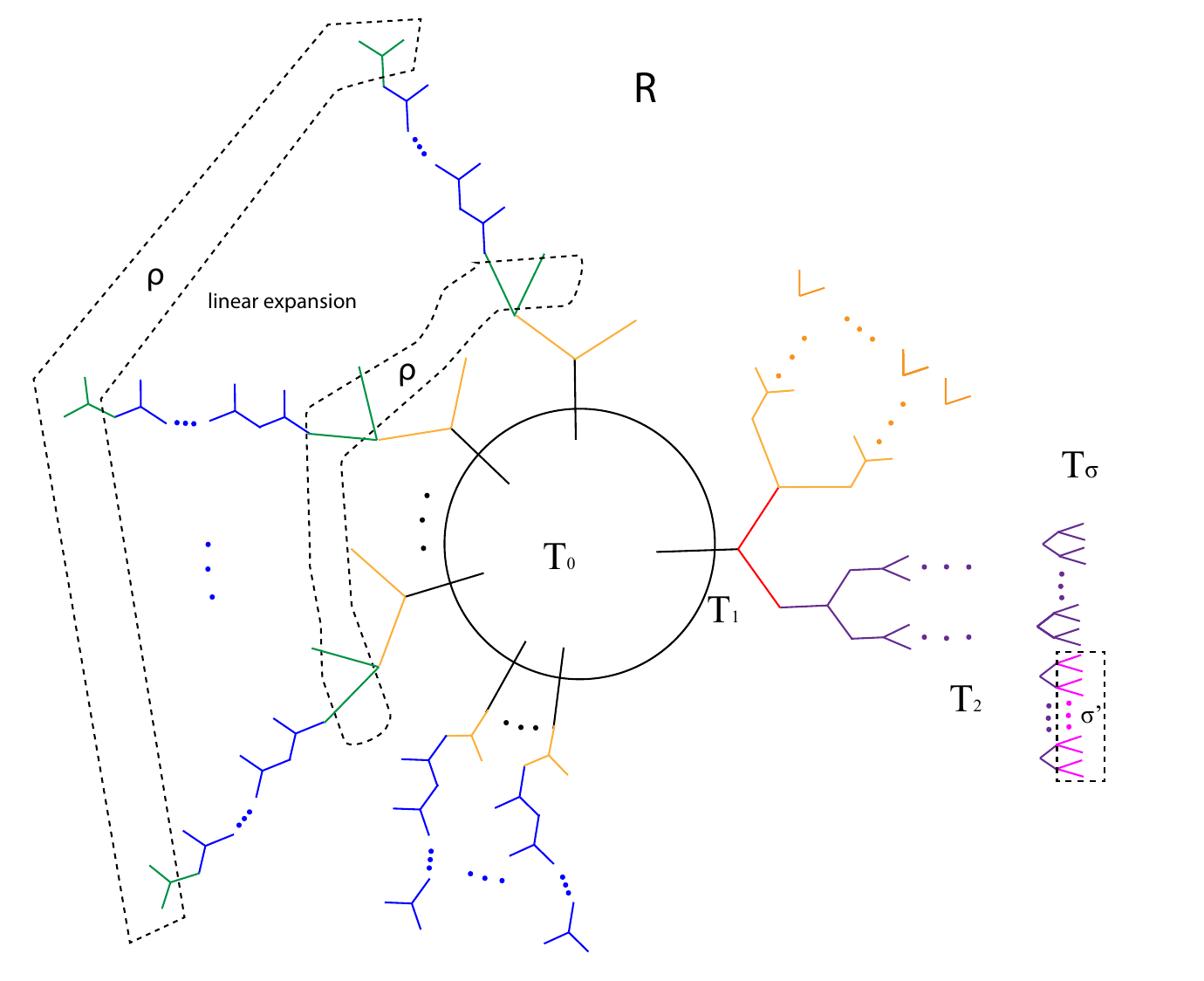}
        \caption{The tree $R$ constructed in the proof of Lemma \ref{lem:connectivity-of-descending-links}. We indicate the reducible carets from $\rho$ (resp. $\sigma'$) in green (resp. magenta).}
        \label{fig:connectivity-of-links}
    \end{figure}

    \emph{The proof of (i) is similar to the proof of (ii):}
    Assume that we have chosen a finite set of leaves of $T$ that form the terminal leaves of a caret $v$ in some rearrangement of $T$ (or equivalently that $v$ is a vertex of ${\rm Lk}^{\downarrow}(x)$). By definition of $\beta$, at most $\beta$ carets corresponding to vertices of $\sigma$ contain one of these leaves. Our definition of $\alpha$ and Lemma \ref{lem:maximum-in-alternative-histories} imply that we can rearrange $T'$ (and thus $T$) as follows:
    \begin{itemize}
        \item We first remove all $\gamma\leq \beta$ carets from $T_{\sigma}$ that contain one of the leaves of $v$, producing a subtree $T_{\sigma,v}\subset T_{\sigma}$ with history $(C(m)-\gamma,0,\dots, 0)$
        \item By definition of $\alpha$ we can then replace the remaining history $(N_1-C(m)+\gamma,N_2,\cdots, N_k)$ by an alternative history that from $T_{\sigma,v}$ produces a tree $S$ which is a rearrangement of $T'$ and has the leaves corresponding to $v$ as terminal leaves of a caret of the desired expansion type. Step (1) of our construction process for $T'$ guarantees that we can obtain $S$ from $T_{\sigma,v}$ without expanding any leaves of the carets that we attached to construct $T_{\sigma,v}$ from $T_1$.
    \end{itemize}
    This proves that $S$ is an elementary expansion of a subtree obtained by attaching the caret $v$, as well as the $\geq \frac{C(m)}{2}-\beta$ carets of $\sigma$ contained in $T_{\sigma,v}$, or equivalently that $v$ is connected in ${\rm Lk}^{\downarrow}(x)$ to all but $\gamma\leq \beta$ vertices in $\sigma$. Since $v$ was an arbitrary vertex of ${\rm Lk}^{\downarrow}(x)$, we proved that $\sigma$ is a $\beta$-ground for ${\rm Lk}^{\downarrow}(x)$.

   This completes the proof of (i) and (ii) and thus the proof of the lemma.  
    \end{proof}

\section{Analysis of the cell stabilisers}
\label{sec:finiteness-properties-of-cell-stabilisers}

In this section we will describe the cell stabilisers of the $\mathrm{RP}_G(\mathcal{T},\mathcal{B})$-action on $\mathfrak{X}$. This will later allow us to determine their finiteness properties. 

\begin{lem}\label{lem:cell-stabilisers}
    Assume that the $G$-action on $\mathcal{T}$ is strongly faithful and let $C=[y,x]$ be a $k$-cube in $\mathfrak{X}$. Then there is a short exact sequence 
    \[
        1\to N \to \mathrm{Stab}_{\mathrm{RP}_G(\mathcal{T},\mathcal{B})}(C)\to Q\to 1
    \]
    where $N$ is isomorphic to a direct product of finitely many edge stabilisers of the $G$-action on $\mathcal{T}$ and $Q$ is a finite group.
\end{lem}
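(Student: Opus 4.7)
The plan is to choose common representatives $y=[T,f]$ and $x=[T',f]$ with $T\subseteq T'$ obtained by attaching carets $c_1,\ldots,c_k$ at leaves $\ell_1,\ldots,\ell_k$ of $T$ (possible since $y\preccurlyeq x$), and, after conjugating $\mathrm{Stab}_{\mathrm{RP}_G(\mathcal{T},\mathcal{B})}(C)$ by $f$, reduce to the case $f=\mathrm{id}$. I will then identify $N$ with the pointwise stabiliser of $V(T)$ inside $\mathrm{Stab}(C)$ and $Q$ with the induced permutation group of the finite set $V(T)$.

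First, since $y$ and $x$ are the unique minimum and maximum of the Boolean lattice $[y,x]$ and $\mathrm{RP}_G(\mathcal{T},\mathcal{B})$ acts on $\mathcal{P}$ by poset automorphisms, every $g\in\mathrm{Stab}(C)$ fixes $y$. The definition of $\sim$ then gives a well-defined homomorphism $\Phi\colon\mathrm{Stab}(C)\to\mathrm{Sym}(V(T))$ whose image $Q$ is finite. Next, for $g\in N:=\ker\Phi$, fixing $y$ forces $g$ to have rigid support $\mathrm{int}(T)$, so on each piece $P_\ell$ of $\mathcal{T}\setminus\mathrm{int}(T)$ attached to a leaf $\ell$ of $T$, $g$ acts by a single element $g_\ell\in G$. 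Since $g$ fixes $\ell$ and preserves the piece $P_\ell$ setwise, $g_\ell$ fixes the unique edge $e_\ell$ of $T$ incident to $\ell$, so $g_\ell\in\mathrm{Stab}_G(e_\ell)$. This yields a homomorphism $\Psi\colon N\to\prod_{\ell}\mathrm{Stab}_G(e_\ell)$, the product ranging over the finitely many leaves of $T$, and the strong faithfulness assumption---no non-trivial element of $G$ acts as the identity on a piece---makes $\Psi$ injective.

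The main step will be surjectivity of $\Psi$. Given a tuple $(g_\ell)_\ell\in\prod_\ell\mathrm{Stab}_G(e_\ell)$, I define $g$ by setting $g\vert_{V(T)}=\mathrm{id}$ and $g\vert_{V(P_\ell)}=g_\ell$ for each $\ell$. This produces a rigid permutation with support $\mathrm{int}(T)$, and because $\mathcal{B}\subseteq V(T_0)\subseteq V(T)$ it lies in $\mathrm{RP}_G(\mathcal{T},\mathcal{B})$. The remaining work is to verify $g\in\mathrm{Stab}(C)$, i.e. that $g\cdot[T_v,\mathrm{id}]=[T_v,\mathrm{id}]$ for each admissible $T_v$ with $T\subseteq T_v\subseteq T'$, which amounts to showing that $g$ restricts to a bijection of $V(T_v)$ and has rigid support $\mathrm{int}(T_v)$. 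The rigid support condition passes up automatically from $\mathrm{int}(T)\subseteq\mathrm{int}(T_v)$: each piece of $\mathcal{T}\setminus\mathrm{int}(T_v)$ is contained in a piece of $\mathcal{T}\setminus\mathrm{int}(T)$, on which $g$ already acts as a single $G$-element. For the bijection condition I will invoke Lemma \ref{lem:unique-carets}: since $g_{\ell_i}\in\mathrm{Stab}_G(e_{\ell_i})$ preserves the uniquely determined caret $c_i$ at $\ell_i$, $g$ preserves each $V(c_i)$ setwise and hence also $V(T_v)=V(T)\cup\bigcup_{c_i\subseteq T_v}V(c_i)$.

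Assembling these three parts produces the desired short exact sequence with $N\cong\prod_{\ell}\mathrm{Stab}_G(e_\ell)$---a finite direct product of edge stabilisers indexed by the leaves of $T$---and $Q$ finite. I expect the main obstacle to be precisely the surjectivity verification in the previous paragraph: glueing edge-stabiliser data into a cube-stabilising rigid permutation requires both the strong faithfulness hypothesis (to make the local $G$-elements unambiguous) and the caret uniqueness of Lemma \ref{lem:unique-carets} (to ensure that the resulting map respects every intermediate admissible sub-tree between $T$ and $T'$).
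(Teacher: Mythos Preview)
Your proposal is correct and follows essentially the same approach as the paper: reduce to $f=\mathrm{id}$, take $Q$ to be the induced permutation group of $V(T)$, and identify the kernel $N$ with $\prod_{\ell}\mathrm{Stab}_G(e_\ell)$ over the leaves of $T$ via strong faithfulness and the no-inversions assumption. The only difference is cosmetic: you spell out the surjectivity of $\Psi$ (invoking caret uniqueness to check that the glued element stabilises every intermediate $[T_v,\mathrm{id}]$), whereas the paper simply asserts that extending by the identity on $T$ yields an element of $N$; in fact it suffices to check that the glued $g$ fixes $y$ and $x$, since the action preserves the partial order.
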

\begin{proof}
    We may assume that $y=[T',f]$, $x=[T,f]$ and $T$ is obtained from $T'$ by attaching $k$ carets to leaves of $T'$. Since $\mathrm{RP}_G(\mathcal{T},\mathcal{B})$ acts transitively on the second entry by post-composition, we may further assume that $y=[T',id_{\mathcal{T}}]$ and $x=[T,id_{\mathcal{T}}]$. 
    
    Then $g\in \mathrm{Stab}_{\mathrm{RP}_G(\mathcal{T},\mathcal{B})}(C)$ if and only if $g(T)=T$, $g$ is supported on $T$, and $g$ permutes the carets that we need to attach to leaves of $T$ to obtain $T'$ preserving caret types and without breaking any of their edges. In particular, $\mathrm{Stab}_{\mathrm{RP}_G(\mathcal{T},\mathcal{B})}(C)$ acts on $V(T)$ by permutations that map interior vertices to interior vertices and preserve the $\mathfrak{G}$-colouring of the leaves of $T$. Let $N\unlhd \mathrm{Stab}_{\mathrm{RP}_G(\mathcal{T},\mathcal{B})}(C)$ be the kernel of this action, and let $Q$ be the finite quotient group describing the action on $T$. Assume now that $g\in N$. Then $g|_{T}=id_T$ and thus $g$ fixes the leaves of $T$ pointwise. Since $G$ acts on $\mathcal{T}$ without inversions and the action is strongly faithful, $g$ can be identified with an element of the direct product of the $G$-stabilisers of the edges corresponding to the leaves of $T$. Conversely, every such element induces a unique element of $N$ by extending by the identity on $T$. This completes the proof.
\end{proof}

\begin{cor}\label{cor:finiteness-properties-of-cell-stabilisers}
    If the $G$-action on $\mathcal{T}$ is strongly faithful and all its edge stabilisers are of type $F_n$, then all cell stabilisers of the $\mathrm{RP}_G(\mathcal{T},\mathcal{B})$-action on $\mathfrak{X}$ are of type $F_n$.
\end{cor}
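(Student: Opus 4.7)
The plan is to derive the corollary as a direct consequence of Lemma \ref{lem:cell-stabilisers} together with two standard closure properties of the class of groups of type $F_n$. Given a cell $C$ of $\mathfrak{X}$, Lemma \ref{lem:cell-stabilisers} provides a short exact sequence
\[
    1 \to N \to \mathrm{Stab}_{\mathrm{RP}_G(\mathcal{T},\mathcal{B})}(C) \to Q \to 1
\]
with $N$ isomorphic to a finite direct product of edge stabilisers of the $G$-action on $\mathcal{T}$ and $Q$ a finite group.

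First I would observe that a finite direct product of groups of type $F_n$ is again of type $F_n$; this follows by iteratively applying the fact that if $A$ and $B$ are of type $F_n$, then so is $A\times B$ (take the product of classifying spaces with finite $n$-skeleta). Since each edge stabiliser is of type $F_n$ by hypothesis, the normal subgroup $N$ is of type $F_n$.

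Next, finiteness properties are preserved under extensions: if the kernel and quotient of a short exact sequence are both of type $F_n$, then the middle group is as well. Since $Q$ is finite, it is of type $F_\infty$, and in particular of type $F_n$. Applying this to the sequence above yields that $\mathrm{Stab}_{\mathrm{RP}_G(\mathcal{T},\mathcal{B})}(C)$ is of type $F_n$. Since $C$ was an arbitrary cell of $\mathfrak{X}$, the conclusion follows. There is no real obstacle here; the entire content of the corollary is packaged into Lemma \ref{lem:cell-stabilisers}, and the proof amounts to invoking the two standard closure properties of $F_n$.
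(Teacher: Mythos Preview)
Your proof is correct and follows essentially the same approach as the paper's own proof. The paper condenses your two closure properties into the single statement that a group which is virtually a finite direct product of groups of type $F_n$ is of type $F_n$, but this is just a repackaging of the same argument.
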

\begin{proof}
    This is an immediate consequence of Lemma \ref{lem:cell-stabilisers} and the fact that a group that is virtually a direct product of finitely many groups of type $F_n$ is of type $F_n$.
\end{proof}

\section{Oligomorphic actions}
\label{sec:oligomorphic-actions}
To deduce Theorem \ref{thm:main-actions-on-trees} from the above results, we apply results of Belk--Zaremsky on twisted Brin--Thompson groups. For every group of permutations $H$ of a set $S$, in \cite{BelZar-20} Belk and Zaremsky introduce a group $SV_H$, which they call the twisted Brin--Thompson group. They prove that $SV_H$ is an infinite simple group that contains $H$ as a subgroup and use it to prove new embedding results of groups in infinite simple groups. For instance, their result plays a key role in the recent proof of the Boone--Higman Conjecture for hyperbolic groups by Belk, Bleak, Matucci and Zaremsky \cite{BBMZ-hyperbolic-23}. Here we will use their work to prove that the Boone--Higman Conjecture also holds for interesting classes of graphs of groups, that we call generalised Baumslag--Solitar groups. To state the result from \cite{BelZar-20} that we need, we require the following definition.
\begin{defn}\label{defn:oligmc}
    An action of a group $H$ on a set $S$ is called \emph{oligomorphic} if for every $k\in \mathbb{N}$ there are only finitely many $H$-orbits of $k$-tuples. 
\end{defn}

\begin{thm}[{Belk--Zaremsky}]\label{thm:Belk-Zaremsky}
    Let $H$ be a group that acts by oligomorphic permutations on a set $S$. Let $n\in \mathbb{N}\cup \left\{\infty\right\}$. If $H$ is of type $F_n$, and the stabiliser in $H$ of any finite subset of $S$ is of type $F_n$, then $SV_H$ is of type $F_n$. In particular, $H$ is a subgroup of a simple group of type $F_n$.
\end{thm}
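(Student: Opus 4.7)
My plan is to prove Theorem~\ref{thm:Belk-Zaremsky} by building a contractible cube complex on which $SV_H$ acts and running Brown's criterion combined with Bestvina--Brady Morse theory, in direct parallel with the Stein--Farley strategy of Sections \ref{sec:Stein-Farley-complex}--\ref{sec:finiteness-properties-of-cell-stabilisers}. The first step is to set up the Brin--Thompson-style combinatorics: recall that $SV_H$ acts on the disjoint union of copies of the Cantor space indexed by $S$, and its elements are realised by finite ``twisted tree pair diagrams'' in which leaves of a dyadic partition carry $S$-labels that are compared via an element of $H$. I would introduce a poset $\mathcal{Q}$ whose vertices are equivalence classes $[T,\phi]$ of labelled dyadic partitions $T$ with $\phi\in SV_H$, under the relation that identifies pairs differing by a rigid twisted symbol outside $T$, ordered by dyadic subdivision. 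Standard directed-poset arguments then show that the Stein subcomplex $\mathfrak{Y}$ spanned by simultaneous elementary leaf expansions carries a natural cube complex structure and is contractible.

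Next I would verify the hypotheses of Theorem~\ref{thm:Brown-and-BB} for the height function recording the number of leaves. The oligomorphic hypothesis enters precisely here: any sublevel set consists of partitions with a bounded number of $S$-labelled leaves, and oligomorphicity says $H$ has finitely many orbits on $k$-tuples for each $k$, so there are only finitely many $SV_H$-orbits of cells of bounded height. For cell stabilisers, the analysis of Lemma~\ref{lem:cell-stabilisers} adapts to produce an extension
\[
1 \to N \to \mathrm{Stab}_{SV_H}(C) \to Q \to 1,
\]
where $Q$ is a finite group of combinatorial symmetries of the partition and $N$ is a direct product, indexed by leaves of $T$, of stabilisers in $H$ of the associated finite tuples in $S$. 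The hypothesis on finite-subset stabilisers, together with the closure of $F_n$ under finite direct products and finite extensions (as in Corollary~\ref{cor:finiteness-properties-of-cell-stabilisers}), then gives cell stabilisers of type $F_n$.

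The main obstacle, as in the present paper, is the connectivity of the descending links. These are best described as a twisted matching complex: a simplex corresponds to a family of pairwise disjoint leaves of $T$, together with $H$-compatible label data, that can be simultaneously collapsed. The binary-tree analogue of the viral expansion property holds automatically, so for sufficiently tall vertices I would exhibit a large ``pseudosimplex'' of reducible caret pairs of a uniform type and invoke the generalisation of Zaremsky's connectivity lemma (Lemma~\ref{lem:zaremsky-connectivity-lemma}). Verifying the $m$-flag and $\beta$-ground conditions requires an analogue of Lemma~\ref{lem:maximum-in-alternative-histories}, and this is the step where the oligomorphic hypothesis works hardest: it is what lets a Dickson-type finiteness argument control the $H$-orbits of possible label rearrangements. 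Once these connectivity bounds and the finiteness of stabilisers are in hand, Theorem~\ref{thm:Brown-and-BB} yields that $SV_H$ is of type $F_n$; simplicity of $SV_H$ and the canonical embedding $H\hookrightarrow SV_H$ are built into its construction, giving the ``In particular'' clause.
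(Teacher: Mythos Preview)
The paper does not prove this theorem at all: it is quoted as a result of Belk--Zaremsky, and the ``proof'' in the paper consists solely of citations to \cite[Theorem~D and Theorem~3.4]{BelZar-20}. So you are not being asked to reproduce an argument from this paper; you are attempting to reprove a black-box input from the literature.

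That said, your outline is broadly the strategy Belk--Zaremsky themselves use, but two of the analogies you draw from the present paper are miscalibrated. First, the cell-stabiliser analysis does not transfer in the form you describe. In the rigid permutation group $\mathrm{RP}_G(\mathcal{T})$ the pieces outside a finite subtree are genuinely independent, so the kernel $N$ in Lemma~\ref{lem:cell-stabilisers} is a direct product of edge stabilisers. In $SV_H$, by contrast, a single global element $h\in H$ acts on all $S$-labels simultaneously; you cannot choose a different $h$ for each leaf. Consequently the stabiliser of a vertex $[T,\phi]$ is a finite extension of the pointwise stabiliser in $H$ of the finite set of $S$-labels carried by $T$, not a direct product indexed by leaves. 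This is exactly why the hypothesis is phrased as ``the stabiliser in $H$ of any finite subset of $S$ is of type $F_n$'' rather than anything about products.

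Second, your attribution of where oligomorphicity does the work is off. In the dyadic Brin--Thompson setting there is a single caret type, so the descending links are ordinary matching-type complexes whose high connectivity is classical; no analogue of Lemma~\ref{lem:maximum-in-alternative-histories} or any Dickson-type argument is needed. The oligomorphic hypothesis is used for cocompactness of sublevel sets (finitely many $H$-orbits of $k$-tuples of labels gives finitely many $SV_H$-orbits of vertices at a given height), which you correctly identified, but it plays no role in the link connectivity. The elaborate history machinery of Section~\ref{sec:descending-links} exists precisely because the trees in this paper have many interacting caret types; that complication is absent for $SV_H$.
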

\begin{proof}
    This is an immediate consequence of \cite{BelZar-20}. The first part is \cite[Theorem D]{BelZar-20}. The ``in particular''-part follows from the fact that by \cite[Theorem 3.4]{BelZar-20} $SV_H$ is simple and contains $H$ as a subgroup, see \cite[Section 3]{BelZar-20}.
\end{proof}

\begin{lem}\label{lem:RP-is-oligomorphic}
    Let $\Gamma$ be a graph equipped with a faithful $G$-action. Then $G$ is a subgroup of $\mathrm{RP}_{G}(\Gamma)$ and the $\mathrm{RP}_{G}(\Gamma)$-action on $V(\Gamma)$ is oligomorphic.
\end{lem}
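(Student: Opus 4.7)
The plan is to handle the two assertions of the lemma separately. For the first assertion, I would note that any element $g\in G$ acts on $V(\Gamma)$ as a rigid permutation in the sense of the definition: taking $F$ to be the empty subgraph (or any finite subgraph) of $\Gamma$, the restriction of $g$ to every piece $P$ of $\Gamma\setminus F$ agrees tautologically with $g|_P$. This yields a group homomorphism $G\to \mathrm{RP}_G(\Gamma)$ whose kernel coincides with the kernel of the $G$-action on $\Gamma$, which is trivial by the faithfulness hypothesis. So $G$ embeds into $\mathrm{RP}_G(\Gamma)$.

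For the oligomorphicity of the $\mathrm{RP}_G(\Gamma)$-action on $V(\Gamma)$, the key observation I would establish is that $\mathrm{RP}_G(\Gamma)$ contains the full finitary symmetric group $S_{\mathrm{fin}}(V(\Gamma))$. Indeed, if $\phi$ is a permutation of $V(\Gamma)$ that fixes all but finitely many vertices, choose a finite subgraph $F\subset \Gamma$ whose vertex set contains the (finite) support of $\phi$; then on every piece of $\Gamma\setminus F$, $\phi$ acts as the identity, which is precisely the action of $1_G$. Hence $\phi$ is rigid with rigid support $F$.

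From here the oligomorphicity is immediate. If $V(\Gamma)$ is finite the statement is trivial, so assume $V(\Gamma)$ is infinite. The finitary symmetric group on any infinite set is $k$-transitive on ordered $k$-tuples of pairwise distinct elements for every $k\in \mathbb{N}$; using these finitary permutations, which already lie in $\mathrm{RP}_G(\Gamma)$, the $\mathrm{RP}_G(\Gamma)$-orbits on $V(\Gamma)^k$ are classified by the possible equality patterns of the coordinates, that is, by set partitions of $\{1,\dots,k\}$. There are $B_k$ such partitions (the $k$-th Bell number), which is finite, so the action is oligomorphic.

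There is no substantive obstacle here: the only small point to notice is that rigidity is a mild enough condition to absorb \emph{all} finitary permutations, after which the conclusion is the classical oligomorphicity of $S_{\mathrm{fin}}(V(\Gamma))$ and does not require any further information about $G$ or the combinatorics of $\Gamma$.
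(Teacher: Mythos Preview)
Your proof is correct and follows essentially the same route as the paper: embed $G$ via faithfulness, observe that all finitely supported permutations are rigid, and conclude oligomorphicity from the high transitivity of the finitary symmetric group. In fact, you are slightly more careful than the paper, which asserts a \emph{single} orbit of $k$-tuples; your observation that the orbits are parametrised by equality patterns (hence there are $B_k$ of them) is the accurate statement, though of course either way yields only finitely many orbits.
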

\begin{proof}
    If $G$ acts faithfully on $\Gamma$, then it follows from the definition of $\mathrm{RP}_{G}(\Gamma)$ that $G$ is a subgroup. Since by definition $\mathrm{RP}_{G}(\Gamma)$ contains all finitely supported permutations, for every $k\geq 1$ the $\mathrm{RP}_{G}(\Gamma)$-action on $V(\Gamma)$ has finitely many $\mathrm{RP}_{G}(\Gamma)$-orbits of $k$-tuples, implying that the action is oligomorphic.
\end{proof}

\begin{cor}\label{cor:rp-embeds-in-simple-group}
    Let $\Gamma$ be a graph equipped with a faithful $G$-action and let $n\in \mathbb{N}\cup \left\{\infty\right\}$. If $\mathrm{RP}_G(\Gamma,\mathcal{B})$ is $F_n$ for all finite subsets $\mathcal{B}$ of $V(\Gamma)$, then $\mathrm{RP}_{G}(\Gamma)$ embeds in a simple group of type $F_n$.
\end{cor}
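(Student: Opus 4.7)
The plan is to apply the Belk--Zaremsky Theorem (Theorem~\ref{thm:Belk-Zaremsky}) directly with $H = \mathrm{RP}_G(\Gamma)$ acting on the set $S = V(\Gamma)$. All the ingredients needed for that theorem are essentially packaged into the hypotheses and the preceding lemma, so the argument should be very short.

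First I would invoke Lemma~\ref{lem:RP-is-oligomorphic} to conclude that the $\mathrm{RP}_G(\Gamma)$-action on $V(\Gamma)$ is oligomorphic (in fact the action is transitive on $k$-tuples of distinct vertices for every $k$, since $\mathrm{RP}_G(\Gamma)$ contains all finitely supported permutations of $V(\Gamma)$). Next I would observe that the hypothesis ``$\mathrm{RP}_G(\Gamma,\mathcal{B})$ is of type $F_n$ for all finite $\mathcal{B}\subseteq V(\Gamma)$'' simultaneously supplies the two finiteness conditions required by Theorem~\ref{thm:Belk-Zaremsky}: taking $\mathcal{B}=\emptyset$ gives that $\mathrm{RP}_G(\Gamma) = \mathrm{RP}_G(\Gamma,\emptyset)$ itself is of type $F_n$, and for an arbitrary finite $\mathcal{B}\subseteq V(\Gamma)$ the pointwise stabiliser of $\mathcal{B}$ in $\mathrm{RP}_G(\Gamma)$ is, by definition, exactly $\mathrm{RP}_G(\Gamma,\mathcal{B})$, which is of type $F_n$ by assumption.

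Having verified the hypotheses, I would then apply Theorem~\ref{thm:Belk-Zaremsky} to conclude that the twisted Brin--Thompson group $SV_{\mathrm{RP}_G(\Gamma)}$ is a simple group of type $F_n$ containing $\mathrm{RP}_G(\Gamma)$ as a subgroup. This yields the desired embedding.

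There is essentially no obstacle here, since all the genuine work has been done upstream: the oligomorphicity in Lemma~\ref{lem:RP-is-oligomorphic}, the identification of the stabilisers of finite sets with the groups $\mathrm{RP}_G(\Gamma,\mathcal{B})$ built into the definition of the rigid permutation group, and the Belk--Zaremsky machinery producing the ambient simple group. The only thing one must be mildly careful about is to note that the ``stabiliser of a finite subset'' in Theorem~\ref{thm:Belk-Zaremsky} means the pointwise stabiliser, which matches precisely the definition of $\mathrm{RP}_G(\Gamma,\mathcal{B})$ given in Section~1.
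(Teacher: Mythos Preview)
Your proposal is correct and matches the paper's own proof, which simply states that the result is an immediate consequence of Theorem~\ref{thm:Belk-Zaremsky} and Lemma~\ref{lem:RP-is-oligomorphic}. Your version just spells out explicitly how the hypothesis with $\mathcal{B}=\emptyset$ gives $\mathrm{RP}_G(\Gamma)$ of type $F_n$ and how $\mathrm{RP}_G(\Gamma,\mathcal{B})$ is by definition the pointwise stabiliser of $\mathcal{B}$, which is exactly what the paper leaves implicit.
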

\begin{proof}
    This is an immediate consequence of Theorem \ref{thm:Belk-Zaremsky} and Lemma \ref{lem:RP-is-oligomorphic}.
\end{proof}

\section{Main Theorem about actions on trees}\label{sec:mainthm-action-on-tree}
We now have all the ingredients required to state and prove our main result. 
\begin{thm}\label{thm:main-strongly-faithful-actions-on-trees}
    Let $G$ be a group that acts strongly faithfully and without inversions on an infinite tree $\mathcal{T}$ without leaves such that all edge stabilisers are of type $F_n$ and such that there is a system of gates $\mathfrak{G}$ and a tree $T_0$ for which $(\mathrm{RP}_G(\mathcal{T}),\mathfrak{G},T_0)$ has the viral expansion property. Then $\mathrm{RP}_G(\mathcal{T})$ embeds in a simple group of type $F_n$.
\end{thm}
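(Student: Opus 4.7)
The plan is to apply Corollary~\ref{cor:rp-embeds-in-simple-group}, which reduces the problem to showing that $\mathrm{RP}_G(\mathcal{T},\mathcal{B})$ is of type $F_n$ for every finite subset $\mathcal{B}\subset V(\mathcal{T})$. Fix such a $\mathcal{B}$. The hypothesis that $(\mathrm{RP}_G(\mathcal{T}),\mathfrak{G},T_0)$ has the viral expansion property gives us in particular $M_{ii}\geq 3$ for all $i$, so by the argument of Remark~\ref{rem:viral-expansion-property} we may enlarge $T_0$ to an admissible tree $T_0'$ containing $\mathcal{B}$ in its interior, while retaining $L_i(T_0')\geq 2$. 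Thus $(\mathrm{RP}_G(\mathcal{T},\mathcal{B}),\mathfrak{G},T_0')$ also satisfies the viral expansion property, and we may work with the associated Stein--Farley complex $\mathfrak{X}$ equipped with its natural height function $h$.

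To establish $\mathrm{RP}_G(\mathcal{T},\mathcal{B})$ is of type $F_n$, I plan to apply Theorem~\ref{thm:Brown-and-BB} to the action of $\mathrm{RP}_G(\mathcal{T},\mathcal{B})$ on $\mathfrak{X}$. The four required hypotheses are already assembled in the preceding sections. First, $\mathfrak{X}$ is contractible by Proposition~\ref{prop:stei-cpx-contractible}. Second, the sublevel set $\mathfrak{X}_a$ is cocompact: since $G$ acts cocompactly on $\mathcal{T}$, there are only finitely many $G$-orbits of admissible trees with at most $a$ leaves, and by Lemma~\ref{lem:equivalence-of-pairs} each such tree yields a single $\mathrm{RP}_G(\mathcal{T},\mathcal{B})$-orbit of vertices of $\mathfrak{X}$. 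Third, by Corollary~\ref{cor:finiteness-properties-of-cell-stabilisers}, the strong faithfulness of the $G$-action and the $F_n$ hypothesis on edge stabilisers imply that every cell stabiliser of the $\mathrm{RP}_G(\mathcal{T},\mathcal{B})$-action on $\mathfrak{X}$ is of type $F_n$. Fourth, Proposition~\ref{prop:connectivity-of-descending-links}, which crucially exploits the viral expansion property, guarantees that for every $m\in\mathbb{N}$ there exists $r$ such that the descending link of every vertex of height at least $r$ is $m$-connected.

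Combining these four facts, Theorem~\ref{thm:Brown-and-BB} delivers that $\mathrm{RP}_G(\mathcal{T},\mathcal{B})$ is of type $F_n$. Since this holds for every finite $\mathcal{B}\subset V(\mathcal{T})$, and the $G$-action is faithful (as it is even strongly faithful), Corollary~\ref{cor:rp-embeds-in-simple-group} then embeds $\mathrm{RP}_G(\mathcal{T})$ into a simple group of type $F_n$, which is exactly the claim.

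The proof is thus essentially a synthesis: the real work has already been done in Sections~\ref{sec:Stein-Farley-complex}--\ref{sec:finiteness-properties-of-cell-stabilisers} and in the Belk--Zaremsky machinery of Section~\ref{sec:oligomorphic-actions}. The only point that requires a moment of care is the parameter $\mathcal{B}$: one must verify that enlarging $T_0$ to contain $\mathcal{B}$ preserves the viral expansion property and that all the earlier constructions and connectivity analyses go through for $\mathrm{RP}_G(\mathcal{T},\mathcal{B})$ with base tree $T_0'$, but this is immediate from Remark~\ref{rem:viral-expansion-property} and the fact that every lemma in Sections~\ref{sec:Stein-Farley-complex}--\ref{sec:finiteness-properties-of-cell-stabilisers} was already stated relative to an arbitrary finite pointwise-fixed set $\mathcal{B}\subset T_0$.
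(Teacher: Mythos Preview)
Your proposal is correct and follows essentially the same route as the paper: the paper first isolates the statement that $\mathrm{RP}_G(\mathcal{T},\mathcal{B})$ is of type $F_n$ for every finite $\mathcal{B}$ as Proposition~\ref{prop:main-strongly-faithful-actions-on-trees} (proved exactly via Theorem~\ref{thm:Brown-and-BB}, Proposition~\ref{prop:connectivity-of-descending-links}, and Corollary~\ref{cor:finiteness-properties-of-cell-stabilisers} after enlarging $T_0$ to contain $\mathcal{B}$) and then invokes Corollary~\ref{cor:rp-embeds-in-simple-group}. One cosmetic point: in your cocompactness step you phrase things in terms of $G$-orbits of admissible trees, but since admissible trees are required to contain the fixed base tree $T_0'$, the relevant (and simpler) fact is just that there are only finitely many such trees with at most $a$ leaves, as noted in Section~\ref{sec:BB-Morse-theory}.
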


The main step in the proof of Theorem \ref{thm:main-strongly-faithful-actions-on-trees} is the following intermediate result that is of interest in its own right.
\begin{prop}\label{prop:main-strongly-faithful-actions-on-trees}
    Let $G$ be a group that acts strongly faithfully and without inversions on a tree $\mathcal{T}$. Assume that all edge stabilisers are of type $F_n$ and that there is a system of gates $\mathfrak{G}$ and an admissible tree $T_0\subset \mathcal{T}$ such that $(\mathrm{RP}_G(\mathcal{T}),\mathfrak{G},T_0)$ has the viral expansion property. Then for every finite subset $\mathcal{B}\subset \mathcal{T}$ the rigid permutation group $\mathrm{RP}_G(\mathcal{T},\mathcal{B})$ is of type $F_n$.
\end{prop}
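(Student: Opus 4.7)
The plan is to apply the Brown/Bestvina--Brady criterion (Theorem \ref{thm:Brown-and-BB}) to the action of $\mathrm{RP}_G(\mathcal{T},\mathcal{B})$ on the Stein--Farley complex $\mathfrak{X}$ built in Section \ref{sec:Stein-Farley-complex}, equipped with the height function $h$ that counts leaves. To do this I first have to arrange that the construction of $\mathfrak{X}$ in Section \ref{sec:Stein-Farley-complex} is applicable, since that construction demands that $\mathcal{B}$ lie in the interior of the admissible base tree. So I begin by enlarging the base tree from the hypothesis: using admissibility of $\mathfrak{G}$, pick an admissible finite tree $T_0'\supseteq T_0$ whose interior contains $\mathcal{B}$. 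Using Lemma \ref{lem:linear-algebra-determines-counts} together with $M_{ii}\geq 3$, leaf counts are non-decreasing under leaf expansions and the $M$-matrix is unchanged, so $L_i(T_0')\geq L_i(T_0)\geq 2$ and the triple $(\mathrm{RP}_G(\mathcal{T}),\mathfrak{G},T_0')$ still has the viral expansion property. Then I construct $\mathfrak{X}$ with respect to $T_0'$ and the gate system $\mathfrak{G}$.

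Next I verify the four hypotheses of Theorem \ref{thm:Brown-and-BB} in turn. \emph{Contractibility of $\mathfrak{X}$:} this is Proposition \ref{prop:stei-cpx-contractible}. \emph{$h$ is a height function:} $h$ sends a vertex $[T,f]$ to the number of leaves of $T$, which by Lemma \ref{lem:equivalence-of-pairs} is independent of representative, takes discrete values in $\mathbb{N}$, is $\mathrm{RP}_G(\mathcal{T},\mathcal{B})$-invariant, and extends affinely to cubes; it is non-constant on every edge since $h$ strictly increases under $\preccurlyeq$. \emph{Cocompactness of sublevel sets:} for each $a\in\mathbb{N}$, there are only finitely many admissible subtrees of $\mathcal{T}$ with at most $a$ leaves (the tree $\mathcal{T}$ is locally finite and $T_0'$ is fixed), and $\mathrm{RP}_G(\mathcal{T},\mathcal{B})$ acts transitively on the second coordinate of representatives $(T,f)$, so only finitely many $\mathrm{RP}_G(\mathcal{T},\mathcal{B})$-orbits of cells lie below level $a$. \emph{Cell stabilisers:} strong faithfulness and Corollary \ref{cor:finiteness-properties-of-cell-stabilisers} give that every cell stabiliser is of type $F_n$, in particular of type $F_{n-k}$ for every $k$-cell.

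Finally, for the descending link condition, Proposition \ref{prop:connectivity-of-descending-links} provides, for each $m\in\mathbb{N}$ and in particular for $m=n-1$, a threshold $r\in\mathbb{N}$ such that every vertex $x\in\mathfrak{X}$ with $h(x)\geq r$ has $m$-connected descending link. Assembling these inputs in Theorem \ref{thm:Brown-and-BB} yields that $\mathrm{RP}_G(\mathcal{T},\mathcal{B})$ is of type $F_n$, completing the proof.

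The only genuinely non-trivial work has already been carried out in Sections \ref{sec:connectivity-lemma}--\ref{sec:finiteness-properties-of-cell-stabilisers}; the remaining obstacle here is purely bookkeeping, namely that the passage from the given $T_0$ to a base tree containing $\mathcal{B}$ must not destroy the viral expansion property and must still yield a cube complex whose descending-link analysis from Section \ref{sec:descending-links} applies verbatim. The monotonicity observation above, together with the fact that all inputs to Proposition \ref{prop:connectivity-of-descending-links} depend only on the triple $(\mathrm{RP}_G(\mathcal{T},\mathcal{B}),\mathfrak{G},T_0')$ satisfying the viral expansion property and on the edge-stabiliser data, handles this.
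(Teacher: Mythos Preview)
Your proposal is correct and follows essentially the same route as the paper's own proof: enlarge the base tree so that $\mathcal{B}$ lies in its interior while preserving the viral expansion property, then feed the Stein--Farley complex into Theorem~\ref{thm:Brown-and-BB} via Proposition~\ref{prop:stei-cpx-contractible}, Proposition~\ref{prop:connectivity-of-descending-links}, and Corollary~\ref{cor:finiteness-properties-of-cell-stabilisers}. Your version simply spells out in more detail why the viral expansion property survives the enlargement (via the monotonicity of leaf counts under expansion) and why $h$ is a genuine height function, points the paper leaves implicit.
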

\begin{proof}
    We start by observing that for every finite subset $\mathcal{B}\subset \mathcal{T}$ we can choose an admissible base tree $T_{0,\mathcal{B}}$ containing $\mathcal{B}$ and $T_0$. This ensures that the triple $(\mathrm{RP}_G(\mathcal{T},\mathcal{B}),\mathfrak{G},T_{0,\mathcal{B}})$ also has the viral expansion property. By construction, for all $\mathcal{B}$ the $\mathrm{RP}_G(\mathcal{T},\mathcal{B})$-action on the sublevel sets of the Stein--Farley complex associated with $\mathcal{B}$ and $T_{0,\mathcal{B}}$ is cocompact. Thus, the result follows by combining Theorem \ref{thm:Brown-and-BB} with Proposition \ref{prop:connectivity-of-descending-links} and Corollary \ref{cor:finiteness-properties-of-cell-stabilisers}.
\end{proof}

\begin{proof}[Proof of Theorem \ref{thm:main-strongly-faithful-actions-on-trees}]
 The result is an immediate consequence of Proposition \ref{prop:main-strongly-faithful-actions-on-trees} and Corollary \ref{cor:rp-embeds-in-simple-group}.
\end{proof}

\section{Application to graphs of groups}
\label{sec:graphs-of-groups}

The main application of our techniques is that graphs of groups whose edge groups have finite index in the vertex groups and whose action on the associated Bass--Serre tree is faithful satisfy the Boone--Higman Conjecture. In this section we will prove this result. To do so, we first summarize some results from Bass--Serre theory that we will need. We refer to \cite{Ser-03,ScottWall-79} for two accounts on this subject.

A \emph{(finite) graph of groups} $\mathcal{G}_{\Gamma}$ is a finite connected graph $\Gamma$ where all vertices $v$ and edges $e$ are labelled by groups $G_v$ and $G_e$, and for every edge $e$ there are injective homomorphism $\phi_{e,\iota} : G_e\hookrightarrow G_{\iota(e)},~ \phi_{e,\tau} : G_e\hookrightarrow G_{\tau(e)}$, where $\iota(e)$ is the initial vertex  and $\tau(e)$ is the terminal vertex of $e$. A graph of groups defines a group, called its fundamental group $G=\pi_1(\mathcal{G}_{\Gamma})$. It is the group that arises as fundamental group of the space where one chooses for every $G_v$ and $G_e$ a classifying space and glues them along continuous maps that induce the inclusions of the edge groups in the vertex groups. 

This description allows us to choose a presentation for $G$ as follows. Let $T\subseteq \Gamma$ be a maximal tree and let $G_v\cong \left\langle X_v\mid R_v\right\rangle$. Then $G\cong \langle X \mid R\rangle$, where 
\[
X=\left(\bigsqcup_{v\in V(\Gamma)} X_v\right)\sqcup \left\{e\mid e\in E(\Gamma\setminus T)\right\}
\]
and
\[
    R=\left(\bigsqcup_{v\in V(\Gamma)} R_v\right) \sqcup \left\{ e\cdot \phi_{e,\iota}(g)\cdot e^{-1} = \phi_{e,\tau}(g)\mid e\in E(\Gamma\setminus T),~g\in G_e\right\}.
\]
This presentation depends on the choice of a maximal subtree $T$ and we therefore also denote it by $\pi_1(\mathcal{G}_{\Gamma},T)$.

In the following we will sometimes just refer to $G$ as a graph of groups and drop the words ``fundamental group of''.

The main result of Bass--Serre theory says that decompositions of $G$ as finite graphs of groups are in correspondence with cocompact $G$-actions on trees that do not invert any edges. More precisely, if $G$ acts on a tree $\mathcal{T}$ cocompactly and without edge inversions, then we can give it the structure of a graph of groups with underlying graph $\Gamma=\mathcal{T}/G$. We denote $\pi:\mathcal{T}\to \Gamma$ the projection. The vertex (resp. edge) label of $v\in V(\Gamma)$ (resp. $e\in E(\Gamma)$) is isomorphic to $\mathrm{Stab}_{G}(\widetilde{v})$ (resp. $\mathrm{Stab}_{G}(\widetilde{e})$) for $\widetilde{v}$ (resp. $\widetilde{e}$) any lift of $v$ (resp. $e$) with respect to $\pi$, and the morphisms $\phi_{e,\iota}$ and $\phi_{e,\tau}$ are induced by inclusions of the edge stabilisers in the vertex stabilisers in $\mathcal{T}$.

Conversely, the Bass--Serre tree $\mathcal{T}$ associated with the presentation $\pi_1(\mathcal{G}_\Gamma,T)$ for a maximal tree $T\subset \mathcal{G}$ is defined by the following vertex set, edge set, and attaching maps:
\[
    V(\mathcal{T})=\bigsqcup_{v\in V(\Gamma)} G/G_v, ~ E(\mathcal{T})=\bigsqcup_{e\in E(\Gamma)} G/G_e,
\]
\[
    \iota(g\cdot G_e)= g\cdot G_{\iota(e)},~ \tau(g\cdot G_e)=\left\{\begin{array}{ll} g\cdot e\cdot G_{\tau(e)},& \mbox{ if } e\in E(\Gamma\setminus T)\\ g\cdot G_{\tau(e)},& \mbox{ else. }\end{array}\right.
\]
The $G$-action on $\mathcal{T}$ is by left-multiplication. We observe that the degree of a vertex $\widetilde{v}$ in $\mathcal{T}$ is equal to $\sum_{e\in E(\Gamma),~\iota(e)=v}[G_v:G_e] + \sum_{e\in E(\Gamma),~\tau(e)=v}[G_v:G_e]$. In particular, if $\Gamma$ is a finite graph, then $\mathcal{T}$ is locally finite if and only if the indices of all edge group inclusions in the vertex groups are finite. One can show that this construction does not depend on the choices made up to $G$-equivariant isomorphism. In particular, up to isomorphism the Bass--Serre tree of a graph of groups does not depend on a choice of a maximal tree $T$, justifying why we call it ``the'' Bass--Serre tree of $\mathcal{G}_{\Gamma}$.

Let $\mathcal{G}_\Gamma$ be a graph of groups. A \emph{subgraph of groups} $\mathcal{H}_\Lambda\leq \mathcal{G}_\Gamma$ is a connected subgraph $\Lambda\leq \Gamma$ whose vertices are labelled by subgroups $H_v\leq G_v$ of the vertex groups of $\Gamma$ and whose edges are labelled by the intersection $H_e=G_e\cap \phi_{e,\iota}^{-1}(H_{\iota(e)})\cap \phi_{e,\tau}^{-1}(H_{\tau(e)})$. We can choose spanning trees $T_{\Lambda}$ for $\Lambda$ and $T_{\Gamma}$ for $\Gamma$ such that $T_{\Lambda}\subseteq T_{\Gamma}$. Subsequently, when discussing results about the Bass--Serre tree of a subgraph of groups, we will implicitly assume that it has been constructed with respect to such a choice of spanning trees. It is a straight-forward consequence of the normal form theorems for elements of fundamental groups of graphs of groups (see \cite[Theorem~11 and Corollary~1, page~45]{Ser-03}) that the embeddings of edge and vertex groups induce a canonical embedding $\pi_1(\mathcal{H}_\Lambda)\hookrightarrow \pi_1(\mathcal{G}_\Gamma)$. The following result shows that if we want to check the faithfulness of the action of a graph of groups on its Bass--Serre tree, it suffices to find a suitable subgraph of groups that acts faithfully. This will be useful later, when we apply our methods to explicit examples.

\begin{lem}\label{lem:subgraphs-of-groups}
    With the above notation assume that $\mathcal{H}_\Lambda \leq \mathcal{G}_\Gamma$ is a subgraph of groups so that for every edge $e\in E(\Lambda)$ we have $H_e=G_e$. Denote $H=\pi_1(\mathcal{H}_\Lambda)$, $G=\pi_1(\mathcal{G}_\Gamma)$ and denote by $\mathcal{T}_{\Lambda}$ and $\mathcal{T}_{\Gamma}$ the corresponding Bass--Serre trees. If $H$ acts faithfully on $\mathcal{T}_{\Lambda}$, then $G$ acts faithfully on $\mathcal{T}_{\Gamma}$.
\end{lem}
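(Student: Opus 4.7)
The plan is to analyse the kernel $N := \ker(G \to \mathrm{Aut}(\mathcal{T}_\Gamma))$ of the $G$-action on its Bass--Serre tree and show that it is trivial. The key insight is that the hypothesis $H_e = G_e$ for $e \in E(\Lambda)$ is exactly what is needed to force $N$ to live inside $H$ and, moreover, inside each vertex group $H_v$ for $v \in V(\Lambda)$. Once this is established, the faithful action of $H$ on $\mathcal{T}_\Lambda$ will do the rest of the work.

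First, I would exploit that $N$ is normal in $G$ and that it fixes every vertex of $\mathcal{T}_\Gamma$ (the action being without inversions by construction of the Bass--Serre tree). Since the stabiliser of the vertex $gG_v \in V(\mathcal{T}_\Gamma)$ is $gG_vg^{-1}$, normality of $N$ yields $N \subseteq G_v$ inside $G$ for every $v \in V(\Gamma)$, and by the same argument applied to edges one obtains $N \subseteq G_e$ inside $G$ for every $e \in E(\Gamma)$. Now comes the use of the hypothesis: for any $e \in E(\Lambda)$, the equality $H_e = G_e$ says precisely that the image of $\phi_{e,\iota}: G_e \hookrightarrow G_{\iota(e)}$ lies inside $H_{\iota(e)}$, so inside $G$ we have
\[
N \subseteq G_e = H_e \subseteq H_{\iota(e)} \subseteq H.
\]
Since $\Lambda$ is connected and (implicitly) contains at least one edge, every vertex $v \in V(\Lambda)$ is incident to some edge of $\Lambda$, giving $N \subseteq H_v$ for every $v \in V(\Lambda)$.

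With $N$ now pinned down inside $H$, I would finish by showing that $N$ acts trivially on $\mathcal{T}_\Lambda$. A vertex of $\mathcal{T}_\Lambda$ has the form $hH_v$ with $h \in H$ and $v \in V(\Lambda)$. For $n \in N$, normality of $N$ in $G$ gives $h^{-1}nh \in N \subseteq H_v$, so $n \cdot hH_v = hH_v$. Hence $N$ fixes every vertex of $\mathcal{T}_\Lambda$, which, since the $H$-action has no inversions, means $N$ acts trivially on $\mathcal{T}_\Lambda$. Faithfulness of the $H$-action then forces $N = 1$, and thus $G$ acts faithfully on $\mathcal{T}_\Gamma$.

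The main obstacle in writing this out carefully is bookkeeping: one must be sure that the canonical inclusion $H \hookrightarrow G$ (which is guaranteed by the normal form theorem for fundamental groups of graphs of groups) identifies $H_e$ and $G_e$ as the same subgroup of $G$, so that the chain of inclusions $N \subseteq G_e = H_e \subseteq H_{\iota(e)}$ really lives inside $G$ in a consistent way. A minor degenerate case, where $\Lambda$ consists of a single vertex and hence has no edges, is not covered by the argument, but in that situation the faithfulness hypothesis forces $H$ to be trivial and the lemma is vacuous.
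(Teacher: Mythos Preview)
Your argument is correct and follows essentially the same route as the paper's proof. Both identify the kernel $N$ of the $G$-action as lying in every edge stabiliser of $\mathcal{T}_\Gamma$, use the hypothesis $H_e=G_e$ for $e\in E(\Lambda)$ to push $N$ into $H$, and then invoke faithfulness of the $H$-action on $\mathcal{T}_\Lambda$ to conclude $N=1$. The only difference in packaging is that the paper first constructs an explicit $H$-equivariant embedding $f\colon\mathcal{T}_\Lambda\hookrightarrow\mathcal{T}_\Gamma$ and phrases the argument as
\[
N\subseteq\bigcap_{e\in E(f(\mathcal{T}_\Lambda))}\mathrm{Stab}_G(e)=\bigcap_{e\in E(f(\mathcal{T}_\Lambda))}\mathrm{Stab}_H(e)\cong\ker\bigl(H\curvearrowright\mathcal{T}_\Lambda\bigr),
\]
whereas you bypass the tree embedding and argue directly via normality that $N\subseteq H_v$ for every $v\in V(\Lambda)$ and hence $N$ fixes each coset $hH_v$. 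One small correction: in the degenerate case where $\Lambda$ is a single vertex with no edges, the lemma is not vacuous (it still asserts something about $G$); rather, the statement can fail there, and the paper's proof has the same gap. This is harmless since all applications involve a $\Lambda$ with at least one edge.
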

\begin{proof}
     There is a canonical $H$-equivariant embedding $f \colon \mathcal{T}_{\Lambda}\hookrightarrow \mathcal{T}_{\Gamma}$ of the corresponding Bass--Serre trees defined by $h\cdot H_v\mapsto h\cdot G_v$ and $h\cdot H_e\mapsto h\cdot G_e$ for all $v\in V(\Lambda)$, $h\in E(\Lambda)$. By construction, its image is the subtree of $\mathcal{T}_{\Gamma}$ spanned by the $H$-orbits of $\bigsqcup_{v\in V(\Lambda)} G_v$ and $\bigsqcup_{e\in E(\Lambda)} G_e$.

    Since $G$ acts without inversions, the kernel of the $G$-action on $\mathcal{T}_{\Gamma}$ is equal to the intersection $\bigcap _{e\in E(\mathcal{T}_{\Gamma})} {\rm Stab}_G(e)$. The latter is a subgroup of $$\bigcap_{e\in E(f(\mathcal{T}_{\Lambda}))} {\rm Stab}_G(e) = \bigcap_{e\in E(f(\mathcal{T}_{\Lambda}))} {\rm Stab}_H(e),$$ where the last equality follows from the assumption that $H_e=G_e$ for all $e\in E(\Lambda)$. The right hand side of this equality is isomorphic to the kernel of the $H$-action on $\mathcal{T}_{\Lambda}$, which is trivial by assumption. Thus, $G$ acts faithfully on $\mathcal{T}_{\Gamma}$. This completes the proof. 
\end{proof}

We now assume that $G$ is a graph of groups that acts faithfully and cocompactly on its Bass--Serre tree $\mathcal{T}$ with quotient $\Gamma$. We further assume that $\mathcal{T}$ is locally finite and that all edge stabilisers are of type $F_n$ for $n\geq 2$. Our goal is to show that then $G$ embeds in a simple group of type $F_n$ and thus satisfies the Boone--Higman Conjecture. To achieve this, we aim to apply our results on rigid permutation groups from the previous sections. This requires us to find an admissible system of gates $\mathfrak{G}$ on $\Gamma$ together with a suitable base tree $T_0$ such that $(\mathrm{RP}_{G}(\mathcal{T}),\mathfrak{G},T_0)$ satisfies the viral expansion property. However, it is easy to see that if we start with a general graph of groups, then it can happen that no such system exists. To see this consider $\mathbb{Z}$ viewed as an HNN-extension of the trivial group and observe that its Bass--Serre tree is a bi-infinite line. The issue is that the indices of the edge groups in the vertex groups are too small. To resolve this we observe that $G$ can be embedded in a bigger graph of groups $\widehat{G}$ defined by the following data: 
\[
    \widehat{G}_v:=G_v\times \mathbb{Z}/3\mathbb{Z},~\widehat{G}_e:=G_e,~\widehat{G}_e=G_e\hookrightarrow G_v\times \left\{1\right\}\leq \widehat{G}_v \mbox{ for } v=\iota(e),\tau(e),
\]
where the inclusions are induced by the inclusions for $G$.

We will call $\widehat{G}$ the \emph{augmented graph of groups} associated with $G$ and denote by $\widehat{\mathcal{T}}$ the Bass--Serre tree of $\widehat{G}$. Observe that the vertex and edge groups of $\widehat{G}$ have the same finiteness properties as those of $G$, since they are commensurable. As discussed above, the embeddings $G_e\hookrightarrow \widehat{G}_e$ and $G_v\hookrightarrow \widehat{G}_v$ induce a canonical embedding $G\hookrightarrow \widehat{G}$ together with a $G$-equivariant embedding $\mathcal{T}\hookrightarrow \widehat{\mathcal{T}}$. 

 \begin{lem}\label{lem:strongly-faithful-action}
        If $G$ acts faithfully on $\mathcal{T}$, then $\widehat{G}$ acts strongly faithfully on $\widehat{\mathcal{T}}$.
    \end{lem}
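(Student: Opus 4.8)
The plan is to show that any non-trivial $g\in\widehat{G}$ acts non-trivially on every piece of $\widehat{\mathcal T}$, using the structure of $\widehat{\mathcal T}$ relative to the embedded copy of $\mathcal T$. First I would recall that, for each vertex $v$ of $\Gamma$, every vertex $\widetilde v\in V(\widehat{\mathcal T})$ lying over $v$ has its local structure governed by the extra $\mathbb Z/3\mathbb Z$-factor: the $[\widehat G_v:\widehat G_e]=3\cdot[G_v:G_e]$ edges emanating from $\widetilde v$ fall into $[G_v:G_e]$ groups of three, permuted by the $\mathbb Z/3\mathbb Z$-factor of $\widehat G_{\widetilde v}$. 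In particular the augmented tree $\widehat{\mathcal T}$ is obtained from (a subdivision of, or a tree containing) $\mathcal T$ by, at each vertex, "tripling" the edges, and every subtree of $\widehat{\mathcal T}$ — in particular every piece — contains vertices whose $\widehat G$-stabiliser surjects onto the $\mathbb Z/3\mathbb Z$-factor, which acts non-trivially nearby.

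Next, I would translate "acts trivially on a piece $P$" into a statement about edge stabilisers. Since $\widehat G$ acts on $\widehat{\mathcal T}$ without inversions, an element $g$ acting as the identity on $P$ lies in $\bigcap_{e\in E(P)}\mathrm{Stab}_{\widehat G}(e)$. The key observation is that for \emph{any} infinite piece $P$, the union $\bigcup_{e\in E(P)}\mathrm{Stab}_{\widehat G}(e)$ is contained in a conjugate of a vertex group $\widehat G_v=G_v\times\mathbb Z/3\mathbb Z$, but more precisely the intersection of stabilisers of all edges at a single vertex $\widetilde v$ of $P$ already kills the $\mathbb Z/3\mathbb Z$-factor (because that factor permutes the edge-triples non-trivially and an edge stabiliser $\widehat G_e$ meets $\{1\}\times\mathbb Z/3\mathbb Z$ trivially). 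Hence such a $g$ projects trivially to every $\mathbb Z/3\mathbb Z$-factor along $P$, so $g$ lies (after conjugation) in $G$ itself; here I use that $P$, being infinite and connected, contains a full interior vertex and an edge whose edge group is exactly $G_e\subset G$.

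Having reduced to $g\in G$, I would invoke the hypothesis that $G$ acts \emph{faithfully} on $\mathcal T$: it then suffices to show that $g$ fixes all of (the image of) $\mathcal T$ inside $\widehat{\mathcal T}$, or equivalently all edges of a $G$-invariant subtree, since a non-trivial element of $G$ must move some edge of $\mathcal T$. The point is that $P$, being a piece of the locally finite tree $\widehat{\mathcal T}$, has "enough" edges: every $\widehat G$-orbit of edges of $\widehat{\mathcal T}$ — and in particular every $G$-orbit of edges coming from $\mathcal T$ — is represented inside $P$ (a piece is an infinite component of the complement of a finite subgraph, and since $\widehat{\mathcal T}/\widehat G=\widehat\Gamma$ is finite, every edge type occurs at bounded distance from any vertex, hence inside $P$). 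Since $g$ fixes $P$ pointwise, and $g\in G$, $g$ then fixes a complete set of $G$-orbit representatives of edges of $\mathcal T$; conjugating these back, $g$ fixes every edge of $\mathcal T$, so $g$ acts trivially on $\mathcal T$, forcing $g=1$ by faithfulness.

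\textbf{Main obstacle.} The delicate point is the second step: carefully arguing that the intersection of $\widehat G$-stabilisers of the edges lying in an arbitrary piece $P$ forces the $\mathbb Z/3\mathbb Z$-factors to vanish, and simultaneously that $P$ contains edges from every relevant orbit. Both rely on the combinatorics of how $\widehat{\mathcal T}$ sits over $\widehat\Gamma$ (the degree formula $\sum_e[\widehat G_v:\widehat G_e]$ recalled above) and on $\widehat\Gamma$ being finite; making "every orbit appears in every piece" precise — i.e. controlling which vertices/edges survive after deleting a finite subgraph — is where the bulk of the care goes. The augmenting factor $\mathbb Z/3\mathbb Z$ (rather than $\mathbb Z/2\mathbb Z$) is presumably what guarantees enough room: with index tripled, no $\widehat G_e$ can contain a non-trivial power of the generator of the $\mathbb Z/3\mathbb Z$-factor, which is exactly what we need to conclude strong faithfulness.
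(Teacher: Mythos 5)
There is a genuine gap in your final step. From the fact that the piece $P$ contains at least one edge in every $G$-orbit, and that $g$ fixes $P$ pointwise, you conclude that ``conjugating these back, $g$ fixes every edge of $\mathcal{T}$''. This inference is invalid: fixing one representative $e$ of an orbit does not imply fixing a translate $xe$, since that would require $g\in x\,\mathrm{Stab}(e)\,x^{-1}$ for every $x$, which is exactly what you are trying to prove. The kernel of the $G$-action is the intersection of \emph{all} edge stabilisers, not of one stabiliser per orbit; for instance, in an amalgam $A\ast_C B$ with $C\neq 1$ acting faithfully on its Bass--Serre tree, any non-trivial $c\in C$ fixes a representative of the unique edge orbit without acting trivially. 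There is also a smaller unaddressed point earlier: your reduction only shows that $g$ is \emph{conjugate} into $G$ (indeed into an edge group), not that $g\in G$; this could be repaired by replacing $g$ and $P$ by conjugate/translated copies (a translate of a piece is a piece), but you do not say so, and in any case the subsequent argument collapses at the orbit-representative step.

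The paper's proof avoids this by establishing a much stronger containment: every piece $R\subset\widehat{\mathcal{T}}$ contains an entire $\widehat{G}$-translate of the canonically embedded copy of $\mathcal{T}$. Concretely, one picks a vertex $w\in R$ with $w=g\widehat{G}_v$; if $g\mathcal{T}\not\subset R$, one uses an element $h\in\mathrm{Stab}_{\widehat G}(w)=g\widehat{G}_vg^{-1}$ not lying in $gG_vg^{-1}$ (this is where the extra $\mathbb{Z}/3\mathbb{Z}$ factor is used -- note that $\mathbb{Z}/2\mathbb{Z}$ would already suffice here; the choice of $3$ is dictated by the viral expansion property, not by this lemma) to rotate $g\mathcal{T}$ off the edge pointing out of $R$, so that $hg\mathcal{T}\subset R$. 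Then the intersection of the $\widehat{G}$-stabilisers of the edges of $hg\mathcal{T}$ is the conjugate by $hg$ of the intersection of all $G$-edge stabilisers of $\mathcal{T}$, which is trivial by faithfulness, so any element acting trivially on $R$ is trivial. Your argument would become correct if you replaced ``$P$ contains a representative of every edge orbit'' by this stronger statement that $P$ contains a full translate of $\mathcal{T}$; as written, the connectivity/orbit count alone cannot yield strong faithfulness.
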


    \begin{proof}
        We identify $\mathcal{T}$ with its canonical embedding in $\widehat{\mathcal{T}}$ defined above.  We first show that every piece $R\subset \widehat{\mathcal{T}}$ contains a translate of $\mathcal{T}$. To see this, choose a vertex $w$ in $R$ and let $v$ be a vertex of the underlying finite graph such that $w$ lies in the same $\widehat{G}$-orbit as the vertex $1\cdot \widehat{G}_v$. Then there is an element $g\in \widehat{G}$ such that $g \widehat{G}_v = w$. Now if $g\mathcal{T}$ already lies inside $R$, we are done. Otherwise, let $e_1$ be the unique edge that cuts off $R$ from $\widehat{\mathcal{T}}$ and let  $e_2$ be the unique edge with one vertex $w$ and pointing towards $e_1$. Then $e_2$ has to be contained in $g\mathcal{T}$. Let $h\in g\widehat{G}_vg^{-1}= {\rm Stab}_{\widehat{G}}(w)$ be an element that is not contained in the subgroup $gG_vg^{-1}$. Then $hg\mathcal{T}$ will still contain the vertex $w$ but not the edge $e_2$, thus $hg\mathcal{T}$ must completely lie in $R$. 
        
        Now the intersection of the edge stabilisers of the $\widehat{G}$-action on $hg\mathcal{T}$ is isomorphic (via conjugation by $hg$) to the intersection of all edge stabilisers of the $G$-action on $\mathcal{T}$, which is trivial by the faithfulness of the $G$-action on $\mathcal{T}$. Thus, the restriction of the $\widehat{G}$-action on $\widehat{\mathcal{T}}$ to every piece $R$ is faithful. Hence, the action is strongly faithful.
    \end{proof}

    We define a system of gates $\mathfrak{G}$ on $\Gamma$ by choosing at least one half-edge of every $e\in E(\Gamma)$ as follows: We fix an identification $V(\Gamma)=\left\{1,\cdots, n\right\}$ and then define that for every edge $e\in E(\Gamma)$ the set $\mathfrak{G}$ contains the half-edge corresponding to the vertex of $e$ with larger integer value if the end points of $e$ are distinct. If $e$ defines a loop in $\Gamma$ we add both of its half-edges to $\mathfrak{G}$. Figure \ref{fig:system-of-gates-for-a-finite-graph} provides an example of a system of gates obtained from this procedure after identifying the vertices $v_i$ with $i$.

    \begin{lem}
        $\mathfrak{G}$ defines an admissible system of gates on $\widehat{\mathcal{T}}$.
    \end{lem}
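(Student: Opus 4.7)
The plan is to invoke the equivalent characterisation of admissibility stated in the preamble to the definition of admissible systems of gates: a system of gates on $\Gamma$ is admissible if and only if every infinite geodesic ray in $\Gamma$ is blocked by some gate. So the entire task reduces to checking this ray-blocking property for the specific system $\mathfrak{G}$ constructed just above the lemma.

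First I would unpack the construction: we fix the identification $V(\Gamma) = \{1, \ldots, n\}$ and place, for each non-loop edge $e \in E(\Gamma)$ with endpoints $u < v$, a gate on the half-edge of $e$ pointing to the higher endpoint $v$; for each loop $e$, both half-edges lie in $\mathfrak{G}$. Now let $\gamma = (v_0, e_1, v_1, e_2, v_2, \ldots)$ be an arbitrary infinite geodesic ray in $\Gamma$. I would split into two cases. If some edge $e_k$ along $\gamma$ is a loop, then by construction both half-edges of $e_k$ belong to $\mathfrak{G}$, so $\gamma$ is blocked at step $k$. Otherwise, every $e_k$ is a non-loop edge, so $v_{k-1} \ne v_k$ for all $k$, and we obtain a sequence of pairwise distinct consecutive integer labels $v_0, v_1, v_2, \ldots \in \{1, \ldots, n\}$.

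Since $\Gamma$ has only $n$ vertices, the sequence cannot be strictly ascending forever, so there must exist some smallest $k$ with $v_{k-1} > v_k$. At this step the gate on $e_k$ sits at the lower endpoint $v_k$, which blocks $\gamma$ in the sense of the admissibility criterion. In either case we conclude that $\gamma$ is blocked, whence $\mathfrak{G}$ is admissible on $\Gamma$, and since the quotient of $\widehat{\mathcal{T}}$ by $\widehat{G}$ is precisely the same finite graph $\Gamma$, this means $\mathfrak{G}$ is an admissible system of gates on $\widehat{\mathcal{T}}$.

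There is no substantial obstacle here; the main subtlety, if any, is simply being careful about the convention for which half-edge is distinguished (the one at the lower endpoint, pointing towards the higher endpoint) so that the descending step correctly produces a gated leaf in the universal cover. The finite graph pigeonhole argument then makes the conclusion immediate.
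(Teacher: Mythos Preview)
Your approach is the right one and is essentially the argument the paper sketches in Section~3 (the paper's own proof of this lemma is a minor variant, phrasing the pigeonhole step via ``some subpath forms a simple closed loop'' rather than via monotonicity of the vertex labels, and handling backtracking separately before invoking K\"onig's Lemma).

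There is, however, a convention slip. The definition of $\mathfrak{G}$ immediately preceding this lemma places the gate on the half-edge \emph{at the vertex with larger label}, not at the smaller one; your parenthetical ``the one at the lower endpoint, pointing towards the higher endpoint'' has this reversed. With the correct convention, ``blocked'' means the ray exits an edge through the half-edge at the \emph{higher} endpoint, so you should be looking for an \emph{ascending} step $v_{k-1} < v_k$ rather than a descending one. The pigeonhole argument works identically: the vertex labels lie in $\{1,\ldots,n\}$, so the sequence cannot be strictly descending forever, and the first ascending step (or the first loop edge) furnishes the required gate. This is a one-line fix and does not affect the structure of your argument.
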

    \begin{proof}
        Our choice of gates guarantees that every edge path of length at least $\max\{n,2\}$ in $\Gamma$ will exit some edge through a half-edge in $\mathfrak{G}$. Indeed, this is clear if the path backtracks at some point. Moreover, our assumption on the length guarantees that if it does not backtrack, then a subpath will define a loop in $\Gamma$ and our choice of gates guarantees that every simple closed loop in $\Gamma$ exits at least one edge through a half-edge in $\mathfrak{G}$. Thus, every simple path of length at least $\max\left\{n,2\right\}$  in $\widehat{T}$ will exit some edge through a half-edge labelled by an element of $\mathfrak{G}$. Hence, K\"onig's Lemma \cite{Koe-27} implies that $\mathfrak{G}$ defines an admissible system of gates on $\widehat{\mathcal{T}}$.
    \end{proof}

    \begin{lem}\label{lem:viral-exp-for-augmented-graph-of-groups}
        For every finite subset $\mathcal{B}\subset \widehat{\mathcal{T}}$ there is an admissible subtree $T_0$ such that the triple $(\mathrm{RP}_{\widehat{G}}(\widehat{\mathcal{T}},\mathcal{B}), \mathfrak{G}, T_0)$ has the viral expansion property.
    \end{lem}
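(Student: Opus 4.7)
The plan is to verify both conditions of the viral expansion property: $M_{\nu\nu}\geq 3$ and $L_\nu(T_0)\geq 2$ for every $\nu\in\mathfrak{G}$. The decisive input is that the augmentation $\widehat{G}_v=G_v\times\mathbb{Z}/3\mathbb{Z}$, $\widehat{G}_e=G_e$ triples every edge-vertex index, so $[\widehat{G}_v:\widehat{G}_e]=3[G_v:G_e]\geq 3$ for every edge $e$ incident to a vertex $v$ in $\Gamma$. Once $M_{\nu\nu}\geq 3$ is in hand, a suitable $T_0$ containing $\mathcal{B}$ in its interior will be produced using the admissibility of $\mathfrak{G}$ together with Remark \ref{rem:viral-expansion-property}.

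I would establish $M_{\nu\nu}\geq 3$ by a case distinction on whether the edge $e_\nu$ underlying $\nu=(e_\nu,v_\nu)$ is a loop. Suppose first that $e_\nu$ is a loop at $v_\nu$; then both of its half-edges $\nu,\nu'$ lie in $\mathfrak{G}$ by our gate choice, and every lift of $e_\nu$ to $\widehat{\mathcal{T}}$ has one endpoint carrying half-edge $\nu$ and the other $\nu'$. Hence the $3[G_{v_\nu}:G_{e_\nu}]\geq 3$ lifts of $e_\nu$ incident to the root $w_\nu$ whose half-edge at $w_\nu$ equals $\nu'$ are all non-parent and each ends at an admissible terminal leaf of type $\nu$, giving $M_{\nu\nu}\geq 3$. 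Suppose instead that $e_\nu$ is non-loop; then our gate rule places $\nu$ at the larger endpoint $v_\nu$ and leaves the opposite half-edge $(e_\nu,u)$ at the other vertex $u$ out of $\mathfrak{G}$, forcing $S_\nu$ to expand at least one further layer at the non-admissible leaves over $u$. The $3[G_{v_\nu}:G_{e_\nu}]-1\geq 2$ non-parent lifts of $e_\nu$ at $w_\nu$ reach such non-admissible leaves, and expanding each of them yields $3[G_u:G_{e_\nu}]-1\geq 2$ lifts of $e_\nu$ returning to admissible terminal leaves of type $\nu$; multiplying, $M_{\nu\nu}\geq(3[G_{v_\nu}:G_{e_\nu}]-1)(3[G_u:G_{e_\nu}]-1)\geq 4$.

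With $M_{\nu\nu}\geq 3$ secured, I would construct $T_0$ as follows: starting from any finite subtree of $\widehat{\mathcal{T}}$ containing $\mathcal{B}$, I first enlarge it so that all vertices of $\mathcal{B}$ become interior, and then further extend it to an admissible tree $T_0'$ using the admissibility of $\mathfrak{G}$ (proved in the previous lemma via K\"onig's lemma). Applying the procedure of Remark \ref{rem:viral-expansion-property}---possibly discarding from $\mathfrak{G}$ any gates that never appear as leaves of admissible trees, and then performing additional leaf expansions, which only create new leaves and preserve existing interior vertices---yields an admissible $T_0$ with $\mathcal{B}\subset\mathrm{int}(T_0)$ and $L_\nu(T_0)\geq 2$ for every remaining gate $\nu$. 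I expect the main technical obstacle to be the non-loop case of the $M_{\nu\nu}$ estimate: one must see that the caret $S_\nu$ is forced to re-traverse $e_\nu$ back to a lift of $v_\nu$ because $(e_\nu,u)\notin\mathfrak{G}$, and that the augmentation by a group of order exactly $3$ (rather than merely $\geq 2$) is precisely what supplies enough multiplicity in both directions to make the product of non-parent counts at least $3$.
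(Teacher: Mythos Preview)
Your argument is correct and follows essentially the same route as the paper: both verify $M_{\nu\nu}\geq 3$ by a loop/non-loop case split exploiting $[\widehat{G}_v:\widehat{G}_e]\geq 3$, and then invoke Remark~\ref{rem:viral-expansion-property} to produce a sufficiently large $T_0$. The only minor divergence is that the paper observes the chosen gate system is already minimal (every $\nu\in\mathfrak{G}$ arises as a leaf type of some admissible tree), so no discarding step is needed, whereas you allow for it; this does not affect the validity of your proof.
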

    \begin{proof}
                \begin{figure}[h]
        \centering
        \includegraphics[width=.6\linewidth]{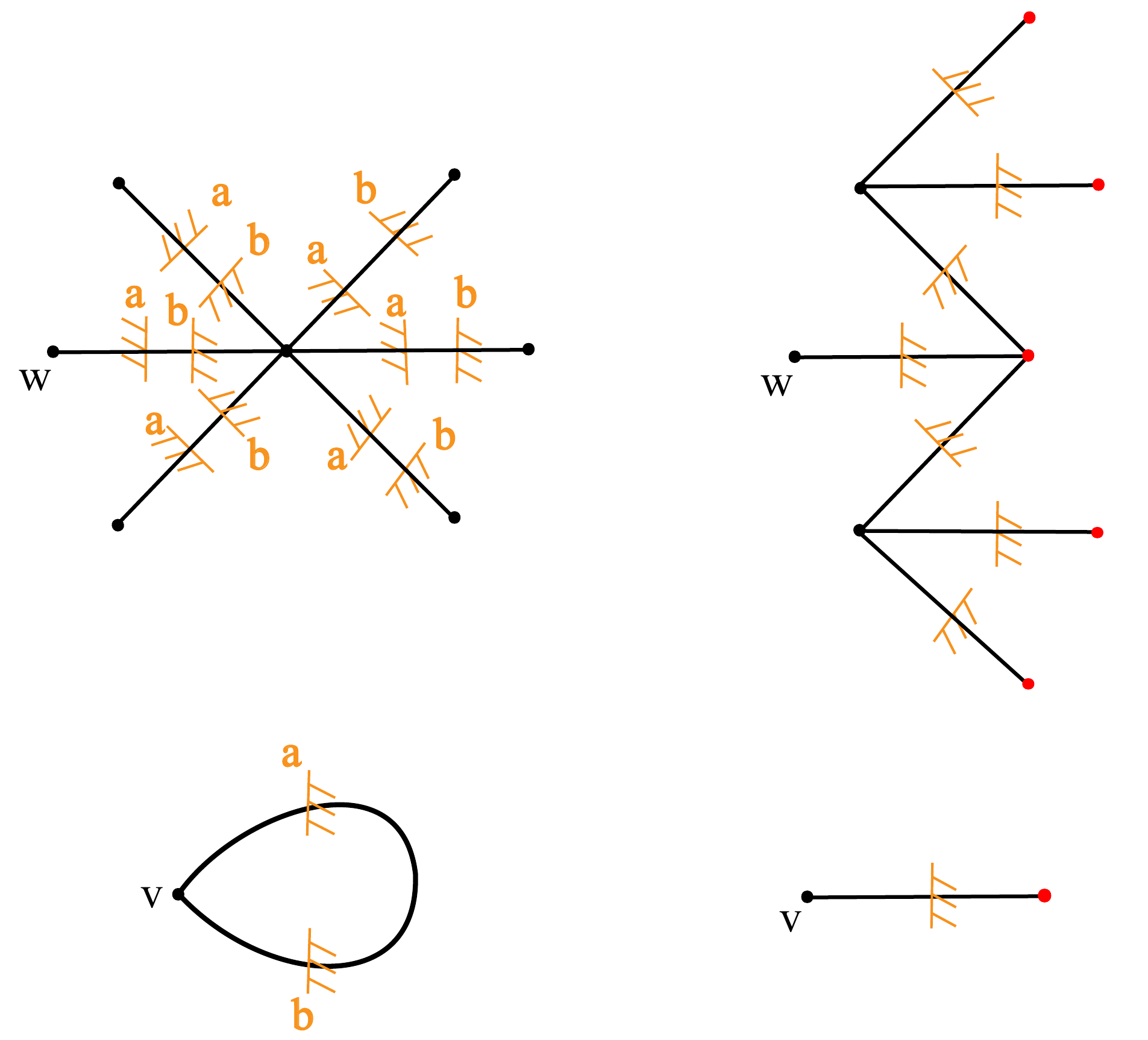}
        \caption{Examples of carets in an augmented graph of groups}
        \label{fig:example-carets}
    \end{figure}
        By hypothesis, we have $[\widehat{G}_{\iota(e)}\colon \widehat{G}_e] \geq 3$ and $[\widehat{G}_{\tau(e)}\colon \widehat{G}_e]\geq 3$ for each edge $e\in \Gamma$. Thus, for every $v\in V(\Gamma)$, 
        every preimage $w$ of $v$ in $\widehat{\mathcal{T}}$ and every edge $e\in E(\Gamma)$ with $\iota(e)=v$ (resp. $\tau(e)=v$), the half-edge of $e$ corresponding to the inclusion $G_{e}\hookrightarrow G_{\iota(e)}$ (resp. $G_{e}\hookrightarrow G_{\tau(e)}$) has at least $3$ preimages in $\widehat{\mathcal{T}}$ adjacent to $w$. This implies that if $S_i$ is a caret corresponding to an edge which does (resp. does not) define a loop in $\Gamma$, then $S_i$ has at least three (resp. four) terminal leaves of type $i$.
        Indeed, they are obtained by lifting to $\widehat{\mathcal{T}}$ the path in $\Gamma$ that traverses the loop twice starting through the half edge not labelled by $i$ (resp. backtracks along the edge with half edge labelled by $i$). See Figure \ref{fig:example-carets} for a picture illustrating the two cases.  Thus, $M_{ii}\geq 3$ for all $i$ and by Remark \ref{rem:viral-expansion-property} a sufficiently large choice of admissible base tree $T_0$ provides us with a triple $(\mathrm{RP}_{\widehat{G}}(\widehat{\mathcal{T}},\mathcal{B}), \mathfrak{G}, T_0)$ which has the viral expansion property; note that our choice of gates was minimal, meaning that we do not have to pass to a subset of $\mathfrak{G}$ when applying Remark \ref{rem:viral-expansion-property}.
    \end{proof}

    As a consequence we obtain the following result, which is the main application of our techniques.

    \begin{thm}\label{thm:main-graphs-of-groups}
        Let $G$ be a graph of groups such that all vertex groups are of type $F_n$ and all edge groups have finite index in the vertex groups. Assume that $G$ acts faithfully on its Bass--Serre tree. Then $G$ embeds in a simple group of type $F_n$. 
    \end{thm}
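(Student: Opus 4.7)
The plan is to reduce the statement to Theorem \ref{thm:main-strongly-faithful-actions-on-trees} by passing to the augmented graph of groups $\widehat{G}$ constructed just before the statement. Since the vertex groups $\widehat{G}_v = G_v \times \mathbb{Z}/3\mathbb{Z}$ are abstractly commensurable with the original $G_v$, and the edge groups $\widehat{G}_e = G_e$ are unchanged, all of $\widehat{G}$'s edge and vertex groups remain of type $F_n$. Moreover, by the discussion in the excerpt, the natural inclusion of $G$ into $\widehat{G}$ induces a $G$-equivariant embedding $\mathcal{T} \hookrightarrow \widehat{\mathcal{T}}$ of Bass--Serre trees, so it suffices to embed $\widehat{G}$ into a simple group of type $F_n$.

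Next I would verify the hypotheses of Theorem \ref{thm:main-strongly-faithful-actions-on-trees} for the $\widehat{G}$-action on $\widehat{\mathcal{T}}$. The tree $\widehat{\mathcal{T}}$ is locally finite (finite index edge inclusions), has no leaves (since $[\widehat{G}_v \colon \widehat{G}_e] \geq 3 > 1$ for every incident pair), and the action is without inversions by construction. Lemma \ref{lem:strongly-faithful-action} upgrades the faithful $G$-action on $\mathcal{T}$ to a strongly faithful $\widehat{G}$-action on $\widehat{\mathcal{T}}$. For the remaining condition, one uses the admissible system of gates $\mathfrak{G}$ on the finite quotient graph defined via the vertex ordering, together with Lemma \ref{lem:viral-exp-for-augmented-graph-of-groups}, which provides an admissible base tree $T_0$ such that $(\mathrm{RP}_{\widehat{G}}(\widehat{\mathcal{T}}),\mathfrak{G},T_0)$ has the viral expansion property.

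With all hypotheses in place, Theorem \ref{thm:main-strongly-faithful-actions-on-trees} yields that $\mathrm{RP}_{\widehat{G}}(\widehat{\mathcal{T}})$ embeds in a simple group $\Sigma$ of type $F_n$. Since the $\widehat{G}$-action on $\widehat{\mathcal{T}}$ is in particular faithful, Remark (2) after the definition of $\mathrm{RP}$ (or Lemma \ref{lem:RP-is-oligomorphic}) gives an embedding $\widehat{G} \hookrightarrow \mathrm{RP}_{\widehat{G}}(\widehat{\mathcal{T}})$. Composing the embeddings
\[
G \hookrightarrow \widehat{G} \hookrightarrow \mathrm{RP}_{\widehat{G}}(\widehat{\mathcal{T}}) \hookrightarrow \Sigma
\]
produces the desired embedding of $G$ into a simple group of type $F_n$.

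The main obstacle is not a new argument at this point but rather checking that everything has been set up consistently: one must make sure that the augmentation by $\mathbb{Z}/3\mathbb{Z}$ does not disturb the finiteness properties of the edge and vertex stabilisers of the induced action on $\widehat{\mathcal{T}}$ (they are commensurable with the original ones, hence still $F_n$), and that the system of gates used in Lemma \ref{lem:viral-exp-for-augmented-graph-of-groups} is genuinely admissible so that the viral expansion property applies. Once these bookkeeping points are verified, the theorem follows directly by assembling the pieces.
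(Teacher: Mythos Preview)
Your proposal is correct and follows essentially the same route as the paper: pass to the augmented graph of groups $\widehat{G}$, invoke Lemma~\ref{lem:strongly-faithful-action} and Lemma~\ref{lem:viral-exp-for-augmented-graph-of-groups} to verify the hypotheses of Theorem~\ref{thm:main-strongly-faithful-actions-on-trees}, and then compose the resulting embeddings. The only minor sharpening is that the edge stabilisers for the $\widehat{G}$-action on $\widehat{\mathcal{T}}$ are in fact \emph{isomorphic} (not merely commensurable) to the original edge groups $G_e$, since $\widehat{G}_e = G_e$ by construction.
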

    \begin{proof}
        This is an immediate consequence of Theorem \ref{thm:main-strongly-faithful-actions-on-trees}, Lemma \ref{lem:strongly-faithful-action}, Lemma \ref{lem:viral-exp-for-augmented-graph-of-groups} and the fact that the edge stabilisers of the $\widehat{G}$-action on $\widehat{\mathcal{T}}$ are isomorphic to the edge stabilisers of the $G$-action on $\mathcal{T}$, thus $F_n$.
    \end{proof}

    \section{Generalised Baumslag--Solitar groups}
    In this section for every finitely presented group $G$ we will introduce a very general class of graphs of groups $\BSol_G$ to which our methods can be applied, which we will call \emph{generalised Baumslag--Solitar groups over $G$}. We will then prove that if for a given $G$ there is at least one group in $\BSol_G$ that acts faithfully on its associated Bass--Serre tree, then all groups in $\BSol_G$ satisfy the Boone--Higman Conjecture. 
    
    The notion of a \emph{generalised Baumslag--Solitar group} was coined by Forester for the class of (finite) graphs of groups with infinite cyclic edge groups \cite{For-03}, although the geometry of this class of groups was already studied earlier, see \cite{Whyte01}. This notion has since been generalised by Button \cite{But-22} to the classes of (finite) graphs of groups with vertex and edge groups isomorphic to $\BZ^n$ such that all edge inclusions have finite index; he calls such a group a \emph{generalised Baumslag--Solitar groups of rank $n\geq 1$}.

    Here we introduce a further generalisation. Given a group $G$ we denote by $\BSol_G$ the class of groups consisting of all (finite) graphs of groups with the property that all edge and vertex groups are abstractly commensurable with $G$ and all edge group inclusions in vertex groups have finite index. We call the groups in $\BSol_G$ \emph{generalised Baumslag--Solitar groups over $G$}. For every element of $\BSol_G$ we fix a defining decomposition as graph of groups. We can now state and prove the main result of this section, before giving several applications in Section \ref{sec:applications}.
    \begin{thm}\label{thm:main-BS}
        Let $G$ be a group of type $F_n$ for $n\geq 2$. Assume that there is a non-trivial group $H\in \BSol_{G}$ such that $H$ acts faithfully on its Bass--Serre tree. Then every $K\in \BSol_{G}$ embeds in a simple group of type $F_n$. In particular, $K$ satisfies the Boone--Higman Conjecture. 
    \end{thm}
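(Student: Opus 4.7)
The plan is to reduce Theorem \ref{thm:main-BS} to Theorem \ref{thm:main-graphs-of-groups} by enlarging $K$ to an auxiliary finite graph of groups $L\in\BSol_G$ that contains $K$ as a subgroup and acts faithfully on its own Bass--Serre tree. The group $L$ will be constructed by gluing $K$ to the faithful witness $H$ along a common finite-index subgroup of two chosen vertex groups.

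More precisely, I would fix a vertex $v$ of the underlying graph of $K$ and a vertex $w$ of the underlying graph of $H$. Since both vertex groups $K_v$ and $H_w$ are abstractly commensurable with $G$, one can find a group $A$ together with injective homomorphisms $\iota_K\colon A\hookrightarrow K_v$ and $\iota_H\colon A\hookrightarrow H_w$ whose images have finite index: concretely, pick finite index subgroups of $K_v$ and of $H_w$ isomorphic to finite index subgroups of $G$, intersect the two copies inside $G$, and pull back. Then I would define $L$ as the graph of groups obtained from the disjoint union of the defining decompositions of $K$ and $H$ by adjoining a single new edge between $v$ and $w$ with edge group $A$ and edge inclusions $\iota_K$, $\iota_H$. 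By construction every vertex and edge group of $L$ is commensurable with $G$ and every edge inclusion has finite index, so $L\in\BSol_G$; in particular the vertex groups of $L$ inherit type $F_n$ from commensurability with $G$, because $F_n$ is preserved under commensurability.

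Both $K$ and $H$ then appear as fundamental groups of subgraphs of groups of $L$ (obtained by deleting the new edge), which yields canonical embeddings $K\hookrightarrow L$ and $H\hookrightarrow L$. The subgraph of $L$ realising $H$ retains $H$'s original edge groups, so the hypothesis of Lemma \ref{lem:subgraphs-of-groups} is satisfied, and the faithfulness of the $H$-action on its Bass--Serre tree lifts to faithfulness of the $L$-action on its own Bass--Serre tree. Theorem \ref{thm:main-graphs-of-groups} then embeds $L$, and hence $K$, into a simple group of type $F_n$. For the ``in particular'' statement, $K$ is finitely presented since it is a finite graph of finitely presented groups (as $F_n$ with $n\geq 2$ implies finite presentation), so the embedding just produced witnesses the Boone--Higman Conjecture for $K$.

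The only point requiring real care is the construction of the amalgamating subgroup $A$ with finite-index image simultaneously in $K_v$ and in $H_w$; this is exactly what the commensurability hypothesis built into the definition of $\BSol_G$ supplies, via a short intersection argument inside $G$, and it is the only place where that hypothesis is used in an essential way. Everything else is a clean application of the structural machinery already established in Sections~1--9, and I do not anticipate any further obstacles.
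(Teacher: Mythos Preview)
Your proposal is correct and follows essentially the same approach as the paper: glue the defining decompositions of $K$ and $H$ along a single new edge whose edge group is a common finite-index subgroup of chosen vertex groups, invoke Lemma~\ref{lem:subgraphs-of-groups} on the $H$-subgraph to obtain faithfulness of the enlarged group $L$ on its Bass--Serre tree, and then apply Theorem~\ref{thm:main-graphs-of-groups}. The only cosmetic difference is that the paper appeals directly to transitivity of commensurability to produce the amalgamating subgroup, whereas you spell out the intersection argument inside $G$.
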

    \begin{proof}
        Denote by $\mathcal{H}_{\Gamma}$ and $\mathcal{K}_{\Lambda}$ the graphs of groups for $H$ and $K$. Let $v\in V(\Gamma)$ and $w\in V(\Lambda)$ be arbitrary vertices. Since commensurability is an equivalence relation on groups, there are finite index subgroups $H_{0}\leq H_v$ and $K_{0}\leq K_w$ with $H_0\cong K_0$. Let $\Delta$ be the graph obtained by gluing $\Gamma$ and $\Lambda$ via a new edge $e_{vw}$ that connects $v$ to $w$, that is, $V(\Delta)=V(\Gamma)\sqcup V(\Lambda)$, $E(\Delta)=E(\Gamma)\sqcup E(\Lambda) \sqcup \left\{e_{vw}\right\}$, and $\iota(e_{vw})=v,~ \tau(e_{vw})=w$. We equip $\Delta$ with a graph of groups structure $\mathcal{L}_{\Delta}$ by equipping the subgraph $\Gamma$ (resp. $\Lambda)$ with the graph of groups structure $\mathcal{H}_{\Gamma}$ (resp. $\mathcal{K}_{\Lambda}$) and labelling the edge $e_{vw}$ by the group $H_0\cong K_0$ together with the canonical inclusions $H_0\hookrightarrow H_v$ and $K_0\hookrightarrow K_w$. By construction $L=\pi_1(\mathcal{L}_{\Delta})\in \BSol_G$. Moreover, $\mathcal{H}_{\Gamma}\leq \mathcal{L}_{\Delta}$ is a subgraph of groups whose fundamental group acts faithfully on its associated Bass--Serre tree. Thus, it follows from Lemma \ref{lem:subgraphs-of-groups} that $L$ acts faithfully on its Bass--Serre tree. Since all vertex and edge groups are commensurable to $G$ and thus of type $F_n$, Theorem \ref{thm:main-graphs-of-groups} implies that $L$ embeds in a simple group of type $F_n$. Since $n\geq 2$, and $K$ is a subgroup of $L$, this completes the proof.
    \end{proof}

    \section{Applications to explicit examples}\label{sec:applications}
    Theorem \ref{thm:main-BS} reduces the Boone--Higman Conjecture for generalised Baumslag--Solitar groups over a given finitely presented group $G$ to finding a single non-trivial group $H\in \BSol_G$ which acts faithfully on its associated Bass--Serre tree. Here we show that such groups can be found in many interesting cases, including when $G$ is one of the following groups: $\BZ^n$, a nonabelian free group $\mathbb{F}_n$, and many nilpotent groups (such as the Heisenberg group). This allows us to show the Boone--Higman Conjecture for many interesting examples for which it was still open, including Baumslag--Solitar groups, Leary--Minasyan groups and Free-by-cyclic groups (see \cite[Problem 5.3]{BBMZ-survey-23}).  

    \subsection{Torsion-free strongly scale-invariant groups} 
    A group $G$ is called \emph{strongly scale-invariant} if there is an injective endomorphism $f: G\to G$ such that $\bigcap _{i>0} f^i(G)$ is finite. As a consequence of Theorem \ref{thm:main-BS} we can prove the following result.

    \begin{thm}\label{thm:scale-invariant}
        Let $G$ be a torsion-free strongly scale-invariant group of type $F_n$ for $n\geq 2$. Then every group in $\BSol_G$ embeds in a simple group of type $F_n$. In particular, every group in $\BSol_G$ satisfies the Boone--Higman Conjecture.
    \end{thm}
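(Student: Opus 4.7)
The plan is to produce a single nontrivial group $H\in\BSol_G$ that acts faithfully on its Bass--Serre tree, and then invoke Theorem \ref{thm:main-BS}. The natural candidate is the ascending HNN extension along the contracting endomorphism $f$. Concretely, let $f:G\to G$ be an injective endomorphism with $\bigcap_{i>0}f^i(G)$ finite; since $G$ is torsion-free, this intersection is trivial. Form
\[
    H := \langle\, G,\ t \mid tgt^{-1}=f(g)\ \text{for all}\ g\in G\,\rangle,
\]
viewed as a graph of groups with one vertex labeled by $G$ and one loop edge labeled by $G$, included into the vertex group via $\mathrm{id}$ and via $f$. Because $f(G)$ has finite index in $G$ (the relevant consequence of strong scale-invariance), both edge-to-vertex inclusions have finite index and all edge/vertex groups are commensurable with $G$, so $H\in\BSol_G$.

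Next, I would check that $H$ acts faithfully on its Bass--Serre tree $\mathcal{T}$. The kernel $N$ of the $H$-action is a normal subgroup of $H$ contained in the base vertex stabilizer $G$. The defining relation gives $tGt^{-1}=f(G)\subseteq G$, and iterating yields $t^{n}Gt^{-n}=f^{n}(G)$ for every $n\ge 0$. Since $N$ is normal in $H$ and contained in $G$, it satisfies $N=t^{n}Nt^{-n}\subseteq t^{n}Gt^{-n}=f^{n}(G)$ for all $n\ge 0$, whence
\[
    N\ \subseteq\ \bigcap_{n\ge 0}f^{n}(G)\ =\ \{1\}.
\]
Thus the $H$-action on $\mathcal{T}$ is faithful.

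With $H\in\BSol_G$ acting faithfully on its Bass--Serre tree, Theorem \ref{thm:main-BS} immediately gives that every $K\in\BSol_G$ embeds in a simple group of type $F_n$. Since $n\ge 2$, this simple group is in particular finitely presented, so every $K\in\BSol_G$ satisfies the Boone--Higman Conjecture.

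The main obstacle is purely in producing and certifying the element $H\in\BSol_G$: one needs the endomorphism $f$ to simultaneously have finite-index image (so that $H$ lies in $\BSol_G$) and a trivial iterated image (so that the Bass--Serre action is faithful). Both ingredients come directly from torsion-freeness plus the defining property of strong scale-invariance, and after that the hard work is entirely hidden inside the already-proved Theorem \ref{thm:main-BS}.
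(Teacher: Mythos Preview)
Your proof is correct and essentially identical to the paper's: construct the ascending HNN extension $H$ along $f$, show the kernel of the Bass--Serre action lies in $\bigcap_{i>0} t^i G t^{-i} = \bigcap_{i>0} f^i(G) = \{1\}$, and apply Theorem~\ref{thm:main-BS}. One terminological nitpick: $[G:f(G)]<\infty$ is part of the standard definition of strongly scale-invariant (see e.g.\ \cite{Der-22}) rather than a consequence of it; the paper's stated definition omits this clause but, like you, uses it implicitly.
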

    \begin{proof}
        Let $f: G\to G$ be an injective homomorphism such that $\bigcap _{i>0} f^i(G)$ is finite and thus trivial, since $G$ is torsion-free. Let $H= \left\langle G, t\mid tgt^{-1}=f(g), ~ g \in G\right\rangle$ be the HNN-extension of $G$ defined by $f$. We claim that the action of $H$ on the associated Bass--Serre tree is faithful. Indeed, any element of $h\in H$ that acts trivially, in particular has to stabilise all edges. Thus, $h$ has to be contained in the intersection 
        \[
            \bigcap_{i>0} t^i G t^{-i} = \bigcap_{i>0} f^i(G)=\left\{1\right\}
        \] 
        of edge stabilisers, which is trivial. The assertion then follows from Theorem \ref{thm:main-BS}. 
    \end{proof}

    A conjecture attributed to Nekrashevych and Pete \cite{NekPet-11, Der-22} asserts that every finitely generated strongly scale-invariant group is virtually nilpotent. Conversely, it is known that many (but not all) finitely generated nilpotent groups are strongly scale-invariant. In the next sections we will discuss some concrete examples that allow us to deduce the Boone--Higman Conjecture for several classes of groups of interest in geometric group theory; we refer to \cite{Der-22} for further details, examples and references regarding strongly scale-invariant groups.

    \subsection{Baumslag--Solitar groups}

    For all integers $n,m\in \mathbb{Z}$, the Baumslag--Solitar group $BS(n,m)$ is the group defined by the finite presentation
    \[
        BS(n,m):= \left\langle a, b \mid ba^nb^{-1}=a ^m \right\rangle.
    \]
    
    If $n,m\neq 0$, it is the HNN-extension with vertex and edge group $\mathbb{Z}$, where we identify the subgroup $n\mathbb{Z}\leq \mathbb{Z}$ with the subgroup $m\mathbb{Z}\leq \mathbb{Z}$. In particular, $BS(n,m)\in \BSol_{\mathbb{Z}}$ if $n,m\neq 0$.

    \begin{thm}\label{thm:gen-BS}
        Every group in $\BSol_{\mathbb{Z}}$ embeds in a simple group of type $F_{\infty}$. In particular, the Boone--Higman Conjecture holds for $BS(n,m)$ for all $n,m\in \mathbb{Z}$.
    \end{thm}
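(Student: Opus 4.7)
The plan is to derive this theorem essentially as a direct specialisation of Theorem \ref{thm:scale-invariant}, with an auxiliary appeal to the hyperbolic case of the Boone--Higman conjecture to cover the degenerate presentations where one of the exponents vanishes.

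First I will verify the hypotheses of Theorem \ref{thm:scale-invariant} for $G = \mathbb{Z}$. The group $\mathbb{Z}$ is torsion-free and has type $F_\infty$ (the circle is a classifying space). Strong scale-invariance is witnessed by the multiplication-by-$2$ endomorphism $f \colon \mathbb{Z} \to \mathbb{Z}$, which is injective and satisfies $\bigcap_{i\geq 0} f^i(\mathbb{Z}) = \bigcap_{i\geq 0} 2^i\mathbb{Z} = \{0\}$. Applying Theorem \ref{thm:scale-invariant} then yields the first assertion of the theorem: every group in $\BSol_\mathbb{Z}$ embeds in a simple group of type $F_\infty$.

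For the classical Baumslag--Solitar groups I will split on whether the exponents vanish. When $n, m \neq 0$, the standard presentation $BS(n,m) = \langle a, b \mid b a^n b^{-1} = a^m \rangle$ exhibits $BS(n,m)$ as the HNN extension of $\langle a \rangle \cong \mathbb{Z}$ along the finite-index subgroups $n\mathbb{Z}$ and $m\mathbb{Z}$ of $\mathbb{Z}$. Hence $BS(n,m) \in \BSol_\mathbb{Z}$, and the first assertion already supplies the required embedding into a simple group of type $F_\infty$, which in particular implies solvable word problem and the Boone--Higman conjecture.

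The degenerate cases $n = 0$ or $m = 0$ fall outside $\BSol_\mathbb{Z}$ because the corresponding edge inclusion fails to have finite index. However, in each such case the defining relator collapses: $BS(0,0) \cong F_2$, while $BS(0,m) \cong BS(m,0) \cong \mathbb{Z} \ast (\mathbb{Z}/|m|\mathbb{Z})$ for $m \neq 0$. All of these groups are virtually free, hence word hyperbolic, and so the Boone--Higman conjecture in these cases is already covered by the theorem of Belk, Bleak, Matucci and Zaremsky for hyperbolic groups \cite{BBMZ-hyperbolic-23}. No substantial obstacle arises: the only mild subtlety is that the degenerate parameter values do not sit inside the $\BSol_\mathbb{Z}$ framework directly, forcing a short detour through the hyperbolic case.
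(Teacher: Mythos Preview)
Your proof is correct and follows essentially the same approach as the paper: verify that $\mathbb{Z}$ is torsion-free, of type $F_\infty$, and strongly scale-invariant via the doubling map, then apply Theorem~\ref{thm:scale-invariant}. The only minor difference is in handling the degenerate cases $n=0$ or $m=0$: the paper observes that these groups are virtually free and therefore embed in Thompson's group $V$ (which is simple and of type $F_\infty$), whereas you invoke the hyperbolic case of Boone--Higman from \cite{BBMZ-hyperbolic-23}; both are valid, though the embedding into $V$ is more elementary.
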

    \begin{proof}
        If $n=0$ or $m=0$, $BS(n,m)$ is virtually free and hence embeds in Thompson's group V which is type $F_\infty$ \cite{Bro-87} and simple. If $n,m\neq 0$, $BS(n,m)\in \BSol_{\mathbb{Z}}$ and, by Theorem \ref{thm:scale-invariant}, it suffices to show that $\mathbb{Z}$ is strongly scale-invariant. This follows by observing that the injective endomorphism $f: \mathbb{Z}\to \mathbb{Z}$, $n\mapsto 2n$ satisfies
        \[
            \bigcap_{i>0} f^i(\mathbb{Z}) = \bigcap_{i>0} 2^i \mathbb{Z} = \left\{0\right\}.
        \]
    \end{proof}

    Theorem \ref{thm:gen-BS} proves that in particular the Boone--Higman Conjecture holds for all generalised Baumslag--Solitar groups, that is, finite graphs of groups all of whose edge and vertex groups are infinite cyclic. We also emphasize that Baumslag--Solitar groups are one of the classes of groups raised explicitly in \cite{BBMZ-survey-23} as groups for which it would be interesting to prove the Boone--Higman Conjecture.

    \subsection{Leary--Minasyan groups}

    In \cite{But-22} Button introduces the notion of a \emph{generalised Baumslag--Solitar group of rank $k$} as a finite graph of groups all of whose vertex and edge groups are isomorphic to $\mathbb{Z}^k$. A key motivation for his generalisation is that this class includes the Leary--Minasyan groups \cite{LeaMin-21}, which included the first examples of ${\rm CAT}(0)$, but not virtually biautomatic groups. The generalised Baumslag--Solitar groups of rank $k$ are contained in $\BSol_{\mathbb{Z}^k}$. An analogous proof as the one of Theorem \ref{thm:gen-BS} shows:
    \begin{thm}\label{thm:gen-BS-all-ranks}
        For all $k\geq 1$ every group in $\BSol_{\mathbb{Z}^k}$ embeds in a simple group of type $F_{\infty}$.
    \end{thm}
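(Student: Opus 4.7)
The plan is to deduce this statement as a direct application of Theorem~\ref{thm:scale-invariant} with $G = \mathbb{Z}^k$, following exactly the same template as the proof of Theorem~\ref{thm:gen-BS}. Thus the only things to verify are that $\mathbb{Z}^k$ is torsion-free, of type $F_\infty$, and strongly scale-invariant.

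Torsion-freeness is immediate from the definition of $\mathbb{Z}^k$. For the finiteness property, $\mathbb{Z}^k$ is of type $F_\infty$ because the $k$-torus $\mathbb{T}^k = (\mathbb{R}/\mathbb{Z})^k$ is a compact classifying space for it (alternatively, one may appeal to the fact that the class of groups of type $F_\infty$ is closed under finite direct products, starting from $\mathbb{Z}$ being of type $F_\infty$). For strong scale-invariance, I would take the injective endomorphism $f \colon \mathbb{Z}^k \to \mathbb{Z}^k$ given by $v \mapsto 2v$ (or any integer $d\geq 2$ in place of $2$), and observe that
\[
\bigcap_{i>0} f^i(\mathbb{Z}^k) \;=\; \bigcap_{i>0} 2^i \mathbb{Z}^k \;=\; \{0\},
\]
so in particular this intersection is finite.

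With these three ingredients in hand, Theorem~\ref{thm:scale-invariant} applies with $n = \infty$ and yields that every group in $\BSol_{\mathbb{Z}^k}$ embeds in a simple group of type $F_\infty$. There is no real obstacle: the heavy lifting was done in Theorems~\ref{thm:main-BS} and~\ref{thm:scale-invariant}, and $\mathbb{Z}^k$ trivially satisfies the hypotheses needed for the latter. The only subtlety worth flagging is that the HNN-extension constructed inside the proof of Theorem~\ref{thm:scale-invariant} using $f(v) = 2v$ is precisely a generalised Baumslag--Solitar group of rank $k$ in Button's sense, confirming that the class $\BSol_{\mathbb{Z}^k}$ indeed contains a witness group acting faithfully on its Bass--Serre tree, so that the hypothesis of Theorem~\ref{thm:main-BS} is met.
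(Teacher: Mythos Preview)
Your proof is correct and follows essentially the same approach as the paper: both reduce to Theorem~\ref{thm:scale-invariant} and verify that $\mathbb{Z}^k$ is strongly scale-invariant via the doubling map $v\mapsto 2v$ (the paper phrases this as the endomorphism onto $(2\mathbb{Z})^k$). The extra remarks you include about torsion-freeness, type $F_\infty$, and the HNN-extension being a rank-$k$ generalised Baumslag--Solitar group are all accurate and simply make explicit what the paper leaves implicit.
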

    \begin{proof}
        By Theorem \ref{thm:scale-invariant} it suffices to prove that $\mathbb{Z}^k$ is strongly scale-invariant. This follows as in the proof of Theorem \ref{thm:gen-BS} by considering the injective endomorphism that maps $\mathbb{Z}^k$ to the subgroup $(2\mathbb{Z})^k\leq \mathbb{Z}^k$.
    \end{proof}
    
    As a consequence we obtain the following result.

    \begin{cor}\label{cor:Leary-Minasyan-groups}
        All Leary--Minasyan groups constructed in \cite{LeaMin-21} satisfy the Boone--Higman Conjecture. In particular, there is a ${\rm CAT}(0)$ group that is not virtually biautomatic and satisfies the Boone--Higman Conjecture.
    \end{cor}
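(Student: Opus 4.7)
The plan is to verify that the Leary--Minasyan groups fit into the framework of Theorem~\ref{thm:gen-BS-all-ranks} and then invoke that theorem. Recall from \cite{LeaMin-21} that each Leary--Minasyan group is constructed as a finite graph of groups (in fact, as an HNN-extension) in which every vertex group and every edge group is isomorphic to $\BZ^k$ for some $k \geq 1$, with edge inclusions of finite index. By definition, this places every Leary--Minasyan group inside the class $\BSol_{\BZ^k}$.

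First, I would observe that Theorem~\ref{thm:gen-BS-all-ranks} applies directly: every element of $\BSol_{\BZ^k}$ embeds in a simple group of type $F_\infty$. In particular, a simple group of type $F_\infty$ is finitely presented, so every Leary--Minasyan group embeds in a finitely presented simple group. This is precisely the Boone--Higman Conjecture for this class, since the finitely generated subgroups of finitely presented simple groups have solvable word problem by Kuznetsov's observation (and, trivially, Leary--Minasyan groups are finitely generated).

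For the ``In particular'' claim, I would recall the main geometric content of \cite{LeaMin-21}: Leary and Minasyan exhibit specific examples in their family that are $\mathrm{CAT}(0)$ but not virtually biautomatic. Since any such example is a Leary--Minasyan group, it falls under the first part of the corollary, and therefore satisfies the Boone--Higman Conjecture. Combining these two facts produces a $\mathrm{CAT}(0)$ group that is not virtually biautomatic yet satisfies the conjecture, which completes the proof.

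There is essentially no obstacle here: the corollary is a straightforward specialization of Theorem~\ref{thm:gen-BS-all-ranks} together with a citation of the $\mathrm{CAT}(0)$/non-biautomatic examples from \cite{LeaMin-21}. The only subtlety worth mentioning explicitly in the write-up is that one should confirm the defining graph-of-groups decomposition of the Leary--Minasyan groups really fits the hypotheses of $\BSol_{\BZ^k}$ (namely, finite graph, all vertex/edge groups $\cong \BZ^k$, finite-index inclusions), which is immediate from their construction as HNN-extensions of $\BZ^k$ along finite-index subgroups identified via an element of $\mathrm{GL}_k(\BQ)$.
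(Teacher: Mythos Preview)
Your proposal is correct and follows essentially the same approach as the paper: recognise that Leary--Minasyan groups are HNN-extensions of $\mathbb{Z}^k$ along finite-index subgroups, hence lie in $\BSol_{\mathbb{Z}^k}$, and invoke Theorem~\ref{thm:gen-BS-all-ranks}. The paper's proof is just a terser version of what you wrote.
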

    \begin{proof}
        By construction the Leary--Minasyan groups are HNN-extensions of $\mathbb{Z}^n$ that identify two finite index subgroups. Thus, the assertion follows from Theorem \ref{thm:gen-BS-all-ranks}.
    \end{proof}

    \subsection{Examples of higher nilpotency class}

    As we mentioned, it is well-known that many finitely generated nilpotent groups are strongly scale-invariant. By Theorem \ref{thm:scale-invariant} this provides large classes of torsion-free finitely generated groups $G$ of type $F_{\infty}$ for which all groups in $\BSol_G$ satisfy the Boone--Higman Conjecture. 
    
    To illustrate this we give a few more concrete examples to which we can thus apply Theorem \ref{thm:scale-invariant}. 
    
    Our first example is the integral Heisenberg group 
    $H_{2k+1}(\mathbb{Z})$ of dimension $2k+1$,
    for $k\geq 1$. For instance, for 
    $H_3(\mathbb{Z})=\left\langle x, y, z \mid [x,y]=z, [x,z]=[y,z]=1\right\rangle$ we can define an injective morphism $f$ that shows that it is strongly scale-invariant by $f(x)= x^2$, $f(y)=y^2$, $f(z)=z^4$.
    
    More generally, for the model filiform $(k-1)$-nilpotent group
    \[
        \Lambda_k= \left\langle x_1, x_2, \dots, x_k \mid  [x_1,x_i]=x_{i+1},~ 2\leq i \leq k-1, \mbox{ and } [x_j,x_l]=1 \mbox{ else}
        \right\rangle 
    \]
    of dimension $k$, we can choose $f(x_1)=x_1^2$ and $f(x_i)=x_i^{2^{i-1}}$, for $2\leq i \leq k$, to show that it is strongly scale-invariant.
    
    \subsection{Free-by-cyclic groups}
    From the preceding examples it may seem that allowing for edge and vertex groups that are commensurable to $G$ in the definition of $\BSol_G$ might solely serve the purpose of choosing the largest possible class of graphs of groups to which the conclusion of Theorem \ref{thm:main-BS} applies. However, we will now illustrate with the case when $G=\mathbb{F}_k$, $k\geq 2$, is $k$-generated non-abelian free that sometimes considering this generalised class is even necessary to be able to verify the assumptions of Theorem \ref{thm:main-BS}. 

    \begin{thm}\label{thm:free-groups}
        Let $G=\mathbb{F}_k$ for $k\geq 2$ be $k$-generated non-abelian free. Then every group in $\BSol_{\mathbb{F}_k}$ embeds in a simple group of type $F_{\infty}$. In particular, non-abelian finitely generated free by cyclic groups satisfy the Boone--Higman Conjecture.
    \end{thm}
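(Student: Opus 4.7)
The strategy is to apply Theorem~\ref{thm:main-BS}: it is enough to exhibit a single non-trivial group $H\in \BSol_{\mathbb{F}_k}$ that acts faithfully on its Bass--Serre tree. Since any two non-abelian finitely generated free groups are commensurable (all finite-index subgroups of $\mathbb{F}_k$ are free of finite rank $\geq 2$), one has $\BSol_{\mathbb{F}_k}=\BSol_{\mathbb{F}_2}$, and I may restrict to constructing $H$ out of copies of $\mathbb{F}_2$.

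The analogue of the argument used for $\mathbb{Z}^n$ or for nilpotent examples via Theorem~\ref{thm:scale-invariant} is unavailable here. Indeed, by the Nielsen--Schreier formula an index-$m$ subgroup of $\mathbb{F}_2$ has rank $m+1$, so any injective endomorphism of $\mathbb{F}_2$ with image of finite index must be surjective. Consequently $\mathbb{F}_2$ is not strongly scale-invariant, and the mapping-torus HNN extension $\mathbb{F}_2\rtimes_\phi \mathbb{Z}$ (with $\phi\in \mathrm{Aut}(\mathbb{F}_2)$) has both edge inclusions of index $1$; its Bass--Serre tree is a bi-infinite line on which $\mathbb{F}_2$ fixes every vertex and edge. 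Faithfulness must therefore be arranged from a graph-of-groups decomposition with \emph{proper} finite-index edge inclusions.

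My plan is to take $H=\langle \mathbb{F}_2, t\mid tAt^{-1}=B\rangle$, an HNN extension of $\mathbb{F}_2$ along an isomorphism $\phi\colon A\to B$ between two carefully chosen proper finite-index subgroups $A,B\leq \mathbb{F}_2$. The resulting Bass--Serre tree is regular of degree $[\mathbb{F}_2\colon A]+[\mathbb{F}_2\colon B]\geq 4$, hence locally finite and without leaves. To analyze the kernel $K$ of the $H$-action, I trace edge stabilizers along a $t$-axis: using the HNN normal form one shows that any $g\in K\leq \mathbb{F}_2$ must satisfy $t^n g t^{-n}\in \mathbb{F}_2$ for every $n\in \mathbb{Z}$ and in fact $t^n g t^{-n}\in A\cap B$. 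Equivalently, $g\in A\cap B$ and the $\phi$-orbit of $g$ (interpreting $\phi$ and $\phi^{-1}$ as partial maps $A\to B$ and $B\to A$) never escapes $A\cap B$. The construction will choose $A$, $B$, and $\phi$ so that no non-trivial $g\in A\cap B$ has its full two-sided $\phi$-orbit trapped in $A\cap B$, whence $K=\{1\}$.

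The hard part will be exactly this verification. Because $\mathbb{F}_2$ is infinite and residually finite, the normal core of any finite-index subgroup of $\mathbb{F}_2$ in $\mathbb{F}_2$ is itself a nontrivial finite-index normal subgroup; so triviality of $K$ cannot be read off from the vertex group alone and must be wrung out of the conjugation dynamics of the stable letter via the HNN normal form. Once such an $H$ is produced, Theorem~\ref{thm:main-BS} yields the embedding conclusion for every $K\in \BSol_{\mathbb{F}_k}$. The final assertion about non-abelian finitely generated free-by-cyclic groups is then immediate: any $\mathbb{F}_n\rtimes_\psi \mathbb{Z}$ with $n\geq 2$ and $\psi\in \mathrm{Aut}(\mathbb{F}_n)$ is the HNN extension of $\mathbb{F}_n$ with both edge inclusions of index $1$, so it lies in $\BSol_{\mathbb{F}_n}=\BSol_{\mathbb{F}_k}$.
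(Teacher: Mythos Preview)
Your high-level reduction is exactly the paper's: use $\BSol_{\mathbb{F}_k}=\BSol_{\mathbb{F}_2}$ and Theorem~\ref{thm:main-BS}, so that the whole proof comes down to exhibiting one non-trivial $H\in\BSol_{\mathbb{F}_2}$ acting faithfully on its Bass--Serre tree, after which the free-by-cyclic statement is immediate. Your diagnosis that Theorem~\ref{thm:scale-invariant} is unavailable here (Nielsen--Schreier forces any finite-index image of $\mathbb{F}_2$ in itself to be all of $\mathbb{F}_2$) is also correct.

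The gap is in the construction of $H$. You propose an HNN extension $\mathbb{F}_2\ast_\phi$ with proper finite-index $A,B\leq\mathbb{F}_2$ and $\phi\colon A\to B$, and you correctly describe the kernel of the tree action as the largest $\phi$-invariant subgroup of $A\cap B$ that is normal in $\mathbb{F}_2$; but you never specify $A,B,\phi$, and you do not verify triviality of this kernel for any choice. This is not a routine step: for instance, if $A=B$ is normal then $A$ itself is $\phi$-invariant and the action is never faithful, and for the double $\mathbb{F}_2\ast_A\mathbb{F}_2$ the kernel is the (finite-index, hence non-trivial) normal core of $A$. Producing an explicit $\phi$ whose iterated dynamics drive every non-trivial normal subgroup out of $A\cap B$ is genuinely delicate. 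The paper bypasses this entirely by invoking the Burger--Mozes groups (Theorem~\ref{thm:Burger-Mozes}): these are amalgams $\mathbb{F}_n\ast_H\mathbb{F}_n$ with $[\mathbb{F}_n:H]<\infty$ that are \emph{simple}, so the kernel of the tree action, being a proper normal subgroup, is forced to be trivial with no further analysis. If you want to salvage your HNN route, you would need either to cite a comparable source of faithful HNN examples (e.g.\ irreducible lattices in products of trees \`a la Wise or Rattaggi) or to give an explicit $(A,B,\phi)$ together with a verification; absent that, the argument is incomplete.
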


    To prove Theorem \ref{thm:free-groups} we need a group in $\BSol_{\mathbb{F}_k}=\BSol_{\mathbb{F}_2}$ that acts faithfully on its Bass--Serre tree. Examples of such groups are provided by the Burger--Mozes groups \cite{BurMoz-97, BurMoz-00}.

    \begin{thm}[{Burger--Mozes \cite[Theorem 5.5]{BurMoz-00}}]\label{thm:Burger-Mozes}
    There is an amalgamated product of the form $K=\mathbb{F}_n\ast_{H} \mathbb{F}_n$ with the following properties:
    \begin{enumerate}
        \item $K$ is simple, in particular $K$ acts faithfully on the corresponding Bass-Serre tree;
       
        \item $K$ is of type $F_{\infty}$;
       
        \item $[\mathbb{F}_n\colon H]<\infty$.

        \item \label{thm:amal-free-faithful-tree:conj} Denote the two inclusion maps $H\to \mathbb{F}_n$ by $\pi_1$ and $\pi_2$, then there is an element $\phi\in Aut(H)$ such that $\pi_2=\pi_1\circ \phi$.
    \end{enumerate}  
\end{thm}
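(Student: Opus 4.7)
The plan is to invoke the lattice construction of Burger and Mozes in a product of regular trees \cite{BurMoz-00}. Concretely, I would build $K$ as the fundamental group of a carefully chosen finite square complex $X$ (a VH-complex) whose universal cover is a product $T_d\times T_{d'}$ of regular trees, so that $K$ becomes a cocompact, torsion-free lattice in $\mathrm{Aut}(T_d)\times \mathrm{Aut}(T_{d'})$ acting freely on $T_d\times T_{d'}$.

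Once such a lattice has been constructed, properties (2) and (3) come essentially for free. Since $K$ acts freely and cocompactly on the CAT(0) cube complex $T_d\times T_{d'}$, the finite quotient $X$ is a compact $K(K,1)$ and therefore $K$ is of type $F_\infty$. The amalgamated product structure in (3) is obtained by applying Bass--Serre theory to the action on the first factor $T_d$: after arranging that $X$ has exactly one horizontal edge (i.e.\ the quotient of $T_d$ by $K$ is a single edge with distinct endpoints), one obtains $K=G_{v_1}\ast_{G_e}G_{v_2}$, where the vertex and edge stabilizers act freely and cocompactly on the second factor $T_{d'}$ and are therefore finitely generated free groups; a symmetric choice of $X$ makes both vertex groups isomorphic to a common $\mathbb{F}_n$ with $G_e\eqqcolon H$ of finite index in each.

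Property (4) is arranged by equipping $X$ with a $\mathbb{Z}/2$-symmetry that swaps the two horizontal vertices. Concretely, one chooses the labelled square complex so that there is an involution interchanging the two endpoints of the horizontal edge; this involution descends to an automorphism of $K$ that swaps the two copies of $\mathbb{F}_n$ and restricts to the required automorphism $\phi$ of $H$ intertwining the two edge inclusions $\pi_1$ and $\pi_2$.

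The main obstacle is property (1), the simplicity of $K$. Here I would invoke the Burger--Mozes \emph{normal subgroup theorem} for irreducible lattices in products of tree automorphism groups: if the local actions of $K$ on the links in each factor are prescribed to be sufficiently transitive (e.g.\ $2$-transitive, or more specifically satisfying their condition that the closure in each factor is non-discrete and locally quasi-primitive), then every non-trivial normal subgroup of $K$ has finite index. Passing from virtually simple to simple requires an additional explicit step in their construction, where by iterating the building blocks one produces a finite-index subgroup with trivial amenable radical and no proper finite-index subgroups. The combinatorial choice of the initial square complex realizing the prescribed local data, together with the verification of these transitivity and non-discreteness conditions, is the delicate heart of the argument; once simplicity is established, the faithfulness claim in (1) is immediate, since a simple group acts on the Bass--Serre tree either trivially or faithfully, and $K$ cannot act trivially because $H$ is a proper subgroup of $\mathbb{F}_n$.
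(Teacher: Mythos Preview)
The paper does not provide a proof of this statement at all: it is stated as a black-box citation of \cite[Theorem~5.5]{BurMoz-00} and then invoked in the proof of Theorem~\ref{thm:free-groups}. Your proposal therefore goes well beyond what the paper does, sketching the actual Burger--Mozes construction rather than merely citing it. As an outline of that construction your account is broadly accurate---the lattice arises as $\pi_1$ of a finite VH-square complex with universal cover a product of trees, the amalgam structure comes from Bass--Serre theory applied to one factor, finiteness properties come from the compact $K(\pi,1)$, and simplicity is the deep part requiring their normal subgroup theorem and local transitivity hypotheses. One point to be careful about is item~(4): the existence of an involution on the complex swapping the two vertex groups is not automatic from the general Burger--Mozes machinery and needs to be checked against the specific construction in \cite[Theorem~5.5]{BurMoz-00}; you assert it can be arranged but do not verify that the particular complexes they build have this symmetry.
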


    \begin{proof}[Proof of Theorem \ref{thm:free-groups}]
        Since all non-abelian finitely generated free groups are commensurable, we have $\BSol_{\mathbb{F}_k}=\BSol_{\mathbb{F}_2}$. By Theorem \ref{thm:main-BS} it thus suffices to give a single example of a group in $\BSol_{\mathbb{F}_2}$ that acts faithfully on its Bass--Serre tree. Such an example is provided by the Burger--Mozes groups, see Theorem \ref{thm:Burger-Mozes}. Thus, all groups in $\BSol_{\mathbb{F}_2}$ embed in a simple group of type $F_{\infty}$. Since non-abelian finitely generated free by cyclic groups are HNN extensions defined by an automorphism of a non-abelian finitely generated free group, they are contained in $\BSol_{\mathbb{F}_2}$. This completes the proof. 
    \end{proof}

    \begin{rem}\label{rem:burger-mozes}
        Our proof of Theorem \ref{thm:free-groups} shows that for every group in $\BSol_{\mathbb{F}_k}$  the permutational Boone--Higman Conjecture  holds, that is, every such group embeds in a finitely presented group that admits an action of type (A) on a countable set in the sense of Zaremsky \cite[Conjecture 5.1]{Zar-24}. In particular, this provides a positive answer to \cite[Question 5.13(ii)]{Zar-24} for the Burger--Mozes groups. 
    \end{rem}
   Squier proved in \cite{Squ87} that all the Euclidean triangle Artin groups also lie in $\BSol_{\mathbb{F}_2}$, hence we have the following:
   
\begin{cor}\label{cor-eu-triart}
    All Euclidean triangle Artin groups embed in finitely presented simple groups of type $F_\infty$. 
\end{cor}

   Combined with the other results of this section, this proves Theorem \ref{mainthm:Boone-Higman}.

\bibliographystyle{alpha}
\bibliography{references.bib}

\newcommand{\etalchar}[1]{$^{#1}$}
\begin{thebibliography}{BBMZ23b}

\bibitem[AAF18]{AuAyFa18}
Samuel Audino, Delaney~R. Aydel, and Daniel Farley.
\newblock Quasiautomorphism groups of type {$F_\infty$}.
\newblock {\em Algebr. Geom. Topol.}, 18(4):2339--2369, 2018.

\bibitem[ABF{\etalchar{+}}21]{AraBuxFlePetWu-21}
Javier Aramayona, Kai-Uwe Bux, Jonas Flechsig, Nansen Petrosyan, and Xiaolei
  Wu.
\newblock Asymptotic mapping class groups of {C}antor manifolds and their
  finiteness properties.
\newblock {\em arXiv preprint arXiv:2110.05318}, 2021.

\bibitem[BB97]{BesBra-97}
M.~Bestvina and N.~Brady.
\newblock Morse theory and finiteness properties of groups.
\newblock {\em Invent. Math.}, 129(3):445--470, 1997.

\bibitem[BBMZ23a]{BBMZ-hyperbolic-23}
James Belk, Collin Bleak, Francesco Matucci, and Matthew~CB Zaremsky.
\newblock Hyperbolic groups satisfy the {Boone-Higman} conjecture.
\newblock {\em arXiv preprint arXiv:2309.06224}, 2023.

\bibitem[BBMZ23b]{BBMZ-survey-23}
James Belk, Collin Bleak, Francesco Matucci, and Matthew~CB Zaremsky.
\newblock Progress around the {B}oone-{H}igman conjecture.
\newblock {\em arXiv preprint arXiv:2306.16356}, 2023.

\bibitem[BH74]{BoHi74}
William~W. Boone and Graham Higman.
\newblock An algebraic characterization of groups with soluble word problem.
\newblock {\em J. Austral. Math. Soc.}, 18:41--53, 1974.

\bibitem[BM97]{BurMoz-97}
Marc Burger and Shahar Mozes.
\newblock Finitely presented simple groups and products of trees.
\newblock {\em C. R. Acad. Sci. Paris S\'{e}r. I Math.}, 324(7):747--752, 1997.

\bibitem[BM00]{BurMoz-00}
Marc Burger and Shahar Mozes.
\newblock Lattices in product of trees.
\newblock {\em Inst. Hautes \'{E}tudes Sci. Publ. Math.}, (92):151--194, 2000.

\bibitem[Bro87]{Bro-87}
Kenneth~S. Brown.
\newblock Finiteness properties of groups.
\newblock In {\em Proceedings of the {N}orthwestern conference on cohomology of
  groups ({E}vanston, {I}ll., 1985)}, volume~44, pages 45--75, 1987.

\bibitem[But22]{But-22}
Jack~O Button.
\newblock Generalised {B}aumslag-{S}olitar groups and hierarchically hyperbolic
  groups.
\newblock {\em arXiv preprint arXiv:2208.12688}, 2022.

\bibitem[BZ22]{BelZar-20}
James Belk and Matthew C.~B. Zaremsky.
\newblock Twisted {B}rin-{T}hompson groups.
\newblock {\em Geom. Topol.}, 26(3):1189--1223, 2022.

\bibitem[Der22]{Der-22}
Jonas Der\'{e}.
\newblock Strongly scale-invariant virtually polycyclic groups.
\newblock {\em Groups Geom. Dyn.}, 16(3):985--1004, 2022.

\bibitem[FFSS11]{FigFigSchSch-11}
Diego Figueira, Santiago Figueira, Sylvain Schmitz, and Philippe Schnoebelen.
\newblock Ackermannian and primitive-recursive bounds with {D}ickson's lemma.
\newblock In {\em 26th {A}nnual {IEEE} {S}ymposium on {L}ogic in {C}omputer
  {S}cience---{LICS} 2011}, pages 269--278. IEEE Computer Soc., Los Alamitos,
  CA, 2011.

\bibitem[For03]{For-03}
Max Forester.
\newblock On uniqueness of {JSJ} decompositions of finitely generated groups.
\newblock {\em Comment. Math. Helv.}, 78(4):740--751, 2003.

\bibitem[GLU22]{GenLonUre-22}
Anthony Genevois, Anne Lonjou, and Christian Urech.
\newblock Asymptotically rigid mapping class groups, {I}: {F}initeness
  properties of braided {T}hompson's and {H}oughton's groups.
\newblock {\em Geom. Topol.}, 26(3):1385--1434, 2022.

\bibitem[GPS99]{GTS99}
Thierry Giordano, Ian~F. Putnam, and Christian~F. Skau.
\newblock Full groups of {C}antor minimal systems.
\newblock {\em Israel J. Math.}, 111:285--320, 1999.

\bibitem[Hig61]{Hig-61}
G.~Higman.
\newblock Subgroups of finitely presented groups.
\newblock {\em Proc. Roy. Soc. London Ser. A}, 262:455--475, 1961.

\bibitem[K{\"o}n27]{Koe-27}
D{\'e}nes K{\"o}nig.
\newblock {\"U}ber eine schlussweise aus dem endlichen ins unendliche.
\newblock {\em Acta Sci. Math.(Szeged)}, 3(2-3):121--130, 1927.

\bibitem[Kuz58]{Kuznetsov58}
Alexander Kuznetsov.
\newblock Algorithms as operations in algebraic systems.
\newblock In {\em Proceedings of the All-Union Colloquium on Algebra Article 2
  from the morning session of 6th February 1958}, volume 13.3, pages 240--241,
  1958.

\bibitem[Leh08]{Lehnert2008}
J{\"o}rg Lehnert.
\newblock {\em Gruppen von quasi-Automorphismen}.
\newblock doctoralthesis, Universit{\"a}tsbibliothek Johann Christian
  Senckenberg, 2008.

\bibitem[LM21]{LeaMin-21}
Ian~J. Leary and Ashot Minasyan.
\newblock Commensurating {HNN} extensions: nonpositive curvature and
  biautomaticity.
\newblock {\em Geom. Topol.}, 25(4):1819--1860, 2021.

\bibitem[Mat12]{Mat12}
Hiroki Matui.
\newblock Homology and topological full groups of \'{e}tale groupoids on
  totally disconnected spaces.
\newblock {\em Proc. Lond. Math. Soc. (3)}, 104(1):27--56, 2012.

\bibitem[NP11]{NekPet-11}
Volodymyr Nekrashevych and G\'{a}bor Pete.
\newblock Scale-invariant groups.
\newblock {\em Groups Geom. Dyn.}, 5(1):139--167, 2011.

\bibitem[NSJG18]{NucStJG-18}
Brita E.~A. Nucinkis and Simon St. John-Green.
\newblock Quasi-automorphisms of the infinite rooted 2-edge-coloured binary
  tree.
\newblock {\em Groups Geom. Dyn.}, 12(2):529--570, 2018.

\bibitem[Qui78]{Qui-78}
Daniel Quillen.
\newblock Homotopy properties of the poset of nontrivial p-subgroups of a
  group.
\newblock {\em Advances in Mathematics}, 28(2):101--128, 1978.

\bibitem[Ser03]{Ser-03}
Jean-Pierre Serre.
\newblock {\em Trees}.
\newblock Springer Monographs in Mathematics. Springer-Verlag, Berlin, 2003.
\newblock Translated from the French original by John Stillwell, Corrected 2nd
  printing of the 1980 English translation.

\bibitem[Squ87]{Squ87}
Craig~C. Squier.
\newblock On certain {$3$}-generator {A}rtin groups.
\newblock {\em Trans. Amer. Math. Soc.}, 302(1):117--124, 1987.

\bibitem[SW79]{ScottWall-79}
Peter Scott and Terry Wall.
\newblock Topological methods in group theory.
\newblock In {\em Homological group theory ({P}roc. {S}ympos., {D}urham,
  1977)}, volume~36 of {\em London Math. Soc. Lecture Note Ser.}, pages
  137--203. Cambridge Univ. Press, Cambridge-New York, 1979.

\bibitem[Why01]{Whyte01}
K.~Whyte.
\newblock The large scale geometry of the higher {B}aumslag-{S}olitar groups.
\newblock {\em Geom. Funct. Anal.}, 11(6):1327--1343, 2001.

\bibitem[Zar17]{Zar17}
Matthew C.~B. Zaremsky.
\newblock On the {$\Sigma$}-invariants of generalized {T}hompson groups and
  {H}oughton groups.
\newblock {\em Int. Math. Res. Not. IMRN}, (19):5861--5896, 2017.

\bibitem[Zar24a]{Zar-24}
Matthew C.~B. Zaremsky.
\newblock Finite presentability of twisted brin-thompson groups.
\newblock {\em arXiv preprint arXiv:2405.18354}, 2024.

\bibitem[Zar24b]{Zar-24a}
Matthew~C.B. Zaremsky.
\newblock Embedding finitely presented self-similar groups into finitely
  presented simple groups.
\newblock {\em arXiv preprint arXiv:2405.09722}, 2024.

\end{thebibliography}

\end{document}